\documentclass[11pt]{amsart}

\voffset=5mm
\oddsidemargin=17pt \evensidemargin=17pt
\headheight=9pt     \topmargin=26pt
\textheight=624pt   \textwidth=433.8pt

\usepackage{amsmath}
\usepackage{amssymb}
\usepackage{epsfig}
\usepackage{amsfonts}

\newcommand{\excise}[1]{}

\newtheorem{thm}{Theorem}[section]
\newtheorem{lemma}[thm]{Lemma}

\newtheorem{cor}[thm]{Corollary}
\newtheorem{prop}[thm]{Proposition}
\newtheorem{conj}[thm]{Conjecture}

\newtheorem{op}[thm]{Open Problem}

\newtheorem{ex}[thm]{Example}
\newtheorem{rem}[thm]{Remark}
\newtheorem{Warn}[thm]{Caution}

    {\end{Example}}
    {\end{Remark}}





\def\wh{\widehat}

\def\emp{\nothing}

\def\la{\lambda}
\def\ga{\gamma}
\def\si{\sigma}

\def\al{\alpha}

\def\om{\omega}

\def\ve{\varepsilon}

\def\ssu{\subset}

\def\<{\langle}
\def\>{\rangle}

\def\Ups{\Phi}

\def\0{{\mathbf 0}}

\def\nothing{\varnothing}

\def\.{\hskip.06cm}
\def\ts{\hskip.03cm}

\def\ba{\textbf{\textit{a}}}

%



\def\RH{{\rm RH}}

\def\ds{{\text{\rm \ts d}}}
\def\sign{{\rm sign}}
\def\pd{{\pi'\hskip-.06cm}}
\def\vv{{\rm c}}
\def\mon{{\ga}}

\def\irredsym#1{\chi^{#1}}
\def\irredalt#1{\alpha^{#1}}
\def\epsi#1{\varepsilon_{#1}}

\def\res#1#2#3{{\sf Res}^{#1}_{#2}({#3})}

\def\la{\lambda}

\def\im{{\text{\rm Im}}}


\begin{document}
\title[Tensor square conjectures]{Kronecker products, characters,
partitions, \\ and the tensor square conjectures}

\author[Igor~Pak]{ \ Igor~Pak$^\star$}

\author[Greta~Panova]{ \ Greta~Panova$^\star$}

\author[Ernesto~Vallejo]{ \ Ernesto Vallejo$^\dagger$}

\date{\today}

\thanks{\thinspace ${\hspace{-.45ex}}^\star$Department of Mathematics, UCLA, Los Angeles, CA 90095, \ts
\texttt{\{pak,panova\}@math.ucla.edu}}

\thanks{\thinspace ${\hspace{-.45ex}}^\dagger$Universidad Nacional Aut\'onoma de M\'exico,
Centro de Ciencias Matem\'aticas, Apartado Postal 61-3 \\ {\rm  \  \hskip.5cm Xangari}, 58089 Morelia, Mich., Mexico, \ts
\texttt{vallejo@matmor.unam.mx}}

\begin{abstract}
We study the remarkable \emph{Saxl conjecture} which states that tensor squares of
certain irreducible representations of the symmetric groups~$S_n$ contain all
irreducibles as their constituents.  Our main result is that they contain
representations corresponding to hooks and two row Young diagrams.
For that, we develop a new sufficient
condition for the \emph{positivity of Kronecker coefficients} in terms of characters,
and use combinatorics of rim hook tableaux combined with known results
on unimodality of certain partition functions.  We also present connections
and speculations on random characters of~$S_n$.
\end{abstract}

\maketitle

\section{Introduction and main results}\label{s:intro}

\noindent
Different fields have different goals and different open problems.
Most of the time, fields peacefully coexist enriching each other
and the rest of mathematics.  But occasionally, a conjecture from
one field arises to present a difficult challenge in another, thus
exposing its technical strengths and weaknesses.  The story of
this paper is our effort in the face of one such challenge.

\smallskip

Motivated by \emph{John Thompson's conjecture} and \emph{Passman's problem}
(see~$\S$\ref{ss:fin-back}),
Heide, Saxl, Tiep and Zalesski recently proved that with a
few known exceptions, every irreducible character of a simple
group of Lie type is a constituent of the tensor square of the
Steinberg character~\cite{HSTZ}.  They conjecture that for every $n\ge 5$,
there is an irreducible character $\chi$ of~$A_n$ whose tensor
square $\chi \otimes \chi$ contains every irreducible character as a
constituent.\footnote{Authors of~\cite{HSTZ} report that this
conjecture was checked by Eamonn O'Brien for $n \le 17$.}
Here is the symmetric group analogue of this conjecture:

\begin{conj}[Tensor square conjecture]
For every $n\ge 3$, $n\neq 4, 9$, there is a partition $\mu \vdash n$, such that
tensor square of the irreducible character \. $\chi^\mu$ of~$S_n$
contains every irreducible character as a constituent.
\end{conj}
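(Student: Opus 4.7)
My strategy is to establish the conjecture by choosing $\mu$ to be a (near-)staircase and then to prove $g(\mu,\mu,\nu) > 0$ for every $\nu \vdash n$, splitting the argument according to the shape of $\nu$. For $n = \binom{k+1}{2}$ I take $\mu = \rho_k := (k,k{-}1,\dots,1)$ as in Saxl's original conjecture; for $n$ not of this form I use a near-staircase obtained by adding or removing a single cell along the boundary of the largest triangle of size $\le n$.

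\textbf{Step 1 (character formula and symmetries).} The starting point is
\[
g(\mu,\mu,\nu) \, = \, \sum_{\la \vdash n} \frac{\chi^\mu(\la)^2 \. \chi^\nu(\la)}{z_\la}\,,
\]
in which each factor $\chi^\mu(\la)^2$ is nonnegative. Since $\rho_k$ is self-conjugate, $\chi^\mu(\la) = 0$ whenever the associated permutation is odd, which already cuts the sum in half. I would then decompose the remaining sum into (a) the identity class $\la=(1^n)$, contributing the strictly positive main term $(f^\mu)^2 f^\nu/n!$; (b) a \emph{bulk} region of $\la$ with many short cycles; and (c) a \emph{boundary} region of $\la$ with few, long cycles, on which the Murnaghan--Nakayama rule applied to $\rho_k$ is very restrictive (a staircase admits very few rim-hook tableaux of a given content, and essentially none whenever some cycle length exceeds $2k-1$).

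\textbf{Step 2 (case analysis on $\nu$).} To show that the identity contribution dominates I would partition the set $\{\nu \vdash n\}$ into three classes: (i) hooks, two-row shapes, and their conjugates, which are already covered by the main theorem of this paper; (ii) $\nu$ whose Young diagram has short first row \emph{and} short first column, treated by a direct Murnaghan--Nakayama argument on $\rho_k$ combined with conjugation symmetry $\chi^\nu(\la) = \sign(\la)\chi^{\nu^T}(\la)$; and (iii) \emph{generic} $\nu$ in the bulk, for which uniform character-ratio bounds of Fomin--Lulov, Roichman, and Larsen--Shalev give $|\chi^\nu(\la)/f^\nu| \le \theta(\la)$ with $\theta(\la) \to 0$ rapidly on the bulk region of Step 1(b), so those terms sum to $o((f^\mu)^2 f^\nu/n!)$. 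The remaining contribution from the boundary region is then estimated using the strong vanishing of $\chi^{\rho_k}$ established in Step 1(c).

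\textbf{Step 3 (non-triangular $n$).} For $n \neq \binom{k+1}{2}$ I would bootstrap from the triangular case by a cell-addition argument: if $\mu' = \mu \cup \{c\}$ for a cell $c$ on the rim of $\rho_k$, then the branching rule for $S_n \downarrow S_{n-1}$ together with the semigroup property of Kronecker coefficients (Christandl--Harrow--Mitchison, or the classical Murnaghan version) reduces positivity of $g(\mu',\mu',\nu')$ to positivity of $g(\mu,\mu,\nu)$ for some $\nu \subset \nu'$, apart from a short list of exceptional $\nu'$ that must be confirmed by direct character computation.

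The main obstacle is Step 2(iii): while character-ratio bounds control the bulk in aggregate, the values $\chi^{\rho_k}(\la)$ on \emph{boundary} conjugacy classes are not themselves well understood, and generic bounds are too crude to rule out catastrophic cancellations in the character sum of Step 1. A finer combinatorial grip on rim-hook tableaux of the staircase, beyond the unimodality inputs that drive the hook/two-row results of this paper, seems indispensable; this is precisely the combinatorial ingredient whose partial development underlies the main theorem of this paper, and in full strength it would presumably close the gap.
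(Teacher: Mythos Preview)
The statement you are attempting to prove is stated in the paper as a \emph{conjecture}; the paper contains no proof of it, nor does it claim one. All of the paper's results (the Main Lemma, Theorem~\ref{t:summary}, the near-hook and near-two-row theorems) are partial evidence toward the Saxl conjecture, which is itself a special case of the tensor square conjecture. So there is no ``paper's own proof'' to compare against.

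Your proposal is accordingly not a proof but a strategy sketch, and you yourself flag the decisive gap in Step~2(iii): the character-ratio bounds of Fomin--Lulov, Roichman, and Larsen--Shalev are not sharp enough to control the boundary classes, and no known combinatorial handle on rim-hook tableaux of the staircase fills this in. That is an honest assessment, but it means the argument does not close. A few further issues: in Step~2(i) you invoke ``the main theorem of this paper'' for hooks and two-row shapes, but Theorem~\ref{t:summary} only covers these for $k$ sufficiently large, not for all~$k$, so even the base cases of your scheme are incomplete. In Step~3 the appeal to the semigroup property is not correct as stated: the Christandl--Harrow--Mitchison/Murnaghan semigroup says $g(\la+\la',\mu+\mu',\nu+\nu')>0$ whenever both summands are positive, which goes in the wrong direction for deducing positivity at a \emph{smaller} or \emph{shifted} triple from positivity at $\rho_k$; branching plus semigroup does not yield the reduction you describe without substantial additional argument. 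Finally, for a near-staircase $\mu$ one loses $\mu=\mu'$, so the vanishing on odd classes in Step~1 and the $\la\leftrightarrow\la'$ symmetry of Proposition~\ref{p:sign} both fail, undermining the parallel you draw with the triangular case.

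In short: the outline is a plausible heuristic for why the conjecture should be true, but it is not a proof, and the paper makes no claim to have one.
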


The \emph{Kronecker product problem} is a problem of computing
multiplicities
$$g(\la,\mu,\nu)= \langle \chi^\la, \chi^\mu\otimes\chi^\nu\rangle$$
of an irreducible character of $S_n$ in the tensor product of two others.
It is often referred as ``classic'', and ``one of the last major open problems''
in algebraic combinatorics~\cite{BWZ,Reg}.
Part of the problem is its imprecise statement: we are talking about
finding an explicit combinatorial interpretation here rather than
computational complexity (see Subsection~\ref{ss:fin-kron}).

Despite a large body of work on the Kronecker coefficients, both
classical and very recent (see e.g.~\cite{BO,Bla,BOR,Ike,Reg,Rem,RW,Val1,Val2}
and references therein), it is universally agreed that ``frustratingly little
is known about them''~\cite{Bur}.  Unfortunately, most results are limited
to partitions of very specific shape (hooks, two rows, etc.), and
the available tools are much too weak to resolve the tensor square
conjecture.

\smallskip

During a talk at UCLA, Jan Saxl made the following conjecture,
somewhat refining the tensor square conjecture.\footnote{UCLA
Combinatorics Seminar, Los Angeles, March 20, 2012.}

\begin{conj}[Saxl conjecture] \label{conj:saxl}
Denote by $\rho_k=(k,k-1,\ldots,2,1)\vdash n$, where $n=\binom{k+1}{2}$.
Then for every $k\ge 1$, the tensor square \. $\chi^{\rho_k}\otimes\chi^{\rho_k}$ \ts
contains every irreducible character of~$S_n$ as a constituent.
\end{conj}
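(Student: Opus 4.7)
The plan is to exploit the character-theoretic identity
\begin{equation*}
g(\nu,\rho_k,\rho_k) \,=\, \sum_{\tau\,\vdash\, n}\, \frac{1}{z_\tau}\,\chi^\nu(\tau)\,\chi^{\rho_k}(\tau)^2,
\end{equation*}
which writes the Kronecker multiplicity as a signed sum over cycle types weighted by the nonnegative quantities $\chi^{\rho_k}(\tau)^2/z_\tau$. The entire problem is to show strict positivity of this sum for every $\nu\vdash n=\binom{k+1}{2}$.

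My first step is to set up a \emph{character-based sufficient condition} for positivity: if there exists a cycle type $\tau^\ast$ with
\begin{equation*}
\chi^\nu(\tau^\ast)\,\chi^{\rho_k}(\tau^\ast)^2 \,>\, \sum_{\tau\neq \tau^\ast} \frac{z_{\tau^\ast}}{z_\tau}\,\bigl|\chi^\nu(\tau)\bigr|\,\chi^{\rho_k}(\tau)^2,
\end{equation*}
then $g(\nu,\rho_k,\rho_k)>0$. I would then search, for each target $\nu$, for a dominating class $\tau^\ast=\tau^\ast(\nu)$ matched to the staircase. Rim-hook tableaux of shape $\rho_k$ provide a natural pool of candidates: cycle types consisting of a few long cycles force the Murnaghan--Nakayama expansion of $\chi^{\rho_k}(\tau^\ast)$ to collapse to something explicit, and the smallness of $1/z_{\tau^\ast}$ for such coarse types can be leveraged so that the corresponding weighted term dominates.

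For the shapes $\nu$ that admit classical character formulas—namely hooks $\nu=(n-k,1^k)$ and two-row shapes $\nu=(n-k,k)$—I would carry out this strategy concretely. Hook characters are given by $q$-analogs of binomial coefficients and two-row characters by signed lattice path counts, so $\chi^\nu(\tau^\ast)$ becomes a combinatorially tractable quantity on the candidate classes. Matching these against rim-hook enumerations of $\rho_k$ and invoking unimodality of partition-generating polynomials (e.g.\ Gaussian polynomials via Sylvester's theorem) should allow one to certify the domination inequality. This is precisely the route signalled by ``rim hook tableaux combined with known results on unimodality of certain partition functions'' in the abstract.

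The hard part, and the reason I would expect to fall short of the full conjecture, is the absence of a \emph{uniform} witness $\tau^\ast$ for arbitrary $\nu$. As $\nu$ ranges over all partitions of $n$, the sign of $\chi^\nu(\tau^\ast)$ fluctuates erratically, and the character $\chi^{\rho_k}$ is spread over enough classes that no single class $\tau^\ast$ dominates for every $\nu$. A full proof along these lines would demand a structural, case-by-case recipe $\nu\mapsto \tau^\ast(\nu)$ together with uniform control of the tail sum $\sum_{\tau\neq\tau^\ast}$, and no such recipe is visible from the current combinatorial understanding of the $S_n$ character table. This is why the plan delivers only the hook and two-row shapes and why the full Saxl conjecture remains out of reach.
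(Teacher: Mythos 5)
First, a framing point: the statement you are addressing is the Saxl conjecture itself, which the paper does \emph{not} prove (nor does anyone); the paper only establishes the partial result (Theorem~\ref{t:summary}) that hooks and two-row shapes occur in $\chi^{\rho_k}\otimes\chi^{\rho_k}$ for $k$ large, via Lemma~\ref{t:main}. So the only meaningful comparison is between your route to those partial results and the paper's, and here there is a genuine gap: your sufficient condition for positivity is quantitatively unusable. You ask for a class $\tau^\ast$ whose single weighted term dominates, in absolute value, the sum of all the others in $g(\nu,\rho_k,\rho_k)=\sum_\tau z_\tau^{-1}\chi^\nu(\tau)\chi^{\rho_k}(\tau)^2$. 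But the tail you must beat contains, in particular, the identity class, contributing $(z_{\tau^\ast}/n!)\ts f^\nu (f^{\rho_k})^2$ where $f^\la=\chi^\la(1^n)$. Since $f^{\rho_k}=\exp\bigl[\tfrac12 n\log n-O(n)\bigr]$ (see $\S$\ref{ss:fin-dim}), one has $(f^{\rho_k})^2/n!=e^{-cn}$ with a small constant $c$, while for the shapes you claim to capture, e.g.\ $\nu=(n-\ell,1^\ell)$ or $(n-\ell,\ell)$ with $\ell$ of order $n/2$, $f^\nu$ grows like $2^{n(1-o(1))}$; hence this single tail term is already $e^{\Omega(n)}$. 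On the other hand, for the coarse classes $\tau^\ast$ (few long cycles) on which Murnaghan--Nakayama ``collapses'', $\chi^{\rho_k}(\tau^\ast)^2$ and $|\chi^\nu(\tau^\ast)|$ are bounded by squares of rim-hook tableau counts, i.e.\ by $\exp O(\sqrt{n}\log n)$, so your left-hand side can never reach the tail. Choosing $\tau^\ast$ fine (many short cycles) instead puts you exactly where you concede you have no control: oscillating signs of $\chi^\nu$ and no closed form for $\chi^{\rho_k}$. So the domination inequality fails precisely for the hooks and two-row shapes you hope to deliver; unimodality of Gaussian polynomials cannot repair a magnitude comparison that is wrong by an exponential factor.

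The paper's mechanism is different in kind and avoids all magnitude estimates. Its Main Lemma (Lemma~\ref{t:main}) upgrades mere \emph{nonvanishing} at one special class to positivity: if $\mu=\mu'$ and $\chi^\la[\wh\mu]\neq 0$, where $\wh\mu$ is the partition of principal hook lengths, then $\la\in\Ups(\mu)$. The proof is exact representation theory of $A_n$: $\wh\mu$ has distinct odd parts, so its $S_n$-class splits into two $A_n$-classes, and the only irreducible $A_n$-characters taking different values on the two halves are $\al^{\mu\pm}$; a nonzero $\chi^\la[\wh\mu]$ then forces $\al^{\mu\pm}$ to occur in $\al^{\mu+}\otimes\al^{\la}$, whence $\chi^\la$ or $\chi^{\la'}$ occurs in $\chi^\mu\otimes\chi^\mu$, and Proposition~\ref{p:sign} gives both. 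Only after this reduction do rim hooks and unimodality enter, and only to verify nonvanishing: for hooks and two-row shapes, $\chi^\la[\wh\rho_k]$ becomes an alternating combination of counts $\pd_R(\ell)$ of partitions into distinct parts of an arithmetic progression, which is nonzero in the bulk range by the Odlyzko--Richmond Theorem~\ref{t:pdp-fin}, the small-$\ell$ range being covered by Theorems~\ref{t:BO} and~\ref{t:Bla}. If you want to rescue your plan, replace your domination criterion with this nonvanishing criterion; as written, your condition is strictly stronger than $g(\nu,\rho_k,\rho_k)>0$ and essentially never verifiable, and in any case neither approach proves the full conjecture.
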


Andrew Soffer checked the validity of conjecture for $k\le 8$.\footnote{Personal
communication.} While we believe the conjecture, we also realize that it
is beyond the reach of current technology.  In Section~\ref{s:known},
we briefly survey the implications of known tools towards the tensor
product and Saxl conjectures.  More importantly, we then develop a new tool,
aimed specifically at the Saxl conjecture:

\begin{lemma}[Main Lemma]\label{t:main}
Let $\mu =\mu'$ be a self-conjugate partition of~$n$, and let
$\nu=(2\ts\mu_1-1, 2\ts\mu_2-3, 2\ts\mu_3-5, \ldots)\vdash n$ be the partition
whose parts are lengths of the principal hooks of $\mu$.
Suppose $\chi^\la[\nu] \ne 0$ for some $\la\vdash n$.
Then $\chi^\la$ is a constituent of \. $\chi^{\mu}\otimes\chi^{\mu}$.
\end{lemma}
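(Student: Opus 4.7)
The plan is to attack the lemma through the Frobenius character sum for Kronecker coefficients, isolating the contribution of the cycle type $\nu$ and exploiting the rigidity that self-conjugacy of $\mu$ forces on $\chi^\mu[\nu]$. The starting identity is
$$g(\la,\mu,\mu) \ = \ \sum_{\pi\vdash n}\frac{\chi^\la[\pi]\,\chi^\mu[\pi]^2}{z_\pi},$$
and the central observation is that $\chi^\mu[\nu]=\pm 1$. To verify this I would iterate Murnaghan--Nakayama: by a direct hook-length calculation the unique cell of the self-conjugate diagram $\mu$ with hook length $2\mu_1-1$ is the corner $(1,1)$, and the associated rim hook is the entire outer rim. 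Removing it leaves a self-conjugate subdiagram whose principal hook lengths are precisely $(\nu_2,\nu_3,\ldots)$, so the induction runs and $|\chi^\mu[\nu]|=1$ emerges as a single term in the Murnaghan--Nakayama expansion.

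Given that $\chi^\mu[\nu]^2=1$, the $\pi=\nu$ summand of the Frobenius formula contributes $\chi^\la[\nu]/z_\nu$ to $g(\la,\mu,\mu)$. The route I would take next is to prove the strengthening
$$g(\la,\mu,\mu) \ \geq \ \frac{|\chi^\la[\nu]|}{z_\nu}.$$
Because $g(\la,\mu,\mu)\in\zz_{\geq 0}$, this inequality immediately forces $g(\la,\mu,\mu)\geq 1$ under the hypothesis $\chi^\la[\nu]\neq 0$, which is exactly the conclusion of the lemma.

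Rewriting, the desired inequality is equivalent to showing that the class function
$$\Phi \ := \ \chi^\mu\otimes\chi^\mu \ - \ \mathbb{1}_{C_\nu},$$
where $\mathbb{1}_{C_\nu}$ is the indicator of the conjugacy class of cycle type $\nu$, has nonnegative coefficients in every irreducible $\chi^\la$. Pointwise $\Phi\geq 0$ (on $C_\nu$ we have $\chi^\mu(\sigma)^2=1=\mathbb{1}_{C_\nu}(\sigma)$, while elsewhere $\chi^\mu(\sigma)^2\geq 0=\mathbb{1}_{C_\nu}(\sigma)$), but pointwise positivity does not automatically give nonnegative irreducible multiplicities. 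The input I would try to extract from the combinatorics of rim-hook tableaux is a sign-matching: for each cycle type $\pi\neq\nu$ with $\chi^\mu[\pi]\neq 0$, pair the pairs of Murnaghan--Nakayama tableaux of shape $\mu$ and content $\pi$ so that their contribution to $\langle \chi^\mu\otimes\chi^\mu,\chi^\la\rangle$ cancels against a portion of the $\pi=\nu$ term or supplies a nonnegative residue, the residues summing to $\geq|\chi^\la[\nu]|/z_\nu$.

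The principal obstacle, and the place where I expect the proof to require genuine new input, is precisely this control of the $\pi\neq\nu$ contributions: without an additional combinatorial or representation-theoretic identity tying the signed Murnaghan--Nakayama sums at arbitrary $\pi$ to the evaluation at $\nu$, the signed cancellation in the Frobenius sum cannot be resolved. I would look for the required identity by inducting on the number of principal hooks of $\mu$, using that the outer-rim removal reduces $\mu$ to a smaller self-conjugate shape whose principal hook lengths are the tail of $\nu$, and hoping that the $\pi=\nu$ contribution inherits its positivity from the inductive case applied to all restrictions of $\chi^\la$ compatible with this rim removal.
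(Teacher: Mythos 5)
Your opening steps are sound: $\chi^\mu[\wh\mu]=\pm1$ is exactly the paper's lemma on principal hooks (the unique rim hook tableau of shape $\mu$ and weight $\wh\mu$), and isolating the $\pi=\nu$ term of the Frobenius sum $g(\la,\mu,\mu)=\sum_{\pi}\chi^\la[\pi]\,\chi^\mu[\pi]^2/z_\pi$ is legitimate. But from that point the argument has a genuine gap, which you yourself flag: everything hinges on the unproven inequality $g(\la,\mu,\mu)\ge |\chi^\la[\nu]|/z_\nu$, i.e. on showing that the signed contributions from all $\pi\ne\nu$ cannot cancel the $\pi=\nu$ term. No mechanism for this is supplied; the proposed sign-matching of pairs of Murnaghan--Nakayama tableaux and the induction on principal hooks are stated as hopes rather than arguments, and no identity is given that ties the signed rim-hook sums at arbitrary $\pi$ to the evaluation at $\nu$. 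A smaller flaw: your reformulation via subtracting the indicator function of the class $C_\nu$ is equivalent to the desired inequality only when $\chi^\la[\nu]>0$; for $\chi^\la[\nu]<0$ you would have to add the indicator instead. Note also that the inequality you aim for is true, but (using column orthogonality, $\chi^\la[\nu]^2\le z_\nu$) it is essentially a corollary of the lemma itself, so proving it by direct control of the Frobenius sum re-poses the problem rather than solving it.

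The paper's proof avoids the cancellation issue entirely by descending to the alternating group. Since $\wh\mu$ has distinct odd parts, the class $C_{\wh\mu}$ splits into two $A_n$-classes $\wh\mu^1,\wh\mu^2$, and since $\mu=\mu'$ the restriction of $\chi^\mu$ splits as $\al^{\mu+}+\al^{\mu-}$; crucially, $\al^{\mu\pm}$ are the \emph{only} irreducible $A_n$-characters taking different values on $\wh\mu^1$ and $\wh\mu^2$, and their difference there is $\pm\sqrt{\ve_\mu\prod_i\wh\mu_i}\ne 0$. The hypothesis $\chi^\la[\wh\mu]\ne0$ then forces $\al^{\mu+}\otimes\al^{\la}$ (or $\al^{\mu+}\otimes\al^{\la\pm}$ when $\la=\la'$) to take different values on the two split classes, hence to contain $\al^{\mu+}$ or $\al^{\mu-}$ as a constituent; moving factors across the inner product (with the two cases $\ve_\mu=\pm1$, using that $\overline{\al^{\mu+}}=\al^{\mu-}$ in the non-real case) shows $\al^{\la}$ occurs in $\res{S_n}{A_n}{\chi^\mu\otimes\chi^\mu}$, and one concludes with $g(\la,\mu,\mu)=g(\la',\mu,\mu)$ for self-conjugate $\mu$, the case $\la=\mu$ being supplied by the Bessenrodt--Behns theorem. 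This passage to $A_n$ and its split classes/characters is precisely the missing input in your sketch; it is representation-theoretic leverage, not a refinement of the Frobenius sum.
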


Curiously, the proof uses representation theory of~$A_n$ and is based on
the idea of the proof of~\cite[Thm~3.1]{BB} (see Section~\ref{s:main-proof}).
We use this theorem to obtain the following technical results (among others).

\begin{thm}\label{t:summary}
There is a universal constant~$L$, such that for every $k\ge L$, the tensor square
\. $\chi^{\rho_k}\otimes\chi^{\rho_k}$ \ts
contains characters $\chi^\la$ as constituents, for all
$$
\la \ts =\ts (n-\ell,\ell)\ts, \ \ \, 0\le \ell \le n/2\ts, \quad \ \text{or} \quad  \
\la \ts =\ts (n-r,1^r)\ts, \ \ \, 0\le r \le n-1.
$$
\end{thm}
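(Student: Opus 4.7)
The plan is to apply the Main Lemma (Lemma~\ref{t:main}) with $\mu=\rho_k$. Since $\rho_k$ is self-conjugate, its principal hook lengths give the partition $\nu_k=(2k-1,\,2k-5,\,2k-9,\,\ldots)\vdash n$, consisting of $\lceil k/2\rceil$ distinct odd parts in arithmetic progression of common difference $-4$. By the lemma, $\chi^\la$ is a constituent of $\chi^{\rho_k}\otimes\chi^{\rho_k}$ whenever $\chi^\la[\nu_k]\ne 0$, so the task is to verify this non-vanishing on the two families.

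For two-row $\la=(n-\ell,\ell)$, Young's rule (counting $\ell$-subsets fixed by a permutation of cycle type $\nu_k$) gives
$$
\chi^{(n-\ell,\ell)}[\nu_k] \;=\; g_k(\ell) - g_k(\ell-1), \qquad g_k(\ell) \;=\; [q^\ell]\prod_j\bigl(1+q^{\nu_{k,j}}\bigr),
$$
so non-vanishing reduces to strict monotonicity of $g_k$ on $[0,n/2]$, i.e.\ strict unimodality of the product polynomial $\prod_j(1+q^{\nu_{k,j}})$ there. This is a classical type of statement: unimodality of $\prod(1+q^{a_j})$ for $a_j$ in arithmetic progression goes back to Sylvester--Stanley, and strict refinements are available in recent work. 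For hook $\la=(n-r,1^r)$, the Jacobi--Trudi identity $s_{(n-r,1^r)}=\sum_{j\ge 0}(-1)^j h_{n-r+j}\,e_{r-j}$ combined with the power-sum expansions of $h$ and $e$ (using that $\nu_k$ has distinct parts) yields
$$
\chi^{(n-r,1^r)}[\nu_k] \;=\; (-1)^r\, [t^r]\, Q_k(t), \qquad Q_k(t) \;=\; \frac{1}{1-t}\prod_j (1 - t^{\nu_{k,j}}).
$$
Equivalently, $[t^r]Q_k(t) = \sum_{S \subseteq [m]:\, \sum_{i\in S}\nu_{k,i}\le r}(-1)^{|S|}$, a signed count that matches (up to sign) the Murnaghan--Nakayama rim-hook tableau count; non-vanishing for $r\in[0,n-1]$ then reduces to a combinatorial/unimodality analysis of this signed generating function.

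The main obstacle is securing non-vanishing uniformly across all $\ell$ and $r$. Because the parts of $\nu_k$ all lie in a single residue class modulo~$4$, both generating functions have structural zeros for certain small $\ell,r$: for example, $\chi^{(n-1,1)}[\nu_k]=0$ whenever $k$ is odd, since $1\in\nu_k$ forces $g_k(1)=g_k(0)=1$. For these exceptional $\la$ the Main Lemma gives nothing and one must supplement with direct Kronecker-coefficient computations -- e.g., Frobenius reciprocity and branching give $g\bigl((n-1,1),\rho_k,\rho_k\bigr) = \#(\text{corners of }\rho_k) - 1 = k-1$, positive for $k\ge 2$ and covering $\ell=1$ -- together with analogous ad hoc or semigroup-type arguments for the finitely many other exceptions. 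Choosing the universal constant $L$ large enough for the main unimodality estimates to take over, and verifying that all exceptional $\la$ are covered by the supplementary arguments, is the technical heart of the proof.
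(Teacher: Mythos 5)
Your main-range argument is essentially the paper's: apply Lemma~\ref{t:main} at $\wh\rho_k=(2k-1,2k-5,\dots)$, convert the two-row and hook character values into differences of counts of partitions into distinct parts from an arithmetic progression of difference~$4$ (your formulas $g_k(\ell)-g_k(\ell-1)$ and $(-1)^r[t^r]\frac{1}{1-t}\prod_j(1-t^{\nu_{k,j}})$ are both correct and match the paper's Frobenius/Murnaghan--Nakayama reductions in Lemmas~\ref{l:rho-two} and~\ref{l:rho-hook}), and invoke strict monotonicity of these counts in the middle range. One correction of attribution: the monotonicity you need is \emph{not} classical unimodality on all of $[0,n/2]$ (it is false there: see Example~\ref{ex:const}, where $\pd_{5,2}(21)-\pd_{5,2}(20)+\pd_{5,2}(19)=0$ and $\pd_{5,2}$ is still non-monotone around $41$); the correct tool is the Odlyzko--Richmond theorem (Theorem~\ref{t:pdp-fin}), which gives monotonicity only for $\ell\ge L(a,m)$, exactly as you ultimately use it.

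The genuine gap is your treatment of the range $\ell,r\le L$. These are not ``finitely many exceptions'': for each fixed $\ell$ below the Odlyzko--Richmond threshold (a constant, but one you cannot bound a priori and which is at least in the twenties for the relevant progressions), you must prove $(n-\ell,\ell)\in\Ups(\rho_k)$ and $(n-\ell,1^\ell)\in\Ups(\rho_k)$ \emph{for every large $k$}, i.e.\ infinitely many statements per exceptional $\ell$. The Main Lemma cannot help there, since the corresponding character values at $\wh\rho_k$ genuinely vanish for some such $\ell$, and explicit corner-type formulas (your $\ell=1$ computation, or Theorem~\ref{t:val-corners}) only reach depth $3$. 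What is needed, and what the paper imports, are positivity theorems whose range of validity grows with $k$: Ballantine--Orellana (Theorem~\ref{t:BO}) gives $(n-p,p)\in\Ups(\rho_k)$ for all $2\le p\le (k+1)/2$, and Blasiak's hook result (Theorem~\ref{t:Bla}) gives $(n-m,1^m)\in\Ups(\rho_k)$ for all $m\le k-1$; taking $k\ge 2L$ these absorb the entire sub-threshold range, and Proposition~\ref{p:sign} (conjugation symmetry for self-conjugate $\mu$) then handles hooks with $r>n/2$. Your ``ad hoc or semigroup-type arguments'' do not substitute for these: without a tool covering all $\ell\le L$ uniformly in $k$, the proof is incomplete precisely at the step the paper identifies as requiring outside input.
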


Of course, this is only a first step towards proving the Saxl conjecture.  While the Main Lemma
is a powerful tool, proving that the characters are nonzero is also rather difficult in general,
due to the alternating signs in the Murnaghan--Nakayama rule.  We use a few known combinatorial
interpretations (for small values of~$\ell$), and rather technical known results
on monotonicity of the number of certain partitions (for larger~$\ell$),
to obtain the above theorem and a constellation of related results.

\smallskip

The rest of the paper is structured as follows.  We begin with a discussion of combinatorics
and asymptotics of the number of integer partitions in Section~\ref{s:part}.  We then turn
to characters of~$S_n$ and basic formulas for their computation in Section~\ref{s:char}.
There, we introduce two more shape sequences (\emph{chopped square}
and \emph{caret}), which will appear throughout the paper.  In the next
section (Section~\ref{s:known}), we present several known results on the Kronecker product,
and find easy applications to our problem.  In the following three sections we present
a large number of increasingly technical calculations evaluating the characters in terms
of certain partition functions, and using Main Lemma and known partition
inequalities to derive the results above.  In a short Section~\ref{s:rand}, we discuss
and largely speculate what happens for random characters.  We prove the Main Lemma in
Section~\ref{s:main-proof}, and conclude with final remarks.

\bigskip

\section{Integer partitions}\label{s:part}

\subsection{Basic definitions} \.
Let $\la\vdash n$ be a partition of~$n$, and let $P_n$ denote the set of
partitions of~$n$.  Denote by~$\la'$ the conjugate partition of $\la$.
Partition $\la$ is called \emph{self-conjugate} if $\la=\la'$.
Denote by $\ell(\la)=\la_1'$ the number of parts in~$\la$.

\medskip  \subsection{Asymptotics}\label{ss:part-as}
\. Let $\pi(n)=|P_n|$ be the number of partitions of~$n$.
Then
$$1+\sum_{n=1}^\infty \pi(n)\ts t^n \. = \. \prod_{i=1}^\infty \. \frac{1}{1-t^i}\.,
$$
and
$$\pi(n) \. \sim \. \frac{e^{c\sqrt{n}}}{4\sqrt{3}\ts n}\., \qquad \text{where} \ \ c\. = \.\pi \sqrt{\frac{2}{3}}\..
$$
Denote by $\pi_k(n)$ the number of partitions $\la \vdash kn$, such that their $k$-core is empty~\cite{Mac},
i.e.~there exists a rim hook tableau of shape~$\la$ and weight $(k^n)$.  Note that $\pi_1(n)=\pi(n)$.  Then
by~\cite[Exc.7.59e]{Sta}
$$
1+\sum_{n=1}^\infty \pi_k(n)\ts t^n \. = \. \prod_{i=1}^\infty \. \frac{1}{(1-t^i)^k}\.,
$$
and Lemma~4 in~\cite{LP} gives
$$\pi_k(n) \. \sim \. \left[\frac{k^{k+1}}{2^{3k+5}\ts 3^{k+1}}\right]^{1/4}\.
\frac{e^{c\sqrt{kn}}}{n^{(k+3)/4}}\., \quad \text{where} \ \ c\. = \.\pi \sqrt{\frac{2}{3}}\..
$$

\medskip
\subsection{Limit shapes}\label{ss:part-limit} \.
Let $\la\vdash n$ be a \emph{random partition on~$n$}, i.e. chosen uniformly at random from~$P_n$.
Scale by $1/\sqrt{n}$ the Young diagram $[\la]$ of a random partition.  It is known that for every
$\ve>0$, the scaled random shape is w.h.p. within $\ve$-distance from the curve
$$
e^{-cx/2}\. +\.
e^{-cy/2}\. =\. 1\ts,
$$
where~$c$ is as above~\cite{DVZ,Ver}. Somewhat loosely, we call such $\la$ the \emph{limit shape}.
Note that the limit shape is symmetric and has two infinite tails, so the longest
part and the number of parts we have $\la_1,\ell(\la)=\om(\sqrt{n})$.  In fact,
it is known that for random~$\la$, we have
$\la_1, \ell(\la) = c^{-1}\sqrt{n}\bigl(\log n+O(1)\bigr)$ w.h.p., as $n\to \infty$
(see~\cite{Fri}).

\medskip  \subsection{Partitions into infinite arithmetic progressions}\label{ss:basic-pi} \.
Fix $a,m \ge 1$, such that \ts gcd$(a,m)=1$.  Define integers $\pd_{a,m}(n)$ by
$$
\sum_{n=0}^\infty \ts \pd_{a,m}(n) \ts t^n \. = \. \prod_{r=0}^\infty \ts \left(1+t^{a+rm}\right)\..
$$
In other words, $\pd_{a,m}(n)$ is the number of partitions of~$n$ into distinct parts
in arithmetic progression $R=\{a,a+m,a+2m,\ldots\}$. It is known~\cite{RS} that
$$\pd_{a,m}(n+1) \. > \. \pd_{a,m}(n) \. > \. 0\ts,
$$
for all $n$ large enough. Below we present a stronger result.

\medskip
\subsection{Partitions into finite arithmetic progressions}\label{ss:basic-pdp-fin} \.
Denote by $R=R(a,m,k)=\{a,a+m,a+2m,\ldots,a+km\}$ a finite arithmetic progression,
with $a,m \ge 1$, such that \ts gcd$(a,m)=1$ as above.  Denote by $\pd_R$
the coefficients in
$$
\sum_{n=0}^N \ts \pd_R(n) \ts t^n \. = \.
\prod_{r=0}^k \ts  \left(1+t^{a+rm}\right) ,
$$
where $N=(k+1)a+\binom{k+1}{2}m$ is the largest degree with a nonzero coefficient.
Note that the sequence $\{\pd_R(n)\}$ is symmetric:
$$
\pd_R(n) \, = \, \pd_R(N-n)\ts.
$$
The following special case of a general result by Odlyzko and Richmond~\cite{OR}
is the key tool we use throughout the paper.

\begin{thm}[\cite{OR}] \label{t:pdp-fin}
For every $R=R(a,m,k)$ as above, there exists $L=L(a,m)$ such that
$$\pd_R(n+1) \. > \. \pd_R(n) \. > \. 0 \ts, \quad \text{for all}
\ L \le n < \lfloor N/2\rfloor \ts.
$$
\end{thm}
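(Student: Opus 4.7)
The plan is to split the claim into the positivity assertion $\pd_R(n) > 0$ and the strict monotonicity $\pd_R(n+1) > \pd_R(n)$, and to prove each in the range $L(a,m) \le n < \lfloor N/2 \rfloor$. The palindromic symmetry $\pd_R(n) = \pd_R(N-n)$ recorded just before the statement then extends the full unimodal conclusion to $[N/2,\. N-L]$ automatically, so one only has to work on the lower half.

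For positivity, I would invoke the Sylvester--Frobenius theorem: since $\gcd(a,m)=1$, every integer $n \ge (a-1)(m-1)$ can be written as $n = xa + ym$ with nonnegative integers $x,y$. To upgrade this to a sum of \emph{distinct} elements of $R = \{a, a+m, \ldots, a+km\}$, the standard trick is to aggregate $j$ copies of $a$ into a single part $a + jm$ and then absorb the remaining $ym$ into the parts so obtained, maintaining distinctness. One checks that once $n \ge L_1(a,m)$ this construction succeeds---either $k$ is large enough to accommodate the parts used, or else the interval $[L, \lfloor N/2 \rfloor)$ is empty and there is nothing to prove. This gives $L_1$ depending only on $a$ and $m$.

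For strict monotone increase, I would follow the saddle-point method of Odlyzko--Richmond. Writing $F_R(t) = \prod_{r=0}^{k} (1+t^{a+rm})$, the Cauchy integral
$$
\pd_R(n) \, = \, \frac{1}{2\pi i} \oint \frac{F_R(t)}{t^{n+1}}\, dt
$$
is evaluated on the circle $|t| = \rho_n$, where $\rho_n \in (0,1)$ is the unique positive solution of $\rho F_R'(\rho)/F_R(\rho) = n$; by symmetry $\rho_{N/2} = 1$. Standard saddle-point analysis then yields
$$
\pd_R(n) \, = \, \frac{F_R(\rho_n)}{\rho_n^n \sqrt{2\pi\, V_n}} \bigl(1 + o(1)\bigr),
$$
where $V_n$ is the variance of the Bernoulli sum $\sum_r (a+rm)\ts B_r$ with $B_r$ independent $\{0,a{+}rm\}$-valued indicators weighted by $\rho_n^{a+rm}/(1+\rho_n^{a+rm})$. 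The leading factor $F_R(\rho_n)/\rho_n^n$ is log-concave in $n$ (it is, up to normalisation, the Legendre transform of a convex function), and this combined with positivity upgrades to a \emph{strict} $\pd_R(n+1) > \pd_R(n)$ on the relevant range.

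The principal obstacle is the \emph{uniformity of $L$ in $k$}: one needs the saddle-point error to stay below the gap $\pd_R(n+1) - \pd_R(n)$ with $L$ independent of $k$. This reduces to estimating $|F_R(\rho_n e^{i\theta}) / F_R(\rho_n)|$ for $\theta$ bounded away from $0$ on $(-\pi,\pi]$, and here the hypothesis $\gcd(a,m)=1$ is essential: it rules out a secondary saddle at any nontrivial root of unity on the contour, which would otherwise contribute a comparable term and destroy unimodality. Controlling this decay uniformly in $k$ is the technical heart of the Odlyzko--Richmond argument and would be the delicate step to execute carefully.
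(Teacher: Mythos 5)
You should first note what the paper actually does with this statement: it gives no proof at all, but imports it as a special case of the Odlyzko--Richmond theorem on unimodality of partition polynomials~\cite{OR}. So your proposal can only be judged as an attempted independent proof, and it follows the same saddle-point route as the cited source; the trouble is that, as a proof, it is not complete. The entire content of the theorem is the claim that the strict inequalities hold for all $n$ with $L\le n<\lfloor N/2\rfloor$ where $L=L(a,m)$ is \emph{uniform in $k$}, and that is exactly the step you defer (``the delicate step to execute carefully''). Until the minor-arc/error estimates are carried out uniformly, nothing has been proved beyond what the symmetry remark already gives.

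Two concrete gaps. First, log-concavity of the main term $F_R(\rho_n)\rho_n^{-n}$ together with an unquantified $(1+o(1))$ factor does not yield the strict integer inequality $\pd_R(n+1)>\pd_R(n)$: you need the increment of the main term to dominate the approximation error, and it is not even clear in which variable your $o(1)$ tends to zero. Since $L$ must depend only on $(a,m)$, the statement covers the regime of fixed $n\ge L$ and $k\to\infty$, where $\pd_R(n)$ stabilizes to the infinite-progression coefficient and no error term vanishing in $k$ can help; the decay must come from $n$ being large, uniformly in $k$, which is precisely the quantitative work you have not done. That this cannot be waved through is shown by Example~\ref{ex:const}: for $(a,m)=(5,2)$ monotonicity genuinely fails at $n=21$ and again at $n=41,42$, so any proof must locate where the main term takes over. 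Second, the positivity argument is also only gestured at, and the reduction as written is garbled: aggregating $j$ copies of $a$ gives $ja$, not $a+jm$. The correct move is to write $n=xa+ym$ and replace the $x$ copies of $a$ by distinct parts $a+r_1m,\dots,a+r_xm$ with distinct $0\le r_i\le k$ and $\sum_i r_i=y$, which requires $\binom{x}{2}\le y\le kx-\binom{x}{2}$ and hence an argument that $x$ can be adjusted (using $\gcd(a,m)=1$) to land in that window for every $n$ in the lower half-range; this is fixable but not done. As it stands your text is a reasonable reading plan for~\cite{OR}, not a proof; to match the paper itself, a citation is all that is required, and to replace the citation, the uniform saddle-point estimates are still missing.
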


In other words,
$$
\pd_R(L)  <  \ldots < \pd_R\left(\frac{N}2-1\right)
<  \pd_R\left(\frac{N}2\right) >  \pd_R\left(\frac{N}2+1\right) > \ldots  >  \pd_R(N-L)
$$
for even~$N$, and
$$
\pd_R(L)  < \ldots < \pd_R\left(\frac{N-1}{2}\right)
= \pd_R\left(\frac{N+1}{2}\right) > \ldots  >  \pd_R(N-L)
$$
for odd~$N$.  Note that~$k$ in the theorem has to be large enough to ensure
that $L < N/2$; otherwise, the theorem is trivially true (there is no such~$n$).

\bigskip

\section{Kronecker products and characters}\label{s:char}

\subsection{Young diagrams} \.
We assume the reader is familiar with the standard results in combinatorics
and representation theory of the symmetric group (see e.g.~\cite{Mac,Sag,Sta}).
Let us review some notations, definitions and basic results.

We use $[\la]$ to denote Young diagram corresponding to partition~$\la$ and
a \emph{hook length} by $h_{ij}=\la_i +\la_j' - i - j+1$, where $(i,j)\in [\la]$.
We denote by $\ds(\la)$ the \emph{Durfee size}
of~$\la$, i.e.~the size of the main diagonal in~$[\la]$.
Define a \emph{principal hook partition} $\wh\la = (h_{1,1},\ldots,h_{s,s})$,
where $s=\ds(\la)$.  Observe that $\wh\la\vdash n$.

\medskip  \subsection{Rim hook tableaux} \.
We use $\chi^\la[\nu]$ to denote the value of an irreducible character
$\chi^\la$ on the conjugacy class of cycle type $\nu$ of the symmetric group~$S_n$.
For a sequence $\ba=(a_1,\ldots,a_\ell)$, $\ell=\ell(\la)$,
denote by $\RH(\la,\ba)$ the set of \emph{rim hook tableaux} of shape~$\la$
and weight~$\ba$, with rim hooks $h_i$ of size $|h_i|=a_i$.
The \. $\sign(A)$ of a tableaux $A\in \RH(\la,\ba)$ is the product
of $(-1)^{\ell(h_i)-1}$ over all rim hooks $h_i\in A$.  The
\emph{Murnaghan--Nakayama rule} then says that for every permutation~$\ba$
of~$\nu$,
$$\chi^\la[\nu] \. = \. \sum_{A\in \RH(\la,\ba)} \. \sign(A)\ts.
$$
More generally, the result extend verbatim to skew shapes $\la/\mu$
(see e.g.~\cite[$\S 7.17$]{Sta}).

\medskip  \subsection{The Frobenius and Giambelli formulas} \.
Recall the \emph{Frobenius formula} for the character~$\chi^\la$,
$\ell(\la) =2$~:
$$
\chi^{(n-\ell,\ell)} \. = \. \chi^{(n-\ell)\circ (\ell)} \. - \. \chi^{(n-\ell+1)\circ (\ell-1)}.
$$
where $\la \circ \mu$ is a skew partition as in the Figure~\ref{f:circle}.
\begin{figure}[hbtp]
   \includegraphics[scale=0.56]{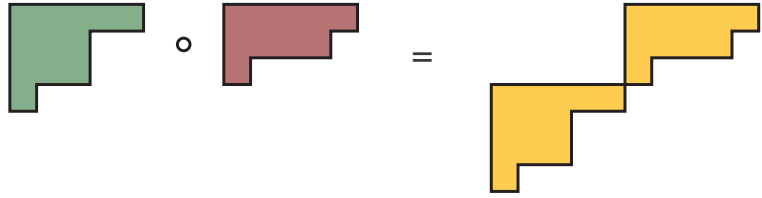}
   \caption{Partitions $\la$, $\mu$, and $\la\circ\mu$, where $\la=(5,3,3,1)$, $\mu=(5,4,1)$}
   \label{f:circle}
\end{figure}

Similarly, the \emph{Giambelli formula} for the character $\chi^\la$, where $\ds(\la) =2$~:
$$
\chi^{(a_1+1,a_2+2,2^{b_2},1^{b_1-b_2-1})} \. = \.
\chi^{(a_1+1,1^{b_1})\circ (a_2+1,1^{b_2})} \. - \.
\chi^{(a_1+1,1^{b_2})\circ (a_2+1,1^{b_1})}.
$$
where $n=a_1+a_2+b_1+b_2+2$.  The formula is illustrated in Figure~\ref{f:giam}
(here $a_1=8$, $a_2=2$, $b_1=5$, and $b_2=3$).

\begin{figure}[hbtp]
  \includegraphics[scale=0.42]{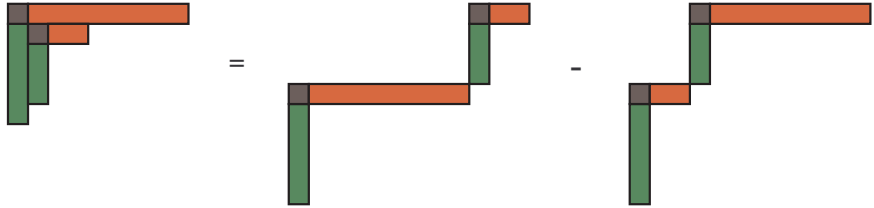}
   \caption{Partitions $\la=(9,4,2^3,1)$, $(9,1^5)\circ (3,1^3)$, and
   $(3,1^5)\circ (9,1^{3})$.}
   \label{f:giam}
\end{figure}

\medskip
\subsection{Kronecker products}\.  Let $\la, \mu \vdash n$.
The \emph{Kronecker product} of characters $\chi^\la$ and $\chi^\mu$ satisfies
$(\chi^\la\otimes\chi^\mu)[\nu] = \chi^\la[\nu]\cdot\chi^\mu[\nu]$.
\emph{Kronecker coefficients} are defined as
$$g(\la,\mu,\nu) \. := \. \<\chi^\la\otimes\chi^\mu, \chi^\nu\>
\. = \. \<1, \chi^\la\otimes\chi^\mu \otimes \chi^\nu\>\ts,$$
and thus they are symmetric for all $\la, \mu, \nu \vdash n$.

For a partition $\la\vdash n$, denote by $\Ups(\la)$ the set of
$\mu \vdash n$ such that $g(\mu,\la,\la)>0$.  The tensor product
conjecture says that $\Ups(\la)=P_n$ for some~$\la$.  The Saxl
conjecture says that $\Ups(\rho_k)=P_n$.  We call $\rho_k$ the
\emph{staircase shape} of order~$k$.

\medskip
\subsection{Chopped square}\.
Denote $\eta_k=(k^{k-1},k-1)\vdash k^2-1$. We call $\eta_k$
the \emph{chopped square shape} of order~$k$.  Obviously,
$\ds(\eta_k)=k-1$, and the scaled limit shape is a $1/2 \times 1/2$ square.

\begin{conj}  \label{conj:chop}
For all $k\ge 2$, we have $\Ups(\eta_k)=P_n$.
\end{conj}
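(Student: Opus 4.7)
The plan is to attack Conjecture~\ref{conj:chop} through the Main Lemma. First note that $\eta_k=(k^{k-1},k-1)$ is self-conjugate with $\ds(\eta_k)=k-1$, and its diagonal hook lengths are $h_{i,i}=2k-2i+1$ for $1\le i\le k-1$, so
\[
\nu \,:=\, \wh{\eta_k} \,=\, (2k-1,\,2k-3,\,\ldots,\,5,\,3) \,\vdash\, n:=k^2-1.
\]
This is a partition into $k-1$ distinct odd parts forming the finite arithmetic progression $R=R(3,2,k-2)$ of $\S$\ref{ss:basic-pdp-fin}, precisely the setup where Theorem~\ref{t:pdp-fin} applies. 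By Lemma~\ref{t:main}, a sufficient condition for $\chi^\la$ to be a constituent of $\chi^{\eta_k}\otimes\chi^{\eta_k}$ is $\chi^\la[\nu]\ne 0$, and the plan is to establish this for as many $\la\vdash n$ as possible.

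I would first reproduce, with $\eta_k$ in place of $\rho_k$, the proof of Theorem~\ref{t:summary}. For hooks $\la=(n-r,1^r)$, the eigenvalue argument on $\cc^n=V\oplus\mathbf{1}$ (using that each $\nu_i$ is odd, so $\prod_{j=0}^{\nu_i-1}(1+t\omega^j)=1+t^{\nu_i}$) yields
\[
\sum_{r=0}^{n-1}\chi^{(n-r,1^r)}[\nu]\,t^r \,=\, \frac{\prod_{i=1}^{k-1}(1+t^{\nu_i})}{1+t} \,=\, (1-t+t^2)\prod_{i\in R'}(1+t^i),
\]
with $R':=R\setminus\{3\}$, so $\chi^{(n-r,1^r)}[\nu]=\pd_{R'}(r)-\pd_{R'}(r-1)+\pd_{R'}(r-2)$; strict unimodality of $\pd_{R'}$ from Theorem~\ref{t:pdp-fin} makes this positive throughout the strictly monotone portions of $\pd_{R'}$, with a handful of near-peak exceptions to check directly. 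For two-row shapes $\la=(n-\ell,\ell)$, the Frobenius formula of $\S$\ref{s:char} combined with the disjoint-shape factorization $\chi^{(n-\ell)\circ(\ell)}[\nu]=\pd_R(\ell)$ gives $\chi^{(n-\ell,\ell)}[\nu]=\pd_R(\ell)-\pd_R(\ell-1)$, positive by Theorem~\ref{t:pdp-fin}; small $\ell<L$ is handled by direct calculation. For $\la$ of higher Durfee size $d:=\ds(\la)\ge 3$, my plan is to iterate the Giambelli formula in $\S$\ref{s:char}, writing $\chi^\la$ as a signed sum of $d!$ characters of disconnected skew shapes $\mu^{(1)}\circ\cdots\circ\mu^{(d)}$ with each $\mu^{(i)}$ a hook, and factoring through splittings of $\nu$; the result is a signed linear combination of $\pd_S$-counts for $S\subseteq R$, and one then hopes strict unimodality together with the relative abundance of parts in $\nu$ (the $k-1$ parts of $\wh{\eta_k}$ being nearly double the $\lceil k/2\rceil$ parts of $\wh{\rho_k}$, suggesting this conjecture is marginally more tractable than the Saxl conjecture) suffices to force non-vanishing for most $\la$.

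The main obstacle is that Lemma~\ref{t:main} is not sufficient on its own: $\chi^\la[\nu]$ genuinely vanishes at sporadic $\la$, already at $k=3$, $\la=(5,1^3)$, where the hook formula gives $[t^3]\bigl((1-t+t^2)(1+t^5)\bigr)=0$, and one expects such exceptions to persist for every $k$. For these residual shapes, separate positivity tools are required — candidates include the semigroup-type propagation results and Kronecker rules for hooks surveyed in $\S$\ref{s:known}, or a direct Littlewood--Richardson-style construction exhibiting $\chi^\la$ as a constituent. Furthermore, controlling the alternating signs in the full Giambelli expansion for $\la$ of unbounded Durfee size is the same essential difficulty that confines Theorem~\ref{t:summary} to hooks and two-rows; absent a new combinatorial identity or a convolution-unimodality theorem for $\pd_R$-sequences, I expect Conjecture~\ref{conj:chop} to remain out of reach in full generality, with the realistic near-term target being an explicit hook/two-row analog of Theorem~\ref{t:summary} for $\eta_k$.
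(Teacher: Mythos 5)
You have not proved Conjecture~\ref{conj:chop}, and neither does the paper: this statement is one of the paper's open conjectures (verified only computationally, for $k\le 5$), so there is no proof to match your proposal against. What your proposal actually contains, in its concrete parts, is a re-derivation of the paper's partial results for the chopped square: the identification $\wh\eta_k=(2k-1,2k-3,\ldots,5,3)$, the hook evaluation $\chi^{(n-r,1^r)}[\wh\eta_k]=\pd_{R'}(r)-\pd_{R'}(r-1)+\pd_{R'}(r-2)$ with $R'=\{5,7,\ldots,2k-1\}$, and the two-row evaluation $\chi^{(n-\ell,\ell)}[\wh\eta_k]=\pd_R(\ell)-\pd_R(\ell-1)$ with $R=\{3,5,\ldots,2k-1\}$, fed into the Main Lemma via Theorem~\ref{t:pdp-fin} --- this is exactly Lemma~\ref{l:eta-hook} and Lemma~\ref{l:eta-two} (your generating-function derivation of the hook formula replaces the paper's direct Murnaghan--Nakayama count of the three placements of the $3$-hook, but the outcome is identical). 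Your honest conclusion that the full conjecture remains out of reach is also the paper's stance, so as a research plan this is sound; as a proof of the statement it is not one.

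Two concrete points where your plan is weaker than you suggest. First, the residual small-$\ell$ cases for $\eta_k$ cannot be dispatched by ``direct calculation'' or by the tools of Section~\ref{s:known}: Theorem~\ref{t:Bla} requires strictly decreasing initial parts, and $\eta_k=(k^{k-1},k-1)$ has $\mu_1=\mu_2$, so it yields nothing beyond the trivial case --- this is precisely why the paper states an all-hooks corollary for $\rho_k$ and $\mon_k$ but not for $\eta_k$. Moreover the exceptional zeros are not ``near-peak'' and not finite in a $k$-independent way: Example~\ref{ex:const} shows $\chi^{(n-21,1^{21})}[\wh\eta_k]=0$ for all $k\ge 11$, so at these $\la$ the Main Lemma is silent for every large $k$ and positivity would need an entirely different argument (for two rows, the small-$\ell$ gap is closed by Theorem~\ref{t:BO}, not by computation). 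Second, the proposed Giambelli iteration for $\ds(\la)\ge 3$ produces $d!$ signed skew-hook terms whose cancellation you have no tool to control; the paper's near-hook and near-two-row results (Theorems~\ref{t:near-hook} and~\ref{t:near-two}) show how quickly this becomes delicate already for $d=2$ with one extra box, and Proposition~\ref{p:rand-ups} and Proposition~\ref{prop:rand-sc-char} warn that character vanishing is a genuine, not sporadic, obstruction to any proof that goes only through the Main Lemma.
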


This conjecture was checked by Andrew Soffer for $k\le 5$.

\medskip  \subsection{Caret shape}\.
Consider
$$\mon_k \. = \. (3k-1,3k-3,\ldots,k+3,k+1,k,k-1,k-1,k-2,k-2,\ldots,2,2,1,1)\ts,$$
which
we call a \emph{caret shape}.
Note that $\mon_k'=\mon_k$, $n=|\mon_k|=3k^2$, $\ds(\mon_k)=k$,
and the principal hook partition
$\wh{\mon_k}=(6k-3,6k-9,\ldots,3)$.  After $1/\sqrt{n}$ scaling, the partition
has a 4-gon limit shape as in Figure~\ref{f:3shape}.

\begin{figure}[hbtp]
   \includegraphics[scale=0.345]{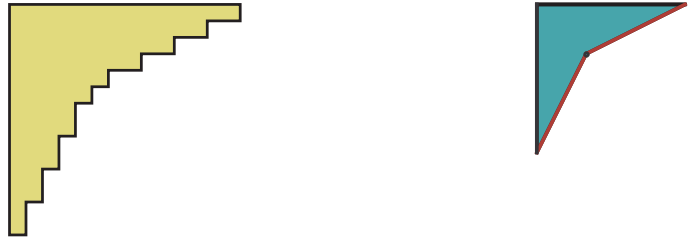}
   \caption{Partition $\mon_5$ and the limit shape of $\mon_k$.}
   \label{f:3shape}
\end{figure}

\begin{conj}  \label{conj:ups}
For all $k\ge 2$, we have $\Ups(\mon_k)=P_n$.
\end{conj}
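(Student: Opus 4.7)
The plan is to invoke the Main Lemma (Lemma~\ref{t:main}) with $\mu=\mon_k$. Since $\mon_k$ is self-conjugate with Durfee size $k$ and principal-hook lengths $h_{i,i}=6k-6i+3$, the partition $\nu=\wh{\mon_k}=(6k-3,6k-9,\ldots,3)\vdash 3k^2$ has parts equal to three times the odd staircase $(2k-1,2k-3,\ldots,1)$. The Main Lemma reduces Conjecture~\ref{conj:ups} to the assertion that $\chi^\la[\nu]\ne 0$ for every $\la\vdash 3k^2$.

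For concrete families of $\la$, I would follow the template of Theorem~\ref{t:summary}. For two-row shapes $\la=(n-\ell,\ell)$, the Frobenius formula gives
\[
\chi^{(n-\ell,\ell)}[\nu]\,=\,\pd_R(\ell/3)\,-\,\pd_R\bigl((\ell-1)/3\bigr),
\]
where $R=\{1,3,\ldots,2k-1\}$ and $\pd_R(x)=0$ for non-integer~$x$. When $\ell\equiv 0$ or $1\pmod 3$ exactly one of the two terms is nonzero, and Theorem~\ref{t:pdp-fin} provides strict positivity throughout the bulk range of $\ell$, leaving only small-$\ell$ boundary values to be checked by direct enumeration of subsets of the odd staircase. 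An analogous Murnaghan--Nakayama recursion should handle hooks $\la=(n-r,1^r)$, and Giambelli's formula extends the approach to Durfee size~$2$ at the cost of splitting into four disconnected-shape terms whose differences again reduce to inequalities among $\pd_R$-values.

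The serious obstacle, specific to the caret shape, is that every part of $\nu$ is divisible by~$3$. Consequently $\chi^\la[\nu]=0$ whenever the $3$-core of $\la$ is nonempty, and in addition $\chi^{(n-\ell,\ell)}[\nu]=0$ for $\ell\equiv 2\pmod 3$. Since the count $\pi_3(k^2)$ of partitions of $3k^2$ with empty $3$-core is asymptotically a factor of $k$ smaller than $\pi(3k^2)$, the Main Lemma fails to detect a vast majority of candidate $\la$ as members of $\Ups(\mon_k)$, even though the conjecture predicts they belong. To close this gap one would need either a strengthening of the Main Lemma that exploits several conjugacy classes of $S_n$ at once, or a parallel positivity mechanism that, via the Littlewood $3$-quotient $\la=(\la^{(0)},\la^{(1)},\la^{(2)})$ with $\sum|\la^{(i)}|=k^2$, transfers the problem to a product of three smaller characters evaluated on the odd staircase and its refinements. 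Absent such a refinement, Conjecture~\ref{conj:ups} appears strictly harder than the Saxl conjecture, and substantial progress on it will likely require a genuinely new tool beyond the ones developed in this paper.
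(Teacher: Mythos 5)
You have not proved this statement, and neither does the paper: Conjecture~\ref{conj:ups} is left open there, so there is no proof to compare against. Your proposal is, correctly, not a proof but a reduction-plus-obstruction analysis, and it matches the paper's own partial evidence almost exactly. The reduction via the Main Lemma (Lemma~\ref{t:main}) with $\nu=\wh{\mon_k}=(6k-3,6k-9,\ldots,3)$, the two-row computation $\chi^{(n-\ell,\ell)}[\nu]=\pd_S(\ell/3)-\pd_S((\ell-1)/3)$ with $S=\{1,3,\ldots,2k-1\}$, the hook analogue, and the appeal to Theorem~\ref{t:pdp-fin} are precisely the content of Lemmas~\ref{l:ups-hook} and~\ref{l:ups-two} and Corollaries~\ref{c:ups-hook} and~\ref{c:ups-two}; and your obstruction --- every part of $\wh{\mon_k}$ is divisible by $3$, so $\chi^\la[\wh{\mon_k}]=0$ unless $\la$ has empty $3$-core, which happens with probability $O(1/\sqrt{n})$ --- is Proposition~\ref{p:rand-ups} and the remark following it. So your conclusion that the Main Lemma alone cannot settle the conjecture is exactly the paper's position.

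Two smaller corrections to the sketch. First, the ``small-$\ell$ boundary values'' cannot be ``checked by direct enumeration'' within the character approach: the character itself vanishes for some small $\ell$ (e.g.\ $\ell=6$ gives $\pd_S(2)=0$, and all $\ell\equiv 2\pmod 3$ give $0$), so the paper closes the small-$\ell$ range with entirely different tools, namely Theorem~\ref{t:BO} for two-row shapes and Theorem~\ref{t:Bla} for hooks, and even then only for $k$ large enough; moreover the two-row cases $\ell\equiv 2\pmod 3$ with $\ell$ large remain unproved even in the paper, so your claim that only small $\ell$ is left is too optimistic. Second, your suggested remedy via the Littlewood $3$-quotient is a plausible direction but is not developed here or in the paper; note that passing to the $3$-quotient controls only the single character value $\chi^\la[\wh{\mon_k}]$, whereas what is needed for the partitions with nonempty $3$-core is positivity of $g(\la,\mon_k,\mon_k)$ itself, which no evaluation at this one (split) class can certify. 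As written, then, your text should be read as an accurate account of why Conjecture~\ref{conj:ups} is open, not as a proof of it.
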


\begin{rem}{\rm
Despite the less elegant shape of~$\mon_k$ partitions,
there seems to be nearly as much evidence in favor of 
Conjecture~\ref{conj:ups} as in favor of the Saxl Conjecture~\ref{conj:saxl} 
(see corollaries~\ref{c:ups-hook} and~\ref{c:ups-two}).
See Section~\ref{s:rand} for more caret shapes and possibility
of other self-conjugate shapes satisfying the Tensor Product
Conjecture.
}
\end{rem}
\bigskip


\section{Known results and special cases}\label{s:known}

\subsection{General results}\.
The following results are special cases of known results about
Kronecker products, applied to our case.

\begin{lemma} \label{l:basic}
Let $\nu\vdash n$. Then $\chi^\mu \otimes \chi^{(n)} = \chi^\mu$ and
$\chi^\mu \otimes \chi^{(1^n)} = \chi^{\mu'}$.
\end{lemma}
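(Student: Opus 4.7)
The plan is to identify the two special characters $\chi^{(n)}$ and $\chi^{(1^n)}$ explicitly and then appeal to pointwise multiplication of characters, $(\chi^\mu\otimes\chi^{(n)})[\nu] = \chi^\mu[\nu]\cdot\chi^{(n)}[\nu]$, and similarly for $\chi^{(1^n)}$.

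For the first identity, I would note that $\chi^{(n)}$ is the trivial character, since the Specht module indexed by the one-row shape $(n)$ is the one-dimensional trivial representation. Thus $\chi^{(n)}[\nu]=1$ for every $\nu\vdash n$, and the identity $\chi^\mu\otimes\chi^{(n)}=\chi^\mu$ is immediate. Alternatively, one sees this directly from the Murnaghan--Nakayama rule: for shape $(n)$, there is a unique rim hook tableau of any weight $\ba$, obtained by peeling off rim hooks from the right, and its sign is $+1$ since every hook is horizontal.

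For the second identity, I would first compute $\chi^{(1^n)}[\nu]$ using the Murnaghan--Nakayama rule. The shape $(1^n)$ admits a unique rim hook tableau of any weight $\ba$, obtained by stacking vertical strips, and each rim hook of length $a_i$ contributes sign $(-1)^{a_i-1}$, giving
\[
\chi^{(1^n)}[\nu] \ = \ \prod_{i=1}^{\ell(\nu)} (-1)^{\nu_i - 1} \ = \ (-1)^{n-\ell(\nu)} \ = \ \mathrm{sgn}(\pi),
\]
where $\pi$ is any permutation of cycle type $\nu$. So $\chi^{(1^n)}$ is the sign character.

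It remains to show $\chi^{\mu'}[\nu] = (-1)^{n-\ell(\nu)}\ts\chi^{\mu}[\nu]$ for every $\nu\vdash n$. I would prove this by exhibiting a weight-preserving bijection $\RH(\mu,\ba)\to\RH(\mu',\ba)$ given by transposing the Young diagram together with each of its rim hooks. Under this involution, a rim hook $h$ of length $\ell$ with leg length $r-1$ is sent to one with leg length $\ell-r$ (the arm and leg lengths swap), so its sign $(-1)^{r-1}$ is multiplied by $(-1)^{\ell-1}$. Taking the product over all rim hooks in the tableau yields a total sign change of $(-1)^{\sum_i(\nu_i-1)}=(-1)^{n-\ell(\nu)}$, independent of the tableau. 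Summing over $\RH(\mu,\ba)$ then gives the desired identity.

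No step presents a real obstacle; this is a standard check. The only point that needs care is verifying that transposition really is a bijection between rim hook tableaux of shapes $\mu$ and $\mu'$ (i.e., the order in which hooks are stripped is preserved and transposed hooks remain valid rim hooks of the transposed shape), and that the arm--leg exchange accounts correctly for the sign.
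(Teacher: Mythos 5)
Your proposal is correct and follows essentially the same route as the paper: identify $\chi^{(n)}$ as the trivial character and $\chi^{(1^n)}$ as the sign character, then deduce $\chi^\mu\otimes\chi^{(1^n)}=\chi^{\mu'}$ from the Murnaghan--Nakayama rule. The only difference is that you spell out the Murnaghan--Nakayama step in full (the transposition bijection on rim hook tableaux with the arm--leg sign exchange), which the paper leaves implicit.
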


\begin{proof}
Note that $\chi^{(n)}$ is the trivial character and $\chi^{(1^n)}$ is the sign character.
The first identity is trivial; the second follows from the Murnaghan--Nakayama rule.
\end{proof}

\begin{cor}
Let $\mu \vdash n$.
Then $\mu = \mu'$ if and only if $(1^n) \in \Ups(\mu)$.
\end{cor}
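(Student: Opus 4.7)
The plan is to unwind the definition of $\Ups(\mu)$ and reduce the claim to an orthogonality computation, using Lemma~\ref{l:basic} to handle the sign character.

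By definition, $(1^n) \in \Ups(\mu)$ means $g((1^n),\mu,\mu) > 0$, and
\[
g((1^n),\mu,\mu) \. = \. \langle \chi^{(1^n)}, \chi^\mu \otimes \chi^\mu \rangle.
\]
Since characters of $S_n$ are real-valued, the inner product is symmetric with respect to moving one factor across, so I would rewrite this as
\[
\langle \chi^{(1^n)}, \chi^\mu \otimes \chi^\mu \rangle \. = \. \langle \chi^{(1^n)} \otimes \chi^\mu, \chi^\mu \rangle.
\]
At this point Lemma~\ref{l:basic} applies directly: $\chi^{(1^n)} \otimes \chi^\mu = \chi^{\mu'}$, so the expression collapses to $\langle \chi^{\mu'}, \chi^\mu \rangle$.

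By orthogonality of the irreducible characters of $S_n$, $\langle \chi^{\mu'}, \chi^\mu \rangle$ equals $1$ when $\mu = \mu'$ and $0$ otherwise. Thus $g((1^n),\mu,\mu) > 0$ if and only if $\mu = \mu'$, which is exactly the stated equivalence. There is no real obstacle here; the only point worth noting is the identity $\langle \chi^{(1^n)}, \chi^\mu \otimes \chi^\mu \rangle = \langle \chi^{(1^n)} \otimes \chi^\mu, \chi^\mu \rangle$, which is immediate from the definition of the inner product together with the fact that $S_n$-characters are real (so no conjugation is required).
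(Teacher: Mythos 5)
Your proof is correct and follows essentially the same route as the paper: move the sign character across the inner product, apply Lemma~\ref{l:basic} to get $\chi^{\mu'}$, and conclude by orthogonality that $\langle \chi^{\mu'}, \chi^\mu \rangle$ is nonzero exactly when $\mu = \mu'$.
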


\begin{proof}
By Lemma~\ref{l:basic}
$$ \< \chi^\mu \otimes \chi^\mu, \chi^{(1^n)} \> =
\< \chi^\mu, \chi^\mu \otimes \chi^{(1^n)} \> =
\< \chi^\mu, \chi^{\mu'} \>.$$
So, the claim follows.
\end{proof}

\begin{prop}\label{p:sign}
Let $\mu \vdash n$.
If $\mu= \mu'$, then $g(\la,\mu,\mu)=g(\la',\mu,\mu)$.
\end{prop}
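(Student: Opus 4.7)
The plan is to reduce this to a direct application of Lemma~\ref{l:basic} together with the tensor-Hom adjointness (i.e.\ the standard ``moving a factor across the inner product'' identity for characters of finite groups). The key observation is that tensoring any character with the sign character $\chi^{(1^n)}$ conjugates the partition indexing it, and under the hypothesis $\mu=\mu'$ this operation fixes $\chi^\mu$.

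First, I would rewrite the Kronecker coefficient on the right-hand side as
$$
g(\la',\mu,\mu) \. = \. \<\chi^{\la'},\. \chi^\mu\otimes\chi^\mu\>.
$$
By Lemma~\ref{l:basic}, $\chi^{\la'}=\chi^\la\otimes\chi^{(1^n)}$. Substituting this and using that tensoring by $\chi^{(1^n)}$ is self-adjoint (since $\chi^{(1^n)}\otimes\chi^{(1^n)}=\chi^{(n)}$, the trivial character), I move the sign character to the other side of the inner product:
$$
\<\chi^\la\otimes\chi^{(1^n)},\. \chi^\mu\otimes\chi^\mu\> \. = \. \<\chi^\la,\. \chi^\mu\otimes\chi^\mu\otimes\chi^{(1^n)}\>.
$$

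Next, I would simplify the right factor using Lemma~\ref{l:basic} again: $\chi^\mu\otimes\chi^{(1^n)}=\chi^{\mu'}$. The hypothesis $\mu=\mu'$ now intervenes decisively, giving $\chi^\mu\otimes\chi^{(1^n)}=\chi^\mu$, so that
$$
\chi^\mu\otimes\chi^\mu\otimes\chi^{(1^n)} \. = \. \chi^\mu\otimes\chi^\mu.
$$
Plugging back in yields $\<\chi^\la,\chi^\mu\otimes\chi^\mu\>=g(\la,\mu,\mu)$, which is the desired equality.

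There is essentially no obstacle here; the only subtlety to double-check is the sign in the adjointness step, but since $\chi^{(1^n)}$ is a $\pm 1$-valued class function whose square is the trivial character, moving it across the inner product introduces no complication. The proof is therefore a two-line manipulation once Lemma~\ref{l:basic} is in hand, and it trivially generalizes to the statement that $g(\la,\mu,\nu)=g(\la',\mu',\nu)=g(\la',\mu,\nu')$ whenever one of the three partitions is self-conjugate.
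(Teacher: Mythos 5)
Your proof is correct and follows essentially the same route as the paper: both arguments rest on Lemma~\ref{l:basic} (tensoring with $\chi^{(1^n)}$ conjugates the indexing partition) together with moving the real-valued sign character across the inner product, with the hypothesis $\mu=\mu'$ fixing $\chi^\mu$. The only cosmetic difference is that you start from $g(\la',\mu,\mu)$ while the paper starts from $g(\la,\mu,\mu)$.
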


\begin{proof}
Simply observe that by Lemma~\ref{l:basic} we have
$$\aligned
g(\la,\mu,\mu) \, & = \, \<\chi^\la, \chi^{\mu'}\otimes \chi^\mu\> \,
 = \, \<\chi^\la, \chi^\mu \otimes \ts \chi^{(1^n)} \ts   \otimes \chi^\mu\> \\
& = \, \<\chi^\la, \ts \chi^{(1^n)} \ts   \otimes \chi^\mu \otimes \chi^\mu\>
 = \, \<\chi^\la\otimes \ts \chi^{(1^n)} \ts ,\chi^{\mu'} \otimes \chi^\mu\> =
g(\la',\mu,\mu),
\endaligned
$$
as desired. \end{proof}

\begin{thm}[\cite{BR}]  \label{t:br}
If $\ds(\la)>2\ts \ds(\mu)^2$, then $\la \notin \Ups(\mu)$.\footnote{Thm.~3.26
in~\cite{BR} is more general, and applies to all diagonal lengths of
constituents in all $\chi^\la\otimes\chi^\nu$.}
\end{thm}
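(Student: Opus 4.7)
\medskip
\noindent
\textbf{Proof proposal for Theorem~\ref{t:br}.}

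The plan is to establish the contrapositive in the form $g(\la,\mu,\mu)>0 \implies \ds(\la)\le 2\ts\ds(\mu)^2$ by a two-stage reduction: first from arbitrary $\mu$ to characters of bounded Durfee size via Giambelli's formula, and then a direct Dvir-style rectangular bound for the reduced problem.

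Set $d=\ds(\mu)$ and write $\mu$ in Frobenius coordinates $(a_1,\ldots,a_d\mid b_1,\ldots,b_d)$. Apply Giambelli's formula to express
$$
\chi^\mu \. = \. \sum_{\sigma\in S_d}\sign(\sigma)\ts\chi^{\Lambda(\sigma)},
$$
where $\Lambda(\sigma)$ is the skew shape obtained by diagonally stacking the $d$ hooks $(a_i+1,1^{b_{\sigma(i)}})$, as in the $\ds(\la)=2$ case depicted in Figure~\ref{f:giam}. By Littlewood--Richardson, each skew character $\chi^{\Lambda(\sigma)}$ expands as a non-negative combination of straight-shape characters $\chi^\rho$; moreover, a product of $d$ hook Schur functions has Schur expansion supported on partitions with Durfee size at most~$d$, as one checks by Pieri-rule induction (each hook contributes at most one cell to the main diagonal).

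Consequently, bilinearity of the Kronecker product gives
$$
\chi^\mu\otimes\chi^\mu \. \in \. \mathrm{span}_{\zz}\bigl\{\chi^\rho\otimes\chi^{\rho'} \,:\, \ds(\rho),\ts\ds(\rho')\le d\bigr\}\ts.
$$
Since signed cancellations can only shrink the support of a character (the support of $f=\sum_i \pm f_i$ lies in $\bigcup_i \mathrm{supp}(f_i)$), it suffices to prove that whenever $\ds(\rho),\ds(\rho')\le d$, every Kronecker constituent $\chi^\la$ of $\chi^\rho\otimes\chi^{\rho'}$ satisfies $\ds(\la)\le 2d^2$. This reduced statement is the Durfee-size analogue of the Clausen--Meier length bound $\ell(\la)\le\ell(\rho)\ts\ell(\rho')$, and is most cleanly established via the identity $s_\la[XY]=\sum_{\rho,\rho'} g(\la,\rho,\rho')\ts s_\rho(X)\ts s_{\rho'}(Y)$: specializing $X$ and $Y$ to their minimal Durfee-supporting alphabets forces $\la$ into a rectangular envelope whose Durfee size is at most $2d^2$.

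The main obstacle is precisely this final estimate. The factor $2$ in $2d^2$ (rather than $d^2$) reflects that one must bound $\ell(\la)$ and $\la_1$ separately via Clausen--Meier and its conjugate, and then combine through $\ds(\la)\le\min\bigl(\ell(\la),\la_1\bigr)$. Making this combination sharp requires a careful Frobenius-coordinate analysis tracking how the diagonal cells of $\la$ are forced by the diagonal data of $\rho,\rho'$. This bookkeeping -- rather than the Giambelli reduction, which is essentially formal -- is where the technical content of the Bessenrodt--Ritter argument sits.
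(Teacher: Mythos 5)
There is a genuine gap, and it sits exactly where you yourself place the ``main obstacle''. First, the Giambelli reduction buys nothing: $\mu$ itself already has Durfee size $d=\ds(\mu)$, so the ``reduced statement'' (every constituent of $\chi^\rho\otimes\chi^{\rho'}$ with $\ds(\rho),\ds(\rho')\le d$ has Durfee size at most $2d^2$) is literally the theorem in the special case $\rho=\rho'=\mu$; the first stage is formally harmless but vacuous, and the entire content is the final estimate. Second, the route you sketch for that estimate does not work. Bounding $\ell(\la)\le \ell(\rho)\ts\ell(\rho')$ (Clausen--Meier/Dvir) and $\la_1\le \rho_1\rho_1'$ separately and then combining via $\ds(\la)\le\min\bigl(\ell(\la),\la_1\bigr)$ is useless here, because a bound on $\ds(\rho)$ bounds neither $\ell(\rho)$ nor $\rho_1$ (a single column has Durfee size~$1$ and arbitrarily many rows), so neither factor on the right-hand side is controlled. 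For the same reason, ``specializing $X$ and $Y$ to their minimal Durfee-supporting alphabets'' cannot be done with ordinary alphabets: finitely many ordinary variables detect $\ell(\la)$, not $\ds(\la)$, and the correct envelope is not rectangular in any case --- constituents of $\chi^\mu\otimes\chi^\mu$ can have arbitrarily many rows already for $\mu$ a hook ($d=1$).

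What is actually needed --- and what the paper invokes rather than proves, citing Thm.~3.26 of \cite{BR} (Berele--Regev, not Bessenrodt--Ritter) --- is the hook-containment theorem for Kronecker products: writing $H(p,q)=\{\la : \la_{p+1}\le q\}$, if $\rho\in H(p_1,q_1)$ and $\rho'\in H(p_2,q_2)$, then every constituent of $\chi^\rho\otimes\chi^{\rho'}$ lies in $H(p_1p_2+q_1q_2,\ p_1q_2+q_1p_2)$. Since $\ds(\la)\le d$ is equivalent to $\la\in H(d,d)$, applying this with $\rho=\rho'=\mu$ places all constituents in $H(2d^2,2d^2)$, hence of Durfee size at most $2d^2$, which is the theorem. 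The proof of the containment is indeed in the spirit of the identity $s_\la[XY]=\sum_{\rho,\rho'}g(\la,\rho,\rho')\ts s_\rho(X)\ts s_{\rho'}(Y)$ that you quote, but it requires \emph{super}alphabets: hook Schur functions $s_\la(x_1,\dots,x_p/y_1,\dots,y_q)$ are nonzero precisely when $\la\in H(p,q)$, and the product of two superalphabets of size $(d\ts|\ts d)$ is a superalphabet of size $(2d^2\ts|\ts 2d^2)$, since even times even and odd times odd variables are even, and mixed products are odd. That superization is the missing idea; without it your final estimate, and hence the theorem, is not established by the argument as written.
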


\begin{cor}
If $\ds(\mu)<  \frac{n^{1/4}}{\sqrt{2}}$,
then $\Ups(\mu)\ne P_n$.
\end{cor}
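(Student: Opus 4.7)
The plan is a one-line application of Theorem~\ref{t:br}. That bound guarantees $\la\notin\Ups(\mu)$ for every $\la\vdash n$ with $\ds(\la)>2\ts\ds(\mu)^2$, so to conclude $\Ups(\mu)\ne P_n$ it is enough to exhibit a single such~$\la$.

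From the hypothesis $\ds(\mu)<n^{1/4}/\sqrt{2}$, squaring and doubling yields $2\ts\ds(\mu)^2<\sqrt{n}$. Since partitions of $n$ can carry Durfee size up to $s:=\lfloor\sqrt{n}\rfloor$, I would take the ``almost-square'' partition
$$
\la \, = \, \bigl(s+(n-s^2),\ s,\ s,\ \ldots,\ s\bigr) \, \vdash \, n,
$$
with $s-1$ trailing copies of~$s$; the $s\times s$ Durfee block sits inside $[\la]$, so $\ds(\la)=s$, which is the maximum possible Durfee size of a partition of~$n$. The bound $2\ts\ds(\mu)^2<\sqrt{n}$, together with the integrality of $2\ts\ds(\mu)^2$, then gives $\ds(\la)=s>2\ts\ds(\mu)^2$ using the slack provided by the strict hypothesis, and Theorem~\ref{t:br} applies to this~$\la$ to yield $\la\notin\Ups(\mu)$, hence $\Ups(\mu)\ne P_n$.

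There is essentially no obstacle here: the entire content of the corollary is Theorem~\ref{t:br} combined with the elementary observation that partitions of $n$ realize every Durfee size up to $\lfloor\sqrt{n}\rfloor$. The one routine check is the integer-parity bookkeeping that upgrades $2\ts\ds(\mu)^2<\sqrt{n}$ to the strict inequality $\ds(\la)>2\ts\ds(\mu)^2$ required by Theorem~\ref{t:br}, which follows from the strict form of the hypothesis.
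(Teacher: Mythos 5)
Your overall route is exactly the intended one: the paper gives no separate argument for this corollary, treating it as immediate from Theorem~\ref{t:br} together with the observation that partitions of $n$ realize every Durfee size up to $\lfloor\sqrt{n}\rfloor$, and your near-square $\la$ is a perfectly good witness of maximal Durfee size.

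The problem is the final ``integer-parity bookkeeping'' step, which does not work as claimed. From $2\ts\ds(\mu)^2<\sqrt{n}$ and the integrality of $2\ts\ds(\mu)^2$ you may conclude $2\ts\ds(\mu)^2\le\lfloor\sqrt{n}\rfloor$, but not the strict inequality $\lfloor\sqrt{n}\rfloor>2\ts\ds(\mu)^2$ that Theorem~\ref{t:br} requires: when $n$ is not a perfect square and $\lfloor\sqrt{n}\rfloor=2\ts\ds(\mu)^2$ (i.e.\ $4d^4<n\le 4d^4+4d^2$ with $d=\ds(\mu)$), every partition of $n$ has Durfee size at most $2\ts\ds(\mu)^2$ and the theorem rules out nothing. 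This is not a removable technicality: for $n=5$ and $\mu=(3,1,1)$ the hypothesis $\ds(\mu)=1<5^{1/4}/\sqrt{2}$ holds, yet a direct character computation shows $\chi^{(3,1,1)}\otimes\chi^{(3,1,1)}$ contains every irreducible of $S_5$, so $\Ups(\mu)=P_5$ and the corollary as literally stated fails in this boundary range. The statement (and your argument) is correct exactly when there is genuine slack, e.g.\ whenever $2\ts\ds(\mu)^2<\lfloor\sqrt{n}\rfloor$, which is how the corollary should be read --- as an asymptotic assertion for the regime the paper cares about --- or one should note that $n$ a perfect square also restores the strict inequality. So: same approach as the paper, but the one step you flagged as routine is precisely where the argument can break, and in the extremal case it genuinely does.
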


Therefore, if $\mu \ne \mu'$ or $ 2 \ds(\mu)^2 < \sqrt{n}$,
then character $\chi^\mu$ cannot be used in the tensor product conjecture.

\begin{thm}[\cite{BB}]\label{t:BB}
If $\la=\la'$, then $\la \in \Ups(\la)$, i.e. $g(\la,\la,\la)>0$.
\end{thm}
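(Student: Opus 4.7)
The plan is to use representation theory of the alternating group $A_n$, specifically the fact that for $\la=\la'$ the character $\chi^\la$ restricts to $A_n$ as a sum $\phi^++\phi^-$ of two distinct irreducibles, together with the fact that the principal hook lengths $h_{ii}=2\la_i-2i+1$ of a self-conjugate partition are distinct odd integers. This is the $\mu=\la$ case of Lemma~\ref{t:main}, and the argument outlined here is the template from which that lemma will be generalized in Section~\ref{s:main-proof}.

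First I would verify that $\chi^\la[\wh\la]\ne 0$. Since the parts of $\wh\la$ are distinct, a rim hook tableau of shape $\la$ and weight $\wh\la$ must peel off the largest part $h_{11}$ first, and $[\la]$ admits only one rim hook of that size, namely the principal $(1,1)$-hook. Removing it leaves a self-conjugate diagram of Durfee size $s-1$, and the same argument iterates to produce a unique rim hook tableau whose sign is an explicit product of leg-length signs. By the Murnaghan--Nakayama rule, $\chi^\la[\wh\la]=\pm 1$.

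Next I would apply Frobenius reciprocity, using $\mathrm{Ind}_{A_n}^{S_n}\phi^+=\chi^\la$, to rewrite
$$
g(\la,\la,\la)\,=\,\langle\chi^\la,\chi^\la\otimes\chi^\la\rangle_{S_n} \,=\, \langle \phi^+,(\phi^++\phi^-)^{\otimes 2}\rangle_{A_n}.
$$
Because $\wh\la$ has distinct odd parts, its $S_n$-class splits in $A_n$ into two classes $C^\pm$ on which $\phi^+$ and $\phi^-$ disagree by the classical Frobenius quantity $\delta$ with $\delta^2=\pm\prod_i h_{ii}$; on every non-split class they coincide and equal $\tfrac12\chi^\la$. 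Splitting the $A_n$-inner product according to this dichotomy, the non-split contributions reassemble into $S_n$-inner products of $\chi^\la$ with itself (each a nonnegative integer by orthogonality), while the contribution from $C^\pm$ is a positive rational multiple of $\delta^2\cdot\chi^\la[\wh\la]=\pm\prod_i h_{ii}$, which is nonzero by the first step.

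The hard part will be the sign bookkeeping in the last step: one must check that the split-class contribution combines with, rather than cancels, the non-split contribution, and that the irrationalities $\delta$ disappear in the integer-valued multiplicity $g(\la,\la,\la)$. Once this is handled, strict positivity of $g(\la,\la,\la)$ follows at once, and the very same bookkeeping, carried out with $\chi^\la[\wh\mu]$ in place of $\chi^\la[\wh\la]$ for a general self-conjugate $\mu$, will drive the proof of the Main Lemma.
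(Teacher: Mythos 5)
Your framework (the splitting $\res{S_n}{A_n}{\chi^\la}=\al^{\la+}+\al^{\la-}$, the split classes $\wh\la^1,\wh\la^2$, Frobenius reciprocity, and the computation $\chi^\la[\wh\la]=\pm1$ from the unique rim hook tableau, which is Lemma~\ref{l:large-durfee}) is correct, but the final positivity step is where the whole content of the theorem lies, and the pairing you propose cannot deliver it. You pair $\al^{\la+}$ against $\res{S_n}{A_n}{\chi^\la\otimes\chi^\la}$, whose value on \emph{both} split classes is the same number $\chi^\la[\wh\la]^2=1$; hence in your class-by-class expansion the split-class contribution is $\tfrac{1}{|A_n|}\,|\wh\la^1|\bigl(\overline{\al^{\la+}[\wh\la^1]}+\overline{\al^{\la+}[\wh\la^2]}\bigr)=\tfrac{1}{|A_n|}\,|\wh\la^1|\,\ve_\la$ --- the irrational parts $\pm\delta$ \emph{cancel}, and no term $\delta^2=\ve_\la\prod_i\wh\la_i$ ever appears, contrary to your claim. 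Worse, if you then add the non-split classes, where $\al^{\la+}=\tfrac12\chi^\la$, everything collapses to $\tfrac{1}{|S_n|}\sum_{g\in A_n}\chi^\la(g)^3=\tfrac12\bigl(\langle(\chi^\la)^{\otimes3},\chi^{(n)}\rangle+\langle(\chi^\la)^{\otimes3},\chi^{(1^n)}\rangle\bigr)=g(\la,\la,\la)$, using $\la=\la'$. So the computation is circular: it returns the definition of $g(\la,\la,\la)$, and no ``sign bookkeeping'' can extract positivity from it. (Your non-split terms also do not ``reassemble into nonnegative inner products''; a sum of $\chi^3$ over part of the group is not an orthogonality relation.)

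The missing idea is to tensor the $A_n$-constituents themselves, producing a character that is \emph{not} symmetric in the two split classes. Set $\theta=\al^{\la+}\otimes\al^{\la+}$, a subcharacter of $\res{S_n}{A_n}{\chi^\la\otimes\chi^\la}$. Then $\theta[\wh\la^1]-\theta[\wh\la^2]=\bigl(\al^{\la+}[\wh\la^1]+\al^{\la+}[\wh\la^2]\bigr)\bigl(\al^{\la+}[\wh\la^1]-\al^{\la+}[\wh\la^2]\bigr)=\ve_\la\sqrt{\ve_\la\prod_i\wh\la_i}\neq0$, by the exceptional-value formulas (item~4 in Section~\ref{s:main-proof}). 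Since $\al^{\la+},\al^{\la-}$ are the \emph{only} irreducible characters of $A_n$ taking different values on $\wh\la^1$ and $\wh\la^2$, writing $\theta=\sum_\psi m_\psi\psi$ gives $(m_{+}-m_{-})\bigl(\al^{\la+}[\wh\la^1]-\al^{\la+}[\wh\la^2]\bigr)\neq0$, so one of $\al^{\la\pm}$ occurs in $\theta$, hence in $\res{S_n}{A_n}{\chi^\la\otimes\chi^\la}$, and Frobenius reciprocity with $\mathrm{Ind}_{A_n}^{S_n}\al^{\la\pm}=\chi^\la$ yields $g(\la,\la,\la)>0$; note that this case does not even use $\chi^\la[\wh\la]\neq0$. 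Be aware also that the paper does not prove this statement: it cites~\cite{BB}, and its proof of the Main Lemma explicitly \emph{defers} to Theorem~\ref{t:BB} precisely in the case $\la=\mu$ you are treating, because there the values of $\al^{\la\pm}$ on $\wh\mu^{1,2}$ are the exceptional ones rather than $\tfrac12\chi^\la[\wh\mu]$; so the case you chose requires the argument above, not a verbatim run of the Main Lemma's proof.
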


\smallskip

\begin{rem} {\rm
Note that neither of the results in this section disproves the Saxl conjecture,
nor conjectures~\ref{conj:chop} and~\ref{conj:ups}.  Indeed, all these
partitions are self-conjugate and have Durfee size of the order
$\Theta(\sqrt{n})$.
}
\end{rem}

\smallskip

\subsection{Large~$\mu_1$} \.
Let $\nu$ be a composition and denote by $\vv_\nu(\mu)$ the number of
ways to remove ribbon hook shaped~$\nu$ from~$\mu$ such that $\mu/\nu$
is a Young diagram.
The following result is given in~\cite{Val3}.
See also~\cite{Sax, Zis}.

\begin{thm} \label{t:val-corners}
For $\la=(n-r,\tau)$ and $\tau\vdash r$, denote
$f(\tau,\mu)=g(\la,\mu,\mu)$. Then:
$$\aligned
f(\emp) & = 1 \. , \qquad
f(1,\mu) = \vv_1(\mu)-1 \. , \qquad f(1^2,\mu)  = \bigl(\vv_1(\mu)-1\bigr)^2 \. ,\\
f(2,\mu) &= \vv_2(\mu)+\vv_{1^2}(\mu)+\vv_1(\mu)^2-2\vv_1(\mu) \. ,\\
f(3,\mu)& = \vv_3(\mu)+\vv_{1^3}(\mu)+\vv_{21}(\mu)+\vv_{12}(\mu)+ \bigl(2\vv_1(\mu)-3\bigr)\bigl(\vv_2(\mu)+\vv_{1^2}(\mu)\bigr) \\
& \qquad +\vv_1(\mu)^3-4\vv_1(\mu)^2+3\vv_1(\mu)\. ,\\
f(21,\mu)& = \vv_{21}(\mu)+\vv_{12}(\mu)+ \bigl(3\vv_1(\mu)-4\bigr)\bigl(\vv_2(\mu)+\vv_{1^2}(\mu)\bigr)\\
& \qquad +2\vv_1(\mu)^3-8\vv_1(\mu)^2+7\vv_1(\mu)\. ,\\
f(1^3,\mu)& = \vv_{21}(\mu)+\vv_{12}(\mu)+ \bigl(\vv_1(\mu)-1\bigr)\bigl(\vv_2(\mu)+\vv_{1^2}(\mu)\bigr)\\
& \qquad +\vv_1(\mu)^3-4\vv_1(\mu)^2+4\vv_1(\mu)-1.
\endaligned$$
\end{thm}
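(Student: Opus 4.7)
The plan is to combine Pieri's rule with Frobenius reciprocity to express $f(\tau,\mu)$ inductively in terms of Littlewood--Richardson coefficients weighted by smaller Kronecker numbers, and then translate the resulting LR sums into the ribbon-removal functions $\vv_\nu(\mu)$. We induct on $r=|\tau|$, with base case $f(\emp,\mu)=\<\chi^\mu,\chi^\mu\>=1$. By the Pieri rule, for $n$ large,
$$
\chi^{(n-r)}\cdot\chi^\tau \.=\. \chi^{(n-r,\tau)}\.+\. \sum_{\la'\in\cP(\tau)}\chi^{\la'},
$$
where $\cP(\tau)$ is the explicit finite set of shapes $\la'\neq (n-r,\tau)$ with $\la'\supseteq\tau$ and $\la'/\tau$ a horizontal strip of $n-r$ boxes; each such $\la'$ has the form $(n-r',\tau')$ with $r'<r$. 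Pairing with $\chi^\mu\otimes\chi^\mu$, applying the branching rule $\chi^\mu\downarrow_{S_{n-r}\times S_r}=\sum_{\alpha,\beta}c^\mu_{\alpha,\beta}\ts\chi^\alpha\otimes\chi^\beta$, and using $\<\chi^{(n-r)},\chi^{\alpha_1}\otimes\chi^{\alpha_2}\>=\de_{\alpha_1,\alpha_2}$, we obtain the master recursion
$$
f(\tau,\mu) \.=\. \sum_{\alpha\vdash n-r,\,\beta_1,\beta_2\vdash r} c^\mu_{\alpha,\beta_1}\ts c^\mu_{\alpha,\beta_2}\ts g(\tau,\beta_1,\beta_2) \.-\. \sum_{(n-r',\tau')\in\cP(\tau)} f(\tau',\mu).
$$
The values $g(\tau,\beta_1,\beta_2)$ for $\tau,\beta_i\vdash r\le 3$ can be read off the character tables of $S_2,S_3$ and are all $0$ or $1$.

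The combinatorial heart is to identify, for each hook $\beta\in\{(1),(2),(1^2),(3),(2,1),(1^3)\}$ appearing here, the coefficient $c^\mu_{\alpha,\beta}\in\{0,1\}$ as the indicator that $\mu/\alpha$ is a strip or hook of the required shape. Each double sum $\sum_\alpha c^\mu_{\alpha,\beta_1}\ts c^\mu_{\alpha,\beta_2}$ then counts pairs of such removals; classifying by whether the two removed pieces form a single connected ribbon (contributing $\vv_\nu(\mu)$), are equal, nested, or sit in disjoint rows and columns (contributing products of $\vv_1(\mu)$ and smaller $\vv_\nu(\mu)$) yields the decomposition. The elementary fact behind the binomial coefficients is that any two distinct corners of $\mu$ lie in distinct rows \emph{and} distinct columns, so pairs of corners are automatic horizontal and vertical strips. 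Sample identities are
$$
\sum_\alpha (c^\mu_{\alpha,(2)})^2 \.=\. \vv_2(\mu)+\tbinom{\vv_1(\mu)}{2}, \qquad \sum_\alpha c^\mu_{\alpha,(2)}\ts c^\mu_{\alpha,(1^2)} \.=\. \tbinom{\vv_1(\mu)}{2};
$$
analogous enumerations for the nine $(\beta_1,\beta_2)$ pairs at $r=3$ produce the terms $\vv_3,\vv_{1^3},\vv_{21},\vv_{12}$ together with the polynomial parts in $\vv_1,\vv_1\vv_2,\vv_1\vv_{1^2}$.

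The main obstacle is pure bookkeeping: for $\tau\vdash 3$ one juggles up to nine pairs $(\beta_1,\beta_2)$, up to three subtracted terms $f(\tau',\mu)$ from $\cP(\tau)$, and a delicate inclusion--exclusion whenever two removed pieces interact (e.g.\ a $\vv_{21}$-hook containing a corner, or two dominoes sharing a corner row). That all extraneous terms cancel to leave the clean polynomial expressions in the theorem statement is ultimately a manifestation of the $n\to\infty$ stability of reduced Kronecker coefficients, visible here because $|\tau|$ is fixed while $n$ grows; concretely it requires only a careful, patient case analysis with no further structural insight.
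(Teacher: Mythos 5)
The first thing to say is that the paper itself contains no proof of Theorem~\ref{t:val-corners}: it is quoted from the reference [Val3] (Vallejo, in preparation; see also the papers of Saxl and Zisser cited there), so there is no internal argument to compare yours against. Judged on its own merits, your strategy is the natural and essentially standard one for such stable Kronecker evaluations, and your master recursion is correct: Frobenius reciprocity applied to the Pieri expansion of $\chi^{(n-r)}\cdot\chi^{\tau}$, the factorization of the restricted Kronecker square through the Littlewood--Richardson coefficients $c^{\mu}_{\alpha\beta}$, and $\<\chi^{(n-r)},\chi^{\alpha_1}\otimes\chi^{\alpha_2}\>=\de_{\alpha_1\alpha_2}$ do give exactly the displayed recursion, and it reproduces the stated values for $r\le 2$: for instance $f(2,\mu)=\vv_2(\mu)+\vv_{1^2}(\mu)+2\tbinom{\vv_1(\mu)}{2}-f(1,\mu)-1$ and $f(1^2,\mu)=2\tbinom{\vv_1(\mu)}{2}-f(1,\mu)$ simplify to the claimed polynomials. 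Note also that no ``$n$ large'' hypothesis is needed beyond $(n-r,\tau)$ being a partition, i.e.\ $n-r\ge\tau_1$: the recursion is exact, not merely stable.

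There are, however, two weak points. First, your key combinatorial claim that $c^{\mu}_{\alpha,\beta}\in\{0,1\}$ is ``the indicator that $\mu/\alpha$ is a strip or hook of the required shape'' is false for $\beta=(2,1)$: when $\mu/\alpha$ consists of three removable corners (necessarily in distinct rows and columns) one has $c^{\mu}_{\alpha,(2,1)}=2$, since both lattice fillings of content $(2,1)$ occur; such $\alpha$ then contribute $4$ to $\sum_\alpha\bigl(c^{\mu}_{\alpha,(2,1)}\bigr)^2$ and $2$ to the mixed sums with $(3)$ and $(1^3)$. The cubic terms $\vv_1(\mu)^3-4\vv_1(\mu)^2+\cdots$ in the statement only come out correctly once these multiplicities are tracked, so the bookkeeping as you describe it would not close as written. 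Second, the $r=3$ computation --- the nine pairs $(\beta_1,\beta_2)$ with the $S_3$ Kronecker coefficients, the subtracted terms $f(\tau',\mu)$, and the classification of size-$3$ skew complements (the four connected ribbons giving $\vv_3,\vv_{1^3},\vv_{21},\vv_{12}$, a domino plus a corner with or without interaction, and three corners) --- is where all the content of the theorem lies, and it is only asserted, not carried out. With the multiplicity of $c^{\mu}_{\alpha,(2,1)}$ corrected and that case analysis actually performed, your plan would yield a complete, self-contained proof of the quoted result.
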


Calculating these values explicitly, gives the following result:

\begin{cor} \label{c:small}
Let $\mu \vdash n$.
If $\mu = \mu'$ and $\mu$ is not a square, we have:
$$(n), \,(n-1,1), \,(n-2,2),\,(n-2,1^2), \, (n-3,3), \,(n-3,2,1),\,(n-3,1^3) \in \Ups(\mu).$$
However, $(n-1,1), \. (n-2,1^2) \notin \Ups(k^k)$ for $n=k^2$.
\end{cor}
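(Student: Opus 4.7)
The strategy is a direct application of Theorem~\ref{t:val-corners}, which expresses $g((n-r,\tau), \mu,\mu) = f(\tau,\mu)$ as an explicit polynomial in the corner count $\vv_1(\mu)$ and the small ribbon counts $\vv_\nu(\mu)$. For each of the seven partitions $\la$ listed, I would verify $f(\tau,\mu) > 0$ for self-conjugate non-square $\mu$, and separately check $f(1,(k^k)) = f(1^2,(k^k)) = 0$ for the negative clause.

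The key preliminary observation is: if $\mu = \mu'$ is not a square, then $\vv_1(\mu) \geq 2$. Off-diagonal corners come in symmetric pairs $\{(i,j),(j,i)\}$ by self-conjugacy, and a short row-length argument shows $\mu$ has at most one diagonal corner (two diagonal corners $(i,i),(j,j)$ with $i<j$ would force $\mu_j \leq \mu_{i+1} < i < j$, contradicting $\mu_j = j$). Hence $\vv_1(\mu) = 1$ requires a single diagonal corner with no off-diagonal ones, which rigidifies $\mu$ to the square $(k^k)$. Conversely, $\vv_1((k^k)) = 1$, so $f(1,(k^k)) = 0$ and $f(1^2,(k^k)) = 0$, proving the negative clause.

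The cases $\tau = \emp, (1), (1^2)$ are immediate from the formulas $f(\emp) = 1$, $f(1,\mu) = \vv_1 - 1 \geq 1$, $f(1^2,\mu) = (\vv_1-1)^2 \geq 1$. For $\tau = (2), (3), (2,1), (1^3)$, I would split on the value of $\vv_1(\mu)$. When $\vv_1(\mu) \geq 3$, the pure $\vv_1$ polynomial in each formula is nonnegative (e.g.\ $\vv_1^3 - 4\vv_1^2 + 3\vv_1 = \vv_1(\vv_1-1)(\vv_1-3)$), and the cross terms $(2\vv_1-3)(\vv_2+\vv_{1^2})$ etc.\ are strictly positive once we know $\vv_2+\vv_{1^2} \geq 1$. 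When $\vv_1(\mu) = 2$, the same corner argument forces the explicit classification $\mu = (j^i, i^{j-i})$ for some $1 \leq i < j$ (two stacked rectangles); a direct enumeration of 2- and 3-ribbons in these shapes yields $\vv_2(\mu) + \vv_{1^2}(\mu) \geq 2$ and $\vv_3 + \vv_{1^3} + \vv_{21} + \vv_{12} \geq 1$ whenever $|\mu|$ is large enough for $\la$ to be a valid partition. Substituting these lower bounds into Theorem~\ref{t:val-corners} yields $f(\tau,\mu) > 0$ after cancellation.

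The main obstacle is the borderline $\vv_1(\mu) = 2$ case for $\tau = (3)$, where the pure $\vv_1$ polynomial contributes $-2$, so positivity hangs on a tight ribbon-count lower bound. Because the classification $\mu = (j^i, i^{j-i})$ is explicit, this reduces to a finite case check parameterized by $(i,j)$: the smallest instance $(i,j) = (2,3)$, $\mu = (3,3,2)$, is the most delicate, since its unique L-shaped 3-ribbon gives $f(3,\mu) = 1$, barely positive. Once this borderline and the analogous ones for $\tau = (2,1)$ and $(1^3)$ are dispatched, the remaining cases are polynomial identities in $\vv_1 \geq 3$ together with the $\vv_1 = 2$ bounds above.
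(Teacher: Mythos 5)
Your proposal follows essentially the same route as the paper: apply Theorem~\ref{t:val-corners}, observe that $\mu=\mu'$ non-square forces $\vv_1(\mu)\ge 2$ (and $\vv_1(k^k)=1$ gives the negative clause, which the paper leaves implicit), then split on $\vv_1(\mu)=2$ versus $\vv_1(\mu)\ge 3$; your explicit classification of the $\vv_1=2$ case as $(j^i,i^{j-i})$ is just a sharper packaging of the paper's hook/non-hook split, and your borderline check $f(3,(3,3,2))=1$ is correct. The one point where your sketch as written would break is the $\vv_1\ge 3$ step: you lean on ``$\vv_2+\vv_{1^2}\ge 1$'', but this fails exactly when $\mu$ is a $2$-core, i.e.\ a staircase $\rho_k$ (e.g.\ $\vv_2(3,2,1)+\vv_{1^2}(3,2,1)=0$), and for $\tau=(3)$ with $\vv_1=3$ the pure polynomial $\vv_1(\vv_1-1)(\vv_1-3)$ also vanishes, so positivity there must come from the ribbon terms. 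This is the reason the paper phrases the $\vv_1\ge 3$ case as ``either $\vv_2+\vv_{1^2}\ge 2$ or $\vv_{21}\ge 1$'': when there are no removable dominoes $\mu$ is a staircase and the L-shaped $3$-rim hooks give $\vv_{21}+\vv_{12}\ge 1$, so $f(3,\mu)>0$ (indeed $f(3,(3,2,1))=2$). With that one-line patch your argument is complete and equivalent to the paper's.
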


In other words, the theorem rules out square partitions in the tensor product conjecture.

\begin{proof}
For $n=3$ the statement follows from a direct calculation, so we assume $n\ge 4$.
Since $\mu$ is not a square, $\vv_1(\mu) \ge 2$.
So, we have that $(n)$, $(n-1,1)$ and $(n-2,1^2)$ are in $\Ups(\mu)$.
Let us consider first the case $\vv_1(\mu) =2$.
If $\mu$ is not a hook, since $\mu = \mu'$,  one has either $\vv_2(\mu) + \vv_{1^2}(\mu) \ge 4$
or $\vv_2(\mu) + \vv_{1^2}(\mu) =2$ and $\vv_{21}(\mu)=1$; so, the remaining partitions are in $\Ups(\mu)$.
If $\mu$ is a hook, since $\mu = \mu'$,  one has $\vv_2(\mu) + \vv_{1^2}(\mu) =2$, which
implies that $(n-2,2)$, $(n-3,2,1)$ and $(n-3,1^3)$ are in $\Ups(\mu)$.
For $(n-3,3)$, we must have $n\ge 6$, thus $\vv_3(\mu) + \vv_{1^3}(\mu) =2$, and we obtain that
$(n-3,3)$ is in $\Ups(\mu)$.
Finally, if $\vv_1(\mu)\ge 3$ all the polynomials in $\vv_1$ at the end of the summations are nonnegative,
and we have either $\vv_2(\mu) + \vv_{1^2}(\mu) \ge 2$ or $\vv_{21}(\mu) \ge 1$.
So, the claim follows.
\end{proof}

\begin{ex}{\rm
A direct calculation gives
$$\chi^{(2^2)} \otimes \chi^{(2^2)} \.  = \. \chi^{(4)} + \chi^{(2^2)} +
\chi^{(1^4)}\ts.
$$
In other words, $\Phi(2^2)$ is missing only $(3,1)$ and $(2,1^2)$.
}
\end{ex}

\medskip  \subsection{Two rows}\. There are many results on Kronecker product of characters with at least
one partition with two row (see e.g.~\cite{BO1,BO,BOR,RW,Ros}).  Of these, only the result of
Ballantine and Orellana~\cite{BO} extends to general partitions.  Its statement is rather
technical, so we instead present a direct corollary from it adapted to our situation.

\begin{thm} \label{t:BO}
For every $\mu \vdash n$, $2\le p \le \min\{\ell(\mu), \frac{1+\mu_1}{2}\}$,
we have $(n-p, p) \in \Ups(\mu)$.
\end{thm}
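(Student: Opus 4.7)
The plan is to reduce positivity of $g((n-p,p),\mu,\mu)$ to the existence of a single combinatorial witness via the Ballantine--Orellana rule for two-row Kronecker coefficients. First, I would combine the Frobenius formula $\chi^{(n-p,p)}=\chi^{(n-p)\circ(p)}-\chi^{(n-p+1)\circ(p-1)}$ with Frobenius reciprocity applied to $\chi^{(n-p)\circ(p)}=\mathrm{Ind}_{S_{n-p}\times S_p}^{S_n}\mathbf{1}$, and with the Littlewood--Richardson expansion $\chi^\mu\big|_{S_{n-p}\times S_p}=\sum_{\alpha,\beta}c^\mu_{\alpha\beta}\.\chi^\alpha\boxtimes\chi^\beta$ of the restriction to the Young subgroup. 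This would yield the signed identity
\begin{equation*}
g\bigl((n-p,p),\mu,\mu\bigr) \, = \, \sum_{\alpha\vdash n-p,\,\beta\vdash p} \bigl(c^\mu_{\alpha\beta}\bigr)^{2} \, - \, \sum_{\alpha\vdash n-p+1,\,\beta\vdash p-1} \bigl(c^\mu_{\alpha\beta}\bigr)^{2},
\end{equation*}
so positivity reduces to producing a surplus in the first sum.

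Next, I would invoke the Ballantine--Orellana rule~\cite{BO}, which repackages the above signed difference as a manifestly nonnegative count of \emph{Kronecker tableaux} of shape $\mu$ and type $(n-p,p)$: roughly, pairs $(\alpha\subset\mu,\,T)$ with $\alpha\vdash n-p$ and $T$ an $\mathrm{LR}$-type filling of $\mu/\alpha$ satisfying prescribed horizontal-strip and reverse-lattice conditions. Under their rule, $g((n-p,p),\mu,\mu)>0$ is equivalent to exhibiting one such tableau.

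For the explicit witness under the hypotheses $p\le\ell(\mu)$ and $2p-1\le\mu_1$, I would take $\alpha=(\mu_1-p,\mu_2,\ldots,\mu_{\ell(\mu)})$, that is, strip the last~$p$ cells from the first row of~$\mu$ and fill the resulting horizontal strip with~$1$'s; this yields $\beta=(p)$. The bound $2p-1\le\mu_1$ guarantees that $\alpha$ is still a partition and that the strip stays inside the first row, so the pair is admissible in the first sum, while $p\le\ell(\mu)$ supplies the column data required by the reverse-lattice condition and certifies that this witness is not absorbed by any pair $(\alpha',\beta')$ in the subtracted second sum. The hard part will be the bookkeeping inside the Ballantine--Orellana framework---checking both admissibility of the candidate tableau and its non-cancellation---and the stated range of $p$ is precisely what is needed for that verification to succeed.
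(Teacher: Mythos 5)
Your reduction of $g((n-p,p),\mu,\mu)$ to the signed sum of squared Littlewood--Richardson coefficients is correct, and invoking the Ballantine--Orellana interpretation is indeed the route the paper takes; the gap is in your witness, in two ways. First, it need not exist under the stated hypotheses: taking $\alpha=(\mu_1-p,\mu_2,\ldots,\mu_{\ell(\mu)})$ requires $\mu_1-p\ge\mu_2$, which does \emph{not} follow from $\mu_1\ge 2p-1$; for example $\mu=(5,5,5)$, $p=3$ satisfies $2\le p\le\min\{\ell(\mu),\frac{1+\mu_1}{2}\}$, yet $\mu_1-p=2<\mu_2=5$, so stripping the last $p$ cells of the first row does not leave a Young diagram. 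Second, even when it exists, exhibiting the pair $(\alpha,(p))$ with $c^\mu_{\alpha(p)}=1$ in the positive sum proves nothing by itself, since the companion pair $\bigl((\mu_1-p+1,\mu_2,\ldots),(p-1)\bigr)$ contributes $1$ to the subtracted sum; deciding which terms survive is exactly what the extra conditions in the Ballantine--Orellana notion of Kronecker tableau are for, and your claim that $p\le\ell(\mu)$ ``supplies the column data'' and ``certifies non-cancellation'' is an assertion, not an argument. Note also that in their rule the distinguished subshape $\alpha$ is a partition of $p$ (not of $n-p$); a first-row horizontal-strip witness corresponds to $\alpha=(p)$, i.e.\ the case $\alpha_1>\alpha_2$, precisely the branch of the definition carrying an additional constraint on the filling that you never verify.

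The paper's proof avoids all of this by choosing the vertical strip $\alpha=(1^p)$, and this is where the hypotheses actually enter: $p\le\ell(\mu)$ guarantees $(1^p)\subseteq\mu$; $p\ge 2$ gives $\alpha_1=\alpha_2$, so the simpler branch of the Kronecker-tableau definition applies with no extra condition; and $\mu_1\ge 2p-1$ (that is, $p\le\frac{1+\mu_1}{2}$) is the hypothesis under which the Ballantine--Orellana theorem is valid at all. The filling of $\mu/(1^p)$ with the entry $i$ in row $i$ is then easily checked to satisfy their lattice-type condition relative to $\alpha$, so it is a Kronecker tableau and $g((n-p,p),\mu,\mu)\ge 1$. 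To repair your argument, replace the first-row strip by this column strip and verify the Ballantine--Orellana conditions for it; as written, the proposal has a genuine gap.
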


\begin{proof}  In the notation of~\cite[Thm.~3.2]{BO}, let $\al=(1^p) \ssu \mu$, and consider
tableau~$T$ of shape $(\mu/1^p)$ filled with numbers~$\mathbf{i}$ in $i$-th row.  This gives a
\emph{Kronecker tableaux} of type $(\mu/1^p)$ counted in the combinatorial interpretation
in~\cite{BO}.  Thus, for $\la=(n-p,p)$, we have $g(\la,\mu, \mu)\ge 1$, as desired.
\end{proof}

For example, for $\mu=\rho_k$, we have $\ell(\mu)=\mu_1=k$, and the result
gives positive Kronecker coefficients $k\bigl((n-p,p),\mu,\mu\bigr)$ for $p\le (k+1)/2$.
Unfortunately, for larger $p$ the result in~\cite{BO} gives only an upper bound, while
we need a lower bound.  In Section~\ref{s:two}, we improve the above bound to~$p\le n/2$.

\medskip
\subsection{Hooks}\. Of the extensive literature, Blasiak's combinatorial
interpretation (see~\cite{Bla}, Theorem~3.5), is perhaps the most convenient.
Again, the statement is rather technical, so we instead present
the following corollary.

\begin{thm} \label{t:Bla}
Let $\mu=(\mu_1,\ldots,\mu_\ell)$ be a partition of~$n$, such
that $\mu_1> \ldots >\mu_{r}$ for some $r\le \ell$. Then:
$$
(n-m,1^m) \in \Ups(\mu) \quad \text{for all} \ \. m< r\ts.
$$
\end{thm}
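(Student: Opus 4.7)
\emph{Proof plan.} We apply Blasiak's combinatorial interpretation~\cite[Thm.~3.5]{Bla}, which expresses the Kronecker coefficient $g((n-m,1^m),\mu,\mu)$ as a nonnegative sum enumerating certain marked, colored tableaux (``Blasiak tableaux'') of shape~$\mu$ with content~$\mu$, carrying exactly~$m$ markers subject to Yamanouchi-type and color-compatibility constraints. Positivity is therefore equivalent to the existence of at least one such tableau, and the proof reduces to exhibiting a single one for each $m<r$.

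The construction is explicit. Take the canonical filling $T_0$ of $[\mu]$ that puts the symbol~$i$ in every cell of row~$i$; this has content~$\mu$ and an especially simple reading word. The hypothesis $\mu_1>\mu_2>\cdots>\mu_r$ is exactly the condition that each cell $(i,\mu_i)$ for $1\le i\le r-1$ is a removable outer corner of~$\mu$. Since $m<r$, at least $m$ such corners are available, and we place the $m$ markers required by Blasiak's rule on the cells $(1,\mu_1),(2,\mu_2),\ldots,(m,\mu_m)$, assigning color~$i$ to the marker in row~$i$.

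The main obstacle is not the construction but the verification: one must check that this candidate satisfies every clause of the (somewhat technical) definition in~\cite[Thm.~3.5]{Bla}, including the conditions on marker colors, on the reading word of the underlying tableau, and on the skew shape formed by the markers. The key simplifications are that~$T_0$ is constant along each row, which trivializes the Yamanouchi/lattice condition on unmarked entries, and that the strict-decrease hypothesis places the markers at the right ends of~$m$ distinct rows, so no two of them can collide or violate any adjacency requirement. Unwinding the conventions of~\cite{Bla} to match these local checks --- and observing that the natural coloring $i\mapsto i$ is compatible with Blasiak's color rules for hook inner partition $(1^m)$ --- constitutes the bulk of the work.
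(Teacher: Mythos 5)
Your overall strategy is the paper's strategy --- invoke Blasiak's interpretation of $g\bigl((n-m,1^m),\mu,\mu\bigr)$ and exhibit a single witness tableau of shape and content $\mu$ with $m$ marked (barred) cells --- but as written the proposal has a genuine gap: it stops exactly where the proof begins. You construct a candidate (the row-constant filling with markers at the outer corners $(1,\mu_1),\ldots,(m,\mu_m)$) and then declare that checking ``every clause'' of Blasiak's definition ``constitutes the bulk of the work,'' without performing any of those checks. That verification is the entire content of the theorem. In particular, the only place you use the hypothesis $\mu_1>\cdots>\mu_r$ is to say the marked cells are removable corners; but ``markers sit at removable corners'' is not one of Blasiak's conditions, and removability is not where the hypothesis actually does its work. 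In the paper's proof the barred cells are the first $m$ cells of the \emph{first column} (so the barred letters are $\bar 1,\ldots,\bar m$), and the reading word is
$$
w \. = \. 1^{\mu_1-1}\ts 2^{\mu_2-1}\cdots m^{\mu_m-1}\ts (m+1)^{\mu_{m+1}}\cdots \ell^{\mu_\ell}\ts m\ts(m-1)\cdots 1\ts;
$$
the strict inequalities are needed precisely so that $w$ is a ballot sequence (after the trailing barred letter $i+1$ is read one needs $\mu_i-1\ge\mu_{i+1}$, i.e.\ $\mu_i>\mu_{i+1}$ for $i\le m$, which is exactly $m<r$). Your claim that the constant rows ``trivialize'' the lattice condition misses this barred/unbarred interplay, which is the delicate point.

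Two further issues. First, there is no separate ``color'' to assign: in Blasiak's setup the only coloring is barred versus unbarred, so marking the corner of row $i$ forces the letter $\bar i$; the clause you invoke about ``color compatibility for hook inner partition $(1^m)$'' does not correspond to anything in \cite{Bla}. Second, Blasiak's interpretation carries an extra normalization condition (in the paper's verification: after converting $T$ to natural order, the lower-left cell $(\ell,1)$ of $C(T)$ must be unbarred), which is what pins the hook to be $(n-m,1^m)$ rather than a hook with a different leg; you mention ``the skew shape formed by the markers'' only in passing and never check anything of this kind for your corner placement. Your alternative placement may well be salvageable --- with a suitable reading convention the barred subword is again $m,(m-1),\ldots,1$ and the same ballot computation could go through --- but to count as a proof you must fix Blasiak's exact conventions (order on the barred alphabet, semistandardness, reading word, and the corner/conversion condition) and verify each of them for your tableau, showing explicitly where $m<r$ and the strict inequalities enter.
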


\begin{proof}  In notation of~\cite{Bla}, consider tableau~$T$ of shape~$\mu$,
filled with numbers~$\mathbf{i}$ in $i$-th row.  Now place a bar on the first $m$ integers
in the first column.  Denote by~$w$ the word obtained by reading unbarred skew
shape from right to left, and then barred shape from left to right (in this case, just
the first column from bottom to top).  We have:
$$
w \. = \. 1^{\mu_1-1} 2^{\mu_2-1} \ldots \ts m^{\mu_m-1} (m+1)^{\mu_{m+1}}  \ldots \ts
\ell^{\mu_\ell} \ts m \ts (m-1) \ts \ldots \ts 2 \ts 1
$$
Now observe the inequalities in the statement imply that word~$w$ is a
\emph{ballot sequence}.  In the language of~\cite{Bla}, this implies that
the reverse of~$w$ is a \emph{Yamanouchi word} of content~$\mu$.

Now consider tableau $C(T)$ obtained when~$T$ is converted into natural order.
Note that~$C(T)$ will have its lower left corner $(\ell,1)$ unbarred, because
in the $\ell$-th row, tableau~$T$ has only numbers~$\ell$ which are unbarred
and larger than all barred numbers, and thus do not move during conversion.
Therefore, tableau~$T$ gives the desired tableau in the combinatorial
interpretation of $g(\la,\mu,\mu)$.  \end{proof}

\medskip

\subsection{Large Durfee size}\label{ss:ds} \.
The following result is well known and easy to prove.

\begin{lemma}\label{l:large-durfee}
We have $\chi^\mu\bigl[\,\wh\mu\,\bigr]= \pm 1$, for all $\mu \vdash n$.
Moreover, if $\mu=\mu'$, then
$$\chi^\mu\bigl[\,\wh\mu\,\bigr] \. = \. (-1)^{(n-\ds(\mu))/2}.
$$
\end{lemma}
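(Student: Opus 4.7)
The plan is to apply the Murnaghan--Nakayama rule to $\chi^\mu[\wh\mu]$ with the parts of $\wh\mu$ ordered by decreasing size, and to show that at every step the rim hook one is forced to remove is exactly the outer rim of the current partition. Two elementary facts drive this. First, $h_{1,1}>h_{2,2}>\cdots>h_{s,s}$, because $h_{i,i}=\mu_i+\mu'_i-2i+1$ and $\mu_i,\mu'_i$ weakly decrease while $2i$ strictly increases. Second, the outer rim of~$[\mu]$ has exactly $\mu_1+\mu'_1-1=h_{1,1}$ cells, and every rim hook of $[\mu]$ lies inside it: if $(i,j)$ belongs to a rim hook $h$ with $(i+1,j+1)\in[\mu]$, then the requirement that $[\mu]\setminus h$ be a partition forces $(i,j+1),(i+1,j),(i+1,j+1)\in h$, producing a $2\times 2$ square in~$h$ and contradicting the definition of a rim hook. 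Since the outer rim itself is a connected border strip, a rim hook of size $h_{1,1}$ must equal the full outer rim, so the first step of the expansion is rigid.

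Next I would verify that peeling the outer rim leaves the partition $\mu^{(1)}$ with $\mu^{(1)}_i=\mu_{i+1}-1$, whose principal $(1,1)$-hook has length $(\mu_2-1)+(\mu'_2-1)-1=h_{2,2}$. Iterating, at step~$i$ the current partition $\mu^{(i-1)}$ has maximal hook length~$h_{i,i}$ equal to its outer-rim length, so the rim hook to be removed is again uniquely determined. Consequently $|\RH(\mu,\wh\mu)|=1$ and $\chi^\mu[\wh\mu]=\pm 1$, which proves the first assertion.

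For the sign when $\mu=\mu'$, I would use that border peeling preserves self-conjugacy, so each $\mu^{(i-1)}$ is self-conjugate with $\ell(\mu^{(i-1)})=\mu^{(i-1)}_1=\mu_i-(i-1)$. Since the outer rim visits every nonzero row, the $i$th rim hook spans $\mu_i-(i-1)$ rows and contributes sign $(-1)^{\mu_i-i}$. Multiplying,
$$\chi^\mu[\wh\mu]\,=\,(-1)^{\sum_{i=1}^s (\mu_i-i)}\,=\,(-1)^{\sum_i \mu_i - s(s+1)/2}.$$
Substituting $\sum_i \mu_i=(n+s^2)/2$, which follows from $n=\sum_i h_{i,i}=2\sum_i \mu_i - s^2$, reduces the exponent to $(n-s)/2$. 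The only non-routine part of the whole argument is the rigidity of the border-strip decomposition, which rests on the two facts in the first paragraph; everything else is bookkeeping.
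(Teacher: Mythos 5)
Your proof is correct and takes essentially the same route as the paper, which likewise notes that the principal hook lengths force a unique rim hook tableau in the Murnaghan--Nakayama expansion and obtains the sign as the product of the signs of the successively peeled hooks. Your rigidity argument (each hook must be the full outer rim of the current shape) and the exponent bookkeeping are exactly the details the paper's two-line proof leaves implicit.
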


\begin{proof} The principal hook condition implies that there is a unique
rim hook condition in the Murnaghan--Nakayama rule. The second part follows
by taking the product of signs of all hooks.
\end{proof}

\begin{prop} \label{cor:ds-exp}
We have: \, $|\Ups(\rho_k)|\ts > \ts 3^{\lceil k/2\rceil-1}$
\ts and \ts $|\Ups(\mon_k)|\ts > \ts 5^{k-1}$.
\end{prop}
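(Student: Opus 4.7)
The plan is to apply the Main Lemma (Lemma~\ref{t:main}): since $\rho_k$ and $\mon_k$ are self-conjugate, it suffices to exhibit sufficiently many $\la\vdash n$ with $\chi^\la[\wh\mu]\ne 0$. I would produce such partitions by listing all $\la$ whose own principal hook partition coincides with $\wh\mu$, for then Lemma~\ref{l:large-durfee} delivers $\chi^\la[\wh\mu]=\chi^\la[\wh\la]=\pm 1\ne 0$ for free.

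The counting is pure Frobenius-coordinate bookkeeping. Fixing the principal hook lengths $h_1>h_2>\cdots>h_d$, a partition with these hooks is determined by its arm sequence, and the conditions $a_1>a_2>\cdots>a_d\ge 0$ together with $l_1>l_2>\cdots>l_d\ge 0$ (where $l_i:=h_i-1-a_i$) translate to $a_i-a_{i+1}\in\{1,2,\ldots,h_i-h_{i+1}-1\}$ for each $i<d$, together with $a_d\in\{0,1,\ldots,h_d-1\}$. Each such tuple yields a distinct valid partition with the prescribed principal hooks. For $\mu=\rho_k$ consecutive hook lengths differ by $4$, so each of the $\lceil k/2\rceil-1$ gaps contributes $3$ choices; $a_d$ contributes $1$ or $3$ choices according as $k$ is odd ($h_d=1$) or even ($h_d=3$). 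This alone produces at least $3^{\lceil k/2\rceil-1}$ partitions in $\Ups(\rho_k)$. To sharpen to a strict inequality, I would additionally include $(n)$, which lies in every $\Ups(\mu)$ via $\chi^{(n)}\equiv 1$ and is new to the family as soon as $d\ge 2$ (since $\ds((n))=1$); when $k=2$ the raw count $3^d=3$ already exceeds $3^{d-1}=1$.

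For $\mu=\mon_k$ the gap $h_i-h_{i+1}=6$ gives $5$ choices per gap across the $k-1$ gaps, and $h_d=3$ gives $3$ choices for $a_d$, producing $3\cdot 5^{k-1}>5^{k-1}$ partitions in $\Ups(\mon_k)$. The representation-theoretic content is carried entirely by the Main Lemma together with the unique-rim-hook-tableau proof of Lemma~\ref{l:large-durfee}; the remaining work is purely combinatorial, namely checking that each Frobenius tuple above gives a genuinely distinct valid partition (immediate from the strict monotonicity conditions) and, in the staircase case with $k$ odd, tacking on the extra partition $(n)$ to upgrade $\ge$ to $>$. I do not foresee any serious obstacle beyond this bookkeeping.
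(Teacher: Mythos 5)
Your argument is correct, and it reaches the bound by a cleaner route than the paper, although the underlying mechanism is the same. The paper also feeds the class $\wh\mu$ into the Main Lemma, but it produces the relevant partitions inductively: starting from the three shapes of weight $(3)$, at each step it attaches a new border rim hook together with two (resp.\ four) extra boxes split between the end of the first row and the bottom of the first column, checking along the way that each shape so built admits a unique rim hook tableau of weight $\wh\rho_k$ (resp.\ $\wh\mon_k$), hence has character value $\pm 1$ there. Your family --- all $\la$ with $\wh\la=\wh\mu$ --- is in fact exactly the family that induction generates (the $3$, resp.\ $5$, ways of splitting the extra boxes are precisely your $h_i-h_{i+1}-1$ choices per gap), but you get the count in closed form, $\prod_{i<d}(h_i-h_{i+1}-1)\cdot h_d$, directly from Frobenius coordinates, and the nonvanishing comes for free from Lemma~\ref{l:large-durfee} rather than from an inductive uniqueness check. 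A genuine bonus of your version: for odd $k$ this family has size exactly $3^{\lceil k/2\rceil-1}$, so the paper's construction alone only yields the weak inequality in that case, and your remark that $(n)\in\Ups(\mu)$ always holds and has Durfee size $1$ supplies the missing strictness. (Both arguments silently exclude the degenerate case $k=1$, for which the strict inequality as stated fails anyway.)
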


\begin{proof}
There are two sequences of principal hook partitions for $\rho_k$:
For $k=2m+1$ odd the sequence is
$ (4m+1, 4m-3, \dots, 5,1)$.
For $k=2m$ even the sequence is
$(4m-1, 4m-5, \dots, 7, 3)$.
In each case the proof is by induction on $m$.
We show the even case. The odd one is similar.
For each $\la \in \{(3), (2,1), (1^3) \}$ there is exactly one rim hook tableau
of shape $\la$ and weight $(3)$.
So that, by the Murnaghan--Nakayama rule and the Main Lemma, $|\Ups(\rho_2)| = 3 >1$.
We assume, by induction hypothesis, that there are $3^m$ partitions $\la$ such that
for each of them there is exactly one rim hook tableau of shape $\la$ and weight
$(3, \dots, 4m-1)$, and that the rim hook of size $4m-1$ intersects the first row and
the first column of $[\la]$.
For each such $\la$ we construct three partitions as follows:
Let $H$ be the rim hook in $\la$ of size $4m-1$ with end boxes $(1,a)$ and $(b,1)$.
Define
$$
\widetilde H = \{ (x+1,y+1) \mid (x,y)\in H \} \cup \{ (1,a+1), (b+1,1)\}.
$$
Then $|\widetilde H| = 4m+1$.
Define partitions of size $(4m+3) + (4m-1) + \cdots$ by
\begin{align*}
[\la(1)] & = [\la] \cup \widetilde H \cup \{ (1, a+2), (1, a+3) \}; \\
[\la(2)] & = [\la] \cup \widetilde H \cup \{ (1, a+2), (b+2, 1) \}; \\
[\la(3)] & = [\la] \cup \widetilde H \cup \{ (b+2, 1), (b+3, 1) \}.
\end{align*}
We claim that for each $i=1,2,3$, $\la(i)$ has exactly one rim hook tableau
of shape $\la(i)$ and weight $(3, 7, \dots, 4m+3)$.
The southeast border of $[\la(i)]$ is exactly a rim hook $H(i)$ of size
$4m+3$.
It intersects the first row and the first column of $[\la(i)]$.
By construction $[\la(i)] \setminus H(i) = [\la]$.
But, by induction hypothesis, there is only one rim hook tableau of
shape $\la$ and weight $(3, 7, \dots, 4m-1)$.
So, there is only one rim hook tableau of shape $\la(i)$ and weight
$(3,7, \dots, 4m+3)$.
So, by the Main Lemma, $\la(i) \in \Ups (\rho_{2m+2}) $.

It remains to show that the $3^{m+1}$ partitions just constructed are all
different.
This follows also by induction and the fact that
the construction of $\lambda(i)$ from $\lambda$ is reversible, since $\lambda(i)$ has exactly $3-i$ parts of size 1.
Thus, $|\Ups(\rho_{2m+2})| \ge 3^{m+1} > 3^m$.

For the caret shapes there is only one sequence of principal hook partitions
$\wh \gamma_k = (6k-3, 6k-9, \dots, 3)$.
Since $\ds(\mon_k)=k$, a similar argument now proves the second claim.
\end{proof}

In other words, in both cases the number of irreducible constituents
is weakly exponential $\exp\Theta(\sqrt{n})$.  Indeed, the corollary
gives the lower bound and the asymptotics for $\pi(n)$ gives the
upper bound. Note also that the lemma gives nothing for the chopped
square shape~$\eta_k$.

\medskip  \subsection{Large principal hooks}\label{ss:php} \.
The following result is a trivial consequence of the classical Murnaghan--Nakayama rule.

\begin{lemma} Suppose $\la, \mu \vdash n$ and $\wh\la_1 < \wh \mu_1$.
Then $\chi^\la[\widehat{\mu}]=0$.
\end{lemma}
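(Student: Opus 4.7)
The plan is to apply the Murnaghan--Nakayama rule to the cycle type $\wh\mu$ and show that the set of rim hook tableaux $\RH(\la,\wh\mu)$ is empty. The key observation is the standard fact that every rim hook of a Young diagram $[\la]$ has size equal to the hook length $h_{i,j}$ of some cell $(i,j)\in[\la]$, and conversely each hook length is realized by a unique rim hook. Consequently, the maximum size of a rim hook that can be removed from~$[\la]$ equals
$$\max_{(i,j)\in[\la]} h_{i,j} \, = \, h_{1,1}(\la) \, = \, \la_1 + \la_1' - 1 \, = \, \wh\la_1 \ts,$$
since the hook length is maximized at the corner $(1,1)$.

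Now, the parts of $\wh\mu$ are the principal hook lengths of~$\mu$, arranged in decreasing order, so $\wh\mu_1 = h_{1,1}(\mu) = \mu_1+\mu_1'-1$ is its largest part. By the Murnaghan--Nakayama rule,
$$\chi^\la[\wh\mu] \, = \, \sum_{A\in \RH(\la,\wh\mu)} \sign(A)\ts,$$
and we may compute this by peeling off rim hooks in any chosen order, say in decreasing order of size. The very first step requires removing a rim hook of size~$\wh\mu_1$ from~$[\la]$; but by the assumption $\wh\la_1 < \wh\mu_1$ together with the bound above, no such rim hook exists. Hence $\RH(\la,\wh\mu)=\nothing$, the sum is empty, and $\chi^\la[\wh\mu]=0$, as claimed. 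There is no real obstacle here: the only point requiring care is the identification of rim hook sizes with hook lengths, which is standard (see e.g.~\cite[\S7.17]{Sta}).
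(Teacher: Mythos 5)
Your proof is correct and is precisely the argument the paper leaves implicit when it calls the lemma ``a trivial consequence of the classical Murnaghan--Nakayama rule'': since every rim hook removable from $[\la]$ has size equal to some hook length, hence at most $h_{1,1}=\la_1+\la_1'-1=\wh\la_1<\wh\mu_1$, the set $\RH(\la,\wh\mu)$ is empty when the parts are peeled largest first. No gap; the identification of removable rim hooks with hook lengths is standard, exactly as you cite.
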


From here we conclude the following counterpart of Proposition~\ref{cor:ds-exp}.

\begin{prop} \label{p:php-exp}
There are at least $3^{\lceil k/2\rceil-3}$ partitions $\la$ of $n=k(k+1)/2$
such that $\chi^\la[\wh \rho_k]=0$.  Similarly, there are at least $5^{k-3}$
partitions $\la$ of $n=3k^2$ such that $\chi^\la[\wh \mon_k]=0$.
\end{prop}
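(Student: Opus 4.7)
The plan is to invoke the lemma preceding the proposition: since $(\wh{\rho_k})_1=2k-1$ and $(\wh{\mon_k})_1=6k-3$, it suffices to produce $3^{\lceil k/2\rceil-3}$ distinct partitions $\la\vdash\binom{k+1}{2}$ with $\la_1+\la'_1\le 2k-2$, and $5^{k-3}$ distinct partitions $\la\vdash 3k^2$ with $\la_1+\la'_1\le 6k-4$.

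To produce them I would mirror the inductive rim-hook construction in the proof of Proposition~\ref{cor:ds-exp}, but stop it two steps short of its natural end. For the staircase, running that induction only through the $\rho_{k-4}$ stage already yields $3^{\lceil(k-4)/2\rceil-1}=3^{\lceil k/2\rceil-3}$ distinct partitions $\mu\vdash\binom{k-3}{2}$ with $(\wh\mu)_1=2k-9$, hence $\mu_1+\mu'_1=2k-8$. Each such $\mu$ can then be extended to a $\la\vdash\binom{k+1}{2}$ by adjoining the missing $4k-6$ cells inside a $(k-1)\times(k-1)$ bounding box; this box has enough room because $(k-1)^2\ge\binom{k+1}{2}$ for all $k\ge 5$, and confining $\la$ to it forces $\la_1+\la'_1\le 2k-2$ automatically. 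The caret case is parallel: run the caret-variant of the construction through the $\mon_{k-2}$ stage to obtain $5^{k-3}$ distinct partitions $\mu\vdash 3(k-2)^2$ with $(\wh\mu)_1=6k-15$, then extend each by $12k-12$ cells inside a $(3k-2)\times(3k-2)$ bounding box, which again gives $\la_1+\la'_1\le 6k-4$.

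The hard part will be making the extension step reversible, so that distinct $\mu$'s yield distinct $\la$'s. I would handle this by choosing a canonical rule for the extension---for instance, adjoining two specific rim hooks of sizes $2k-5$ and $2k-1$ (respectively $6k-9$ and $6k-3$) at canonical concave corners of $\mu$ that lie strictly in the interior, so that neither rim hook extends the first row nor the first column, and so that both can be peeled off uniquely from $\la$ to recover $\mu$. Verifying that the partitions produced in Proposition~\ref{cor:ds-exp} admit such interior attachment points (and restricting to a balanced subfamily if some $\mu$'s are too thin to accommodate an interior rim hook) is a local combinatorial check rather than a conceptual difficulty; for the small values of $k$ where $3^{\lceil k/2\rceil-3}<1$ or $5^{k-3}<1$ the statement is vacuous.
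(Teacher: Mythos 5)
You correctly identify the key step---the unlabeled lemma of $\S$\ref{ss:php}, which gives $\chi^\la[\wh\mu]=0$ whenever $\wh\la_1<\wh\mu_1$---and this is exactly where the paper starts (note that $\la_1+\la'_1\le 2k-1$ already suffices, since $\wh\la_1=\la_1+\la'_1-1$). But your construction of the family does not work as written. First, the canonical extension you propose is impossible: a rim hook of $\la$ avoiding the first row and first column lies in rows $\{2,\dots,\la'_1\}$ and columns $\{2,\dots,\la_1\}$, hence has length at most $\la_1+\la'_1-3$; if neither added hook meets row $1$ or column $1$ then $\la_1=\mu_1$, $\la'_1=\mu'_1$, and the $\rho_{k-4}$--stage partitions satisfy $\mu_1+\mu'_1=\wh\mu_1+1=2k-8$, so no interior rim hook can be longer than $2k-11$, while you need lengths $2k-5$ and $2k-1$ (likewise $6k-9$, $6k-3$ are too long in the caret case). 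Second, the bounding-box fallback fails for most $\mu$: in the inductive construction of Proposition~\ref{cor:ds-exp} the first row grows by $1$, $2$ or $3$ at each step, so at the $\rho_{k-4}$ stage $\mu_1$ can be as large as roughly $3(k-4)/2>k-1$, and since $\mu\subseteq\la$ such $\mu$ cannot be confined to a $(k-1)\times(k-1)$ box. Only those $\mu$ with $k-7\le\mu_1\le k-1$ could possibly fit, and a local-limit count shows these are only an $O(1/\sqrt{k})$ fraction of the family; since you already spent the factor $3^{-2}$ by stopping at stage $k-4$, ``restricting to a balanced subfamily'' leaves you strictly below the required $3^{\lceil k/2\rceil-3}$. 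Finally, injectivity of the extension is never actually secured---the mechanism you offer for it is precisely the impossible interior peeling.

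The paper avoids all of this by never extending smaller partitions. It reruns the inductive construction of Proposition~\ref{cor:ds-exp} directly for a modified sequence of principal hook lengths: for odd $k$ it builds partitions of $n$ with principal hooks $(2k-3,\ts 2k-5,\ts 2k-9,\dots,5,3)$ instead of $\wh\rho_k=(2k-1,2k-5,\dots,5,1)$, i.e.\ two cells are moved from the outermost to the innermost hook (similarly for even $k$, and for $\mon_k$). Every partition so produced has first principal hook $2k-3<2k-1=\wh\rho_{k,1}$, so the lemma applies immediately; the constrained placements of the innermost and outermost hooks cost exactly the factor $3^{2}$ (resp.\ $5^{2}$) that accounts for the drop from $3^{\lceil k/2\rceil-1}$ to $3^{\lceil k/2\rceil-3}$, and distinctness of the resulting partitions is inherited from the reversibility argument already given there. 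If you wish to salvage your route you would need a different extension rule (for instance filling inside the $\mu'_1\times\mu_1$ rectangle, whose area does exceed $n$ for large $k$) together with a genuine injectivity argument and a treatment of the unbalanced $\mu$; as it stands, the proposal has a real gap.
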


\begin{proof}
Follow the construction as in the proof of Proposition~\ref{cor:ds-exp},
to construct $3^{\lceil k/2\rceil-3}$ partitions $\la$ with principal hooks of size
$$(4\ts m-1, 4\ts m-3, 4\ts m-7,4\ts m-11, \dots, 5,3) \quad \text{for} \ \  k=2m+1, \ \ \  \text{and}
$$
$$(4\ts m-3, 4\ts m-5,4\ts m-9,4\ts m-13,\dots, 7,5) \quad \text{for} \ \  k=2\ts m\ts.
$$
Here $3^2$ possibilities are lost when counting placements of
the outer and the inner rim hooks.
By the lemma above, all such characters $\chi^\la[\wh \rho_k]=0$.   The second
part follows verbatim.
\end{proof}

The proposition implies that there is a weakly exponential number of
partitions for which the Saxl conjecture and Conjecture~\ref{conj:ups}
cannot be proved.  Curiously, this approach does not apply to~$\eta_k$.
In Subsection~\ref{ss:rand-caret} we prove a much stronger result about the number
of partitions~$\la$ such that $\chi^\la[\wh \mon_k]=0$.

\smallskip

\begin{ex}{\rm
Of course, just because $\chi^\la[\wh \mu]=0$ it does not mean that $\la \notin \Phi(\mu)$.
For example $\chi^{(5,1)} \in \Phi(\rho_3)$, even though
$$\chi^{(5,1)}[\wh \rho_3] \. = \. \chi^{(5,1)}[5,1] \. = \. 0\ts.
$$
}\end{ex}

\bigskip


\section{Hooks in tensor squares}\label{s:hook}

\subsection{Chopped square shape}  \. Let $n=k^2-1$, so that $\eta_k=(k^{k-1},k-1)\vdash n$.
Recall that $\wh\eta_k = (2k-1,2k-3,\ldots,7,5,3)$.

\begin{lemma} \label{l:eta-hook}
There exists a constant~$L$, s.t.~$(n-\ell,1^\ell) \in \Ups(\eta_k)$,
for all $L\le \ell < n/2$.
\end{lemma}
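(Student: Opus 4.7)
The plan is to invoke the Main Lemma, which reduces the claim to showing that the single character value $\chi^{(n-\ell,1^\ell)}[\wh\eta_k]$ is nonzero for each $\ell$ in the stated range; here $\wh\eta_k=(2k-1,2k-3,\ldots,5,3)$ consists of distinct odd parts. The Jacobi--Trudi expansion $s_{(a,1^b)}=\sum_{j\ge 0}(-1)^j h_{a+j}\,e_{b-j}$, paired with $p_{\wh\eta_k}$, yields the compact hook character formula
\[
\chi^{(n-\ell,1^\ell)}[\nu]\;=\;(-1)^\ell\sum_{S\subseteq[k-1]\,:\,\sigma_S\le\ell}(-1)^{|S|},\qquad \sigma_S:=\sum_{i\in S}\nu_i.
\]
Since every $\nu_i$ is odd, $|S|\equiv\sigma_S\pmod 2$, so the inner sum collapses to $(-1)^\ell G_\ell$ with $G_\ell:=\sum_{j=0}^{\ell}(-1)^j\pd_R(j)$ and $R=\{3,5,\ldots,2k-1\}$. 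The problem becomes to prove $G_\ell\neq 0$.

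The next step is a short generating-function manipulation putting $G_\ell$ into a shape directly amenable to Theorem~\ref{t:pdp-fin}. Summing the geometric series gives
\[
\sum_\ell G_\ell\,t^\ell\;=\;\frac{\prod_{j\in R}(1-t^j)}{1-t}\;=\;(1+t+t^2)\prod_{j\in R'}(1-t^j),\qquad R':=\{5,7,\ldots,2k-1\},
\]
and extracting coefficients yields the key identity
\[
G_\ell\;=\;(-1)^\ell\bigl(g'_\ell-g'_{\ell-1}+g'_{\ell-2}\bigr),\qquad g'_j:=\pd_{R'}(j).
\]
So I must show $g'_\ell+g'_{\ell-2}\neq g'_{\ell-1}$. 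Applying Theorem~\ref{t:pdp-fin} to $R'=R(5,2,k-3)$ produces a universal $L'=L(5,2)$ such that $g'_j$ is strictly increasing on $[L',\lfloor N'/2\rfloor]$ and strictly decreasing on $[\lceil N'/2\rceil,N'-L']$, where $N'=n-3$. In the strictly increasing regime the inequality $g'_\ell-g'_{\ell-1}\ge 1$ combined with $g'_{\ell-2}\ge 0$ immediately forces $G_\ell\neq 0$; in the strictly decreasing regime the same argument applies to $g'_{\ell-2}-g'_{\ell-1}\ge 1$. Setting $L=L'+2$, these two regimes cover every $\ell\in[L,n/2)$ except the at most two ``middle'' values within $1$ of $N'/2$.

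Those exceptional middle values are dispatched using the palindromic symmetry $g'_j=g'_{N'-j}$. When $k$ is odd ($N'$ odd), the sole in-range middle value is $\ell=(N'+1)/2$, and the two-peak identity $g'_{(N'-1)/2}=g'_{(N'+1)/2}$ reduces $G_\ell$ to $\pm g'_{(N'-3)/2}\neq 0$. When $k$ is even ($N'$ even), the in-range middle value is $\ell=N'/2+1$, where $G_\ell=\pm\bigl(2g'_{N'/2-1}-g'_{N'/2}\bigr)$; this is nonzero for all $k$ beyond a finite threshold by the near-Gaussian concentration of $\pd_{R'}$ at its mean (so $g'_{N'/2}/g'_{N'/2-1}\to 1<2$), and the remaining finitely many small-$k$ cases are absorbed by enlarging $L$. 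The main obstacle is precisely this last middle value for $k$ even: Theorem~\ref{t:pdp-fin} supplies only strict unimodality and not the sharper near-peak estimate $g'_{N'/2}<2g'_{N'/2-1}$ that is needed there, so one must either invoke a local-limit-type inequality for $\pd_{R'}$ or verify a bounded range of $k$ directly.
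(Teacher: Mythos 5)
Your proposal is correct and is, at its core, the paper's own argument: both reduce via the Main Lemma (Lemma~\ref{t:main}) to showing $\chi^{(n-\ell,1^\ell)}[\wh\eta_k]\neq 0$, both land on the identity $\chi^{(n-\ell,1^\ell)}[\wh\eta_k]=\pd_{R'}(\ell)-\pd_{R'}(\ell-1)+\pd_{R'}(\ell-2)$ with $R'=\{5,7,\ldots,2k-1\}$, and both finish with the Odlyzko--Richmond monotonicity (Theorem~\ref{t:pdp-fin}). The only substantive difference is how the identity is derived: the paper counts rim hook tableaux directly via Murnaghan--Nakayama (the three placements of the $3$-hook in a hook shape, all remaining rim hooks being odd and confined to the arm or the leg, hence of positive sign), while you get the same thing from the hook Jacobi--Trudi expansion together with the factorization $(1-t^3)/(1-t)=1+t+t^2$; these are the same computation in different clothing, and your generating-function route is a perfectly valid alternative. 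Your near-peak analysis is actually more scrupulous than the paper's: the paper simply asserts the strict inequalities ``for $\ell\le n/2$ large enough,'' whereas Theorem~\ref{t:pdp-fin} literally guarantees them only up to $\lfloor N'/2\rfloor$, so the one in-range value $\ell=N'/2+1$ for $k$ even, where the character equals $2g'_{N'/2-1}-g'_{N'/2}$, is glossed over in the paper exactly as it is left open by you; your odd-$k$ symmetry argument for the middle value is fine, and the even-$k$ value indeed requires either a near-peak ratio estimate for $\pd_{R'}$ beyond bare unimodality, a finite verification, or simply restricting the range to $\ell\le\lfloor N'/2\rfloor$. So there is no gap in your argument relative to the paper's own standard of rigor.
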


\begin{proof}[Proof of Lemma~\ref{l:eta-hook}]
By the Main Lemma (Lemma~\ref{t:main}), it suffices to show that
$$\chi^{(n-\ell,1^\ell)}[\wh \eta_k] \. > 0 \quad \text{for $\ell$ large enough.}
$$
 We claim that the above character is equal to
$$
(\ast) \qquad \pd_{R}(\ell)-\pd_{R}(\ell-1)+\pd_{R}(\ell-2)
$$
where $R=\{5,7,\ldots,2k-1\}$
(see $\S$\ref{ss:basic-pi} for notations).  By Theorem~\ref{t:pdp-fin},
$$
\pd_R(\ell)\. >\. \pd_R(\ell-1)\. > \. \pd_R(\ell-2)\. > \. 0
$$
for $\ell\le n/2$ large enough, this would prove the theorem.

For~$(\ast)$, by the Murnaghan--Nakayama rule, the character
is equal to the sum over all rim hook tableaux of shape
$(n-l,1^\ell)$ and weight $\wh\eta_k$  of the sign of the tableaux.
For convenience, order the parts of $\wh\eta_k$ in increasing order.
Note that the sign of every rim hook which fits inside in either leg or arm of
the hook is positive.  There are 3 ways to place a 3-hook, with
the foot of size $\ell$, $\ell-1$ and $\ell-2$, respectively.
Therefore, the number of rim hook tableaux is equal to
the number of partitions into distinct parts in~$R$, as in~$(\ast)$.
\end{proof}

\begin{figure}[hbtp]
   \includegraphics[scale=0.46]{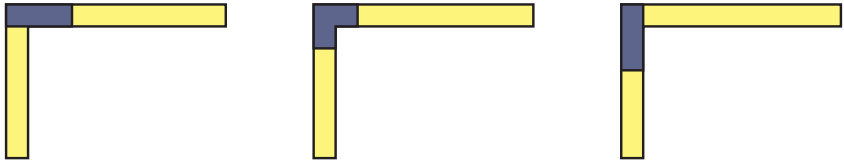}
   \caption{Three ways to place a 3-hook into a hook diagram.}
   \label{f:3rim-hook}
\end{figure}

\begin{ex}\label{ex:const}
{\rm  Although we made no attempt to find constant~$L$ in the lemma,
we know that it is rather large even for $k\to \infty$.  For example,
$$
\pd_{5,2}(21) \ts - \ts \pd_{5,2}(20) \ts + \ts \pd_{5,2}(19)
\. = \. 0,
$$
which implies $\chi^{(n-21,1^{21})}[\wh\eta_k] = 0$ for all $n \ge 21$, $k\ge 11$.
Note also that function $\pd_{5,2}(n)$ continues to be non-monotone for larger~$n$,
i.e. $\pd_{5,2}(41)=15$ and $\pd_{5,2}(42)=14$.
}
\end{ex}

\medskip

\subsection{Caret shape}  \. Let $n = 3k^2$, so that $\gamma_k \vdash n$.
Recall that $\wh\gamma_k = (6k-3, 6k-9, \dots, 9,3)$.
The following is the analogue of Lemma~\ref{l:eta-hook} for the caret shape.

\begin{lemma} \label{l:ups-hook}
There exists a constant~$L$, such that $(n-\ell,1^\ell) \in \Ups(\mon_k)$,
for all $L\le \ell \le n/2$.
\end{lemma}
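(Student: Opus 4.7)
The plan is to mirror the proof of Lemma~\ref{l:eta-hook} essentially verbatim, adding a single observation to handle the divisibility of the parts of $\wh{\mon_k}$. By the Main Lemma (Lemma~\ref{t:main}), it suffices to exhibit a nonzero value of $\chi^{(n-\ell,1^\ell)}[\wh{\mon_k}]$ for every $\ell$ in the stated range, since $\mon_k=\mon_k'$ and $\wh{\mon_k}=(6k-3,6k-9,\ldots,9,3)$ is the principal-hook weight demanded by the Main Lemma. All parts of $\wh{\mon_k}$ are odd, so exactly as in Lemma~\ref{l:eta-hook} every rim hook placed either as a horizontal strip in the arm or as a vertical strip in the leg of $(n-\ell,1^\ell)$ contributes sign $+1$.

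I then run the Murnaghan--Nakayama rule with the $3$-hook reserved for last, distributing the larger parts $R^{*}=\{9,15,\ldots,6k-3\}$ freely between the arm and the leg as straight strips. The residual shape is a hook $(a',1^{b'})$ with $a'+b'=3$, so it equals one of $(3)$, $(2,1)$, $(1^3)$; the corresponding $3$-hook placements carry signs $+,-,+$ and take up $0,1,2$ leg boxes, respectively. This yields the exact analogue of formula $(\ast)$ from Lemma~\ref{l:eta-hook}:
\[
\chi^{(n-\ell,1^\ell)}[\wh{\mon_k}] \ = \ \pd_{R^{*}}(\ell)\,-\,\pd_{R^{*}}(\ell-1)\,+\,\pd_{R^{*}}(\ell-2).
\]

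The only new ingredient, and the main obstacle compared with the chopped-square case, is that every element of $R^{*}$ is divisible by $3$, so $\pd_{R^{*}}(m)=0$ unless $3\mid m$. Since exactly one of $\ell,\ell-1,\ell-2$ is a multiple of $3$, the three-term expression collapses to a single nonzero summand $\pm\pd_{R^{*}}(3q)$ with $q=\lfloor \ell/3\rfloor$, and dividing each element of $R^{*}$ by $3$ gives a bijection $\pd_{R^{*}}(3q)=\pd_{R''}(q)$, where $R''=\{3,5,7,\ldots,2k-1\}=R(3,2,k-2)$. Because $\gcd(3,2)=1$ and the total is $N=k^2-1$, Theorem~\ref{t:pdp-fin} supplies a constant $L_0=L(3,2)$ such that $\pd_{R''}(q)>0$ for all $L_0\le q\le N-L_0$. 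Taking $L$ to be an appropriate multiple of $L_0$ forces $q\ge L_0$ whenever $\ell\ge L$, while the upper bound $\ell\le n/2=3k^2/2$ yields $q\le k^2/2\le N-L_0$ for all $k$ sufficiently large. Thus the character is nonzero and the Main Lemma delivers $(n-\ell,1^\ell)\in\Ups(\mon_k)$.
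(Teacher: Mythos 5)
Your proof is correct and takes essentially the same route as the paper's: apply the Main Lemma at the class $\wh{\mon_k}$, note via the Murnaghan--Nakayama rule that only the $3$-hook placement leaving a leg length divisible by $3$ can contribute (all remaining parts being multiples of $3$), reduce to $\pm\ts\pd_{R}(\lfloor\ell/3\rfloor)$ with $R=\{3,5,\ldots,2k-1\}$, and invoke Theorem~\ref{t:pdp-fin} for positivity. The paper states the collapse directly instead of first writing the three-term alternating sum, but the argument, including the reduction to the set $R$, is identical.
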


Combined with Theorem~\ref{t:Bla} as above, we obtain:

\begin{cor}\label{c:ups-hook}
For $k$ large enough, we have $\ts (n-\ell,1^\ell) \in \Ups(\mon_k)$,
for all $1\le \ell \le n-1$.
\end{cor}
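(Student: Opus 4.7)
The plan is to combine three ingredients, each handling a different range of $\ell$.

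First, I would apply Theorem~\ref{t:Bla} directly with $\mu = \mon_k$. Reading off the parts
\[
\mon_k \. = \. (3k-1,\, 3k-3,\, \ldots,\, k+3,\, k+1,\, k,\, k-1,\, k-1,\, k-2,\, k-2,\, \ldots,\, 1,\, 1)\ts,
\]
the initial strictly decreasing run is $3k-1 > 3k-3 > \cdots > k+1 > k > k-1$, of length $r = k+2$; the first equality occurs between the two copies of $k-1$. Thus Theorem~\ref{t:Bla} yields $(n-m, 1^m) \in \Ups(\mon_k)$ for every $m < k+2$, which covers $1 \le \ell \le k+1$.

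Second, Lemma~\ref{l:ups-hook} supplies a universal constant $L$, independent of $k$, such that $(n-\ell, 1^\ell) \in \Ups(\mon_k)$ for every $L \le \ell \le n/2$. Since $L$ does not grow with $k$ while the range in the previous step extends up to $\ell = k+1 \to \infty$, for every $k$ satisfying $k + 1 \ge L$ the two ranges overlap and together cover $1 \le \ell \le n/2$.

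Third, I would invoke self-conjugacy: by the definition of the caret shape, $\mon_k = \mon_k'$, so Proposition~\ref{p:sign} gives $g(\la, \mon_k, \mon_k) = g(\la', \mon_k, \mon_k)$ for every $\la \vdash n$. The conjugate of the hook $(n-\ell, 1^\ell)$ is again a hook, namely $(\ell+1, 1^{n-\ell-1}) = (n-\ell', 1^{\ell'})$ with $\ell' = n-\ell-1$. For $n/2 < \ell \le n-1$ we have $0 \le \ell' < n/2$, so the first two steps (together with the trivial observation that $(n) \in \Ups(\mu)$ for any $\mu$, since $\langle \chi^{(n)}, \chi^\mu \otimes \chi^\mu\rangle = \langle \chi^\mu, \chi^\mu\rangle = 1$) yield $(n-\ell', 1^{\ell'}) \in \Ups(\mon_k)$, and hence $(n-\ell, 1^\ell) \in \Ups(\mon_k)$ as well.

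The three ranges exhaust $\{1, 2, \ldots, n-1\}$, proving the corollary. There is no substantive obstacle here: the real content lies in Lemma~\ref{l:ups-hook}, which in turn rests on the Main Lemma and on the Odlyzko--Richmond inequalities of Theorem~\ref{t:pdp-fin}; the remainder is a bookkeeping argument combining that lemma with the Blasiak bound and with self-conjugacy. The only detail needing genuine care is the arithmetic of the initial strictly decreasing prefix of $\mon_k$, which must reach length $r = k+2 \ge L$ for sufficiently large $k$ so that the two ranges above meet.
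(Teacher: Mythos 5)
Your argument is correct and follows essentially the same route as the paper: Theorem~\ref{t:Bla} for small $\ell$, Lemma~\ref{l:ups-hook} for $L\le\ell\le n/2$, and Proposition~\ref{p:sign} (with self-conjugacy of $\mon_k$) for $n/2<\ell\le n-1$. The only differences are cosmetic bookkeeping: you push the strictly decreasing prefix to $r=k+2$ where the paper is content with $\ell\le k-1$, and you handle the $\ell=n-1$ (i.e.\ $(1^n)$) case explicitly via $(n)\in\Ups(\mu)$, both of which are fine.
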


\begin{proof}[Proof of Corollary~\ref{c:ups-hook}]
Let $k>L$.  Theorem~\ref{t:Bla} proves the case $\ell \le k-1$ and the
lemma gives $L\le \ell \le n/2$.  In total, these cover all
$0\le \ell \le n/2$.  Now Proposition~\ref{p:sign} prove the remaining
cases $n/2 < \ell \le n-1$.  \end{proof}

\begin{proof}[Proof of Lemma~\ref{l:ups-hook}]
The proof follows the argument in the proof of Lemma~\ref{l:eta-hook}.
The difference is that the removed 3-rim hook can be removed only one way,
as the other two values are not zero mod~3.  This simplifies the character
evaluation and gives
$$
\bigl|\chi^{(n-\ell,1^\ell)}[\wh \mon_k]\bigr| \. = \.  \pd_R\bigl(\lfloor \ell/3\rfloor\bigr),
$$
where $R=\{3,5,\ldots, 2k-1\}$ is obtained from~$\wh \mon_k$ by removing
the smallest part and then dividing by~3. Thus, the above character is nonzero,
and the Main Lemma implies the result.
\end{proof}

\medskip

\subsection{Staircase shape}  \.
Let $n=\binom{k+1}{2}$, so that $\rho_k\vdash n$.
The following result is the analogue of Lemma~\ref{l:eta-hook} for the staircase shape.
Note that $\wh \rho_k=(2k-1, 2k-5, 2k-9,\ldots)$. There are two
different cases: odd~$k$ and even~$k$, which correspond to the
smallest principal hooks $(\ldots, 9,5,1)$ and $(\ldots,11,7,3)$,
respectively.

\begin{lemma} \label{l:rho-hook}
There exists a constant~$L$, such that $(n-\ell,1^\ell) \in \Ups(\rho_k)$,
for all $L\le \ell < n/2$.
\end{lemma}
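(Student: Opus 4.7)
The strategy mirrors the proofs of Lemmas~\ref{l:eta-hook} and~\ref{l:ups-hook}. By the Main Lemma, it suffices to show that $\chi^{(n-\ell,1^\ell)}[\wh\rho_k] \ne 0$ for every $\ell$ in the range $L \le \ell < n/2$. The plan is to evaluate this character directly via the Murnaghan--Nakayama rule, using the fact that a rim hook inside a hook shape $(a,1^b)$ can only be (i)~contained in the arm (sign $+1$), (ii)~contained in the leg (sign $(-1)^{s-1}$ for size~$s$), or (iii)~the entire remaining hook at the terminal step. Since every part of $\wh\rho_k$ is odd, arm and leg contributions are uniformly $+1$, and the only nontrivial sign originates at the final whole-shape step.

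The analysis splits by the parity of~$k$. In the odd case $k=2m+1$ we have $\wh\rho_k = (4m+1, 4m-3, \ldots, 9, 5, 1)$ with smallest part~$1$; ordering the hooks from largest to smallest forces the terminal step to remove the one-cell shape $(1)$ with sign $+1$. A tableau is then determined by a partition of $R = \{5, 9, \ldots, 4m+1\}$ into an ``arm'' subset and a ``leg'' subset, whence
$$
\chi^{(n-\ell,1^\ell)}[\wh\rho_k] \, = \, \pd_R(\ell).
$$
Because $\sum R = n-1$, the bound $\ell < n/2$ is equivalent to $\ell \le \lfloor (\sum R)/2 \rfloor$, and Theorem~\ref{t:pdp-fin} (with $a=5$, common difference $4$) delivers $\pd_R(\ell)>0$ throughout the full range $L \le \ell < n/2$.

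In the even case $k=2m$ the smallest principal hook has size~$3$, and the analysis proceeds exactly as for the chopped square in Lemma~\ref{l:eta-hook}: the three possible size-3 terminal shapes $(3), (2,1), (1^3)$ contribute signs $+,-,+$ respectively, so with $R' = \{7, 11, \ldots, 4m-1\}$ one obtains
$$
\chi^{(n-\ell,1^\ell)}[\wh\rho_k] \, = \, \pd_{R'}(\ell) - \pd_{R'}(\ell-1) + \pd_{R'}(\ell-2).
$$
Here $\sum R' = n-3$, and inside the strict-increase regime guaranteed by Theorem~\ref{t:pdp-fin} this alternating sum is at least $1 + \pd_{R'}(\ell-2)>0$. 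The main technical obstacle is a single exceptional value of $\ell$ sitting just past $\lfloor (n-3)/2\rfloor$ but still below $n/2$, which only arises when $n-3$ is even; for this value the claim reduces, using the symmetry $\pd_{R'}(j) = \pd_{R'}((\sum R') - j)$, to the inequality $2\ts\pd_{R'}(\lfloor (n-3)/2\rfloor - 1) > \pd_{R'}(\lfloor (n-3)/2 \rfloor)$. This is a mild consequence of the Gaussian profile of $\pd_{R'}$ near its maximum, and holds once $k$ is sufficiently large; any remaining bounded set of exceptions can be absorbed into the choice of~$L$.
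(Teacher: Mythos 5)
Your argument is correct and is essentially the paper's own proof: for odd $k$ the terminal $1$-hook is forced and the character equals $\pd_R(\ell)$ with $R=\{5,9,\ldots,2k-1\}$, while for even $k$ the three placements of the terminal $3$-hook give $\pd_{R'}(\ell)-\pd_{R'}(\ell-1)+\pd_{R'}(\ell-2)$ with $R'=\{7,11,\ldots,2k-1\}$, and in both cases Theorem~\ref{t:pdp-fin} together with the Main Lemma concludes. Your handling of the single value of $\ell$ just past the peak (even $k$ with $n$ odd) is in fact more explicit than the paper, which passes over that boundary case silently; the ``Gaussian profile'' justification you give there is heuristic rather than a consequence of Theorem~\ref{t:pdp-fin}, but it is no less rigorous than the paper's own treatment of that point.
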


Combined with Theorem~\ref{t:Bla} and Proposition~\ref{p:sign}, we obtain:

\begin{cor}
For $k$ large enough, we have $(n-\ell,1^\ell) \in \Ups(\rho_k)$,
for all $1\le \ell \le n-1$.
\end{cor}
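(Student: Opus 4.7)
The plan is to mirror the proof of Corollary~\ref{c:ups-hook} exactly, combining three ingredients that together cover the whole range $1 \le \ell \le n-1$. Fix $k$ large enough that $k-1 \ge L$, where $L$ is the constant supplied by Lemma~\ref{l:rho-hook}.

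First, I would apply Theorem~\ref{t:Bla} to $\mu = \rho_k = (k, k-1, \ldots, 2, 1)$. Since the parts of $\rho_k$ are strictly decreasing, we may take $r = k$ in that theorem, which yields $(n-\ell, 1^\ell) \in \Ups(\rho_k)$ for every $0 \le \ell \le k-1$. Second, Lemma~\ref{l:rho-hook} directly gives $(n-\ell, 1^\ell) \in \Ups(\rho_k)$ for all $L \le \ell < n/2$. Because $L \le k-1$, the two ranges join up and together cover every $0 \le \ell < n/2$.

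Third, for the remaining range $\lceil n/2 \rceil \le \ell \le n-1$, I would use the self-conjugacy $\rho_k = \rho_k'$ together with Proposition~\ref{p:sign}, which yields $g(\la, \rho_k, \rho_k) = g(\la', \rho_k, \rho_k)$. The conjugate of the hook $(n-\ell, 1^\ell)$ is the hook $(\ell+1, 1^{n-\ell-1}) = (n-\ell', 1^{\ell'})$ with $\ell' = n-1-\ell$. As $\ell$ runs over the previously established range $0 \le \ell < n/2$, the conjugate index $\ell' = n-1-\ell$ sweeps through $\lceil n/2 \rceil \le \ell' \le n-1$, completing the cover of $\{1, \ldots, n-1\}$.

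There is no genuine obstacle here: the entire analytic content sits in Lemma~\ref{l:rho-hook}. The only thing to verify is that the union of the three index sets above exhausts $\{1, \ldots, n-1\}$, which reduces to the boundary check $L \le k-1$ (trivial for $k$ large) and the observation that the involution $\ell \mapsto n-1-\ell$ sends $[0, \lceil n/2\rceil - 1]$ onto $[\lfloor n/2\rfloor, n-1]$, so the two halves fit together without a gap.
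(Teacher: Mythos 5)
Your proposal is correct and is essentially identical to the paper's argument: the paper proves this corollary ``verbatim'' as Corollary~\ref{c:ups-hook}, i.e.\ Theorem~\ref{t:Bla} for $\ell\le k-1$, Lemma~\ref{l:rho-hook} for $L\le\ell<n/2$, and Proposition~\ref{p:sign} with $\rho_k=\rho_k'$ to transfer hooks $(n-\ell,1^\ell)\mapsto(\ell+1,1^{n-\ell-1})$ to the range $\ell\ge n/2$. Your boundary checks ($L\le k-1$ for large $k$, and the involution $\ell\mapsto n-1-\ell$ closing the gap at $n/2$) are exactly the points the paper leaves implicit.
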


The proof of the corollary follows verbatim the proof of Corollary~\ref{c:ups-hook}.

\begin{proof}[Proof of Lemma~\ref{l:rho-hook}]
Let $\la=(n-\ell,1^\ell)$ as above.  Treat the even~$k$ case in the
same way as the of Lemma~\ref{l:eta-hook} above.  Again, there are
three ways to remove 3-rim hook and  Theorem~\ref{t:pdp-fin} implies
that the character is nonzero.  The Main Lemma now implies the result.

The odd~$k$ case is even easier: hook~1
can be placed in~$[\la]$ in a unique way, after which we get
$\ts \chi^\la[\wh \rho_k]$ is the number of partitions $\pd_R(\ell)$
into distinct parts $R=\{5,9,13,\ldots,2k-1\}$. \end{proof}

\bigskip


\section{Two row shapes in tensor squares}\label{s:two}

\subsection{Chopped square shape}  \. Let $\eta_k=(k^{k-1},k-1)\vdash n$,
$n=k^2-1$, be as above.

\begin{lemma} \label{l:eta-two}
There exists a constant~$L$, such that $(n-\ell,\ell) \in \Ups(\eta_k)$,
for all $\ts L\le \ell \le n/2$.
\end{lemma}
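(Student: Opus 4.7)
The plan is to mirror the proof of Lemma~\ref{l:eta-hook}, but with the Frobenius formula as the entry point rather than a direct hook-diagram analysis. By the Main Lemma (Lemma~\ref{t:main}) it suffices to produce a constant~$L$ such that $\chi^{(n-\ell,\ell)}\bigl[\wh{\eta_k}\bigr] \neq 0$ for every $L \le \ell \le n/2$. Recall that $\wh{\eta_k} = (3,5,7,\ldots,2k-1)$, whose parts sum to $n = k^2-1$. The Frobenius formula for two-row characters converts the question into a pair of skew evaluations:
$$\chi^{(n-\ell,\ell)}\bigl[\wh{\eta_k}\bigr] \. = \. \chi^{(n-\ell)\circ(\ell)}\bigl[\wh{\eta_k}\bigr] \. - \. \chi^{(n-\ell+1)\circ(\ell-1)}\bigl[\wh{\eta_k}\bigr].$$

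Next, I evaluate each skew character via the Murnaghan--Nakayama rule for skew shapes. Since $(a)\circ(b)$ is a disjoint union of two horizontal rows, every rim hook in a rim hook tableau of that shape must lie entirely inside one of the two rows, and so it is a horizontal strip with leg length $0$ and sign $+1$. Because the parts of $\wh{\eta_k}$ are distinct, the rim hook tableaux of shape $(a)\circ(b)$ and weight $\wh{\eta_k}$ are in bijection with subsets $S \subseteq R := \{3,5,7,\ldots,2k-1\}$ satisfying $\sum_{s\in S} s = b$ (the complement then automatically sums to $a = n - b$). Hence $\chi^{(a)\circ(b)}\bigl[\wh{\eta_k}\bigr] = \pd_R(b)$, and substituting back gives
$$\chi^{(n-\ell,\ell)}\bigl[\wh{\eta_k}\bigr] \. = \. \pd_R(\ell) \. - \. \pd_R(\ell-1).$$

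To close, I would apply Theorem~\ref{t:pdp-fin} with $a = 3$, $m = 2$. The top degree is $N = \sum_{i=1}^{k-1}(2i+1) = k^2 - 1 = n$, so $\lfloor N/2\rfloor = \lfloor n/2\rfloor$, and the theorem yields a constant $L = L(3,2)$ for which $\pd_R(\ell) > \pd_R(\ell-1) > 0$ whenever $L+1 \le \ell \le \lfloor n/2\rfloor$. Absorbing the $+1$ into $L$, the character $\chi^{(n-\ell,\ell)}\bigl[\wh{\eta_k}\bigr]$ is strictly positive throughout $L \le \ell \le n/2$, and the Main Lemma then delivers $(n-\ell,\ell) \in \Ups(\eta_k)$. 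The only non-routine step in the whole plan is the Murnaghan--Nakayama bookkeeping on the disconnected skew shape $(a)\circ(b)$ — namely, verifying that rim hooks cannot bridge the two disjoint rows and that each horizontal rim hook carries sign $+1$ — both of which are immediate from the definitions; everything else is a direct combination of the Frobenius formula, Theorem~\ref{t:pdp-fin}, and Lemma~\ref{t:main}.
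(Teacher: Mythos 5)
Your proposal is correct and follows essentially the same route as the paper's own proof: the Frobenius formula, the skew Murnaghan--Nakayama evaluation giving $\chi^{(n-m)\circ(m)}[\wh\eta_k]=\pd_R(m)$ with $R=\{3,5,\ldots,2k-1\}$, then Theorem~\ref{t:pdp-fin} to get $\pd_R(\ell)-\pd_R(\ell-1)>0$ for $L\le\ell\le n/2$, and finally the Main Lemma. The only difference is that you spell out the bookkeeping (rim hooks cannot bridge the two rows, signs are all $+1$, distinct parts give the bijection with $\pd_R$) that the paper leaves implicit.
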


This immediately gives:

\begin{cor}\label{c:eta-two}
For $k$ large enough, we have $(n-\ell,\ell) \in \Ups(\eta_k)$,
for all \. $0\le \ell \le n/2$.
\end{cor}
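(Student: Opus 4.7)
The plan is to combine Lemma~\ref{l:eta-two} (which handles the bulk range $L \le \ell \le n/2$) with the small-$\ell$ tools established earlier in Section~\ref{s:known}, and verify that the two ranges overlap once $k$ is large enough.

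First, I would invoke Lemma~\ref{l:eta-two} directly to get $(n-\ell,\ell) \in \Ups(\eta_k)$ for every $\ell$ with $L \le \ell \le n/2$, where $L = L(5,2)$ is the universal constant produced by Theorem~\ref{t:pdp-fin}. This already covers everything except finitely many small values of $\ell$.

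Next, for the leftover small range, I would apply Theorem~\ref{t:BO} to $\mu = \eta_k = (k^{k-1},k-1)$. Since $\ell(\eta_k) = k-1$ and $\mu_1 = k$, the hypothesis $2 \le p \le \min\{\ell(\mu), (1+\mu_1)/2\}$ simplifies (for $k \ge 3$) to $2 \le p \le (k+1)/2$, and Theorem~\ref{t:BO} yields $(n-p,p) \in \Ups(\eta_k)$ on that interval. For the very smallest cases $\ell = 0, 1$, I would appeal to Corollary~\ref{c:small}: the partition $\eta_k$ is self-conjugate and is not a square, so $(n), (n-1,1), (n-2,2) \in \Ups(\eta_k)$ are all given for free.

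Finally, I would observe that for $k$ sufficiently large we have $(k+1)/2 \ge L$, so the intervals $[0,2] \cup [2,(k+1)/2] \cup [L,n/2]$ together cover $[0,n/2]$. The main (and really only) conceptual obstacle is just bookkeeping: making sure the Ballantine--Orellana range reaches up to the constant $L$ of Lemma~\ref{l:eta-two}, which is automatic since $L$ depends only on the arithmetic progression $(5,2)$ and not on $k$, while $(k+1)/2 \to \infty$. No additional character computations are needed.
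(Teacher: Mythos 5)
Your proposal is correct and takes essentially the same route as the paper: Lemma~\ref{l:eta-two} for the range $L\le \ell\le n/2$, Theorem~\ref{t:BO} for small $\ell$, the lowest values handled separately (the paper uses Lemma~\ref{l:basic} and symmetry for $\ell=0$, you use Corollary~\ref{c:small}), and the observation that the ranges overlap once $k$ is large since $L$ is independent of~$k$. One trivial slip: $\ell(\eta_k)=k$, not $k-1$, but this does not change $\min\{\ell(\mu),(1+\mu_1)/2\}=(k+1)/2$, so the argument stands.
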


\begin{proof}[Proof of Corollary~\ref{c:eta-two}]
By Lemma~\ref{l:basic} and the symmetry of Kronecker coefficients, we have $\ell=0$ case.
By Theorem~\ref{t:BO}, we have the result for $\ell \le k/2$. Finally,
the lemma gives $L \le \ell \le n/2$ case.  Taking $k\ge 2\ts L$, completes the proof.
\end{proof}

\begin{proof}[Proof of Lemma~\ref{l:eta-two}] \.
Recall the Frobenius formula
$$
\chi^{(n-\ell,\ell)} \. = \. \chi^{(n-\ell)\circ (\ell)} \. - \. \chi^{(n-\ell+1)\circ (\ell-1)}.
$$
By the Murnaghan--Nakayama rule for skew shapes, we have:
$$
\chi^{(n-m)\circ (m)}[\wh\eta_k]\. =\. \pd_R(m)\ts,
$$
where $R=\{3,5,\ldots,2k-1\}$.  Therefore, for $L \le \ell \le n/2$, by
Theorem~\ref{t:pdp-fin} we have
$$
\chi^{n-\ell,\ell}[\wh \eta_k] \. = \. \pd_R(\ell)\. - \. \pd_R(\ell-1)\ts > \ts 0\ts.
$$
Now the Main Lemma implies the result.  \end{proof}

\medskip  \subsection{Staircase shape}  \. Let $\rho_k=(k,{k-1},\ldots, 1)\vdash n$,
$n=k(k+1)/2$, be as above.

\begin{lemma} \label{l:rho-two}
There exists a constant~$L$, such that for all $L<\ell \le n/2$
we have $(n-\ell,\ell) \in \Ups(\rho_k)$.
\end{lemma}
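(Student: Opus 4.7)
The plan is to follow the argument of Lemma~\ref{l:eta-two} verbatim, with the chopped square $\eta_k$ replaced by the staircase $\rho_k$. Since $\rho_k$ is self-conjugate, the Main Lemma (Lemma~\ref{t:main}) reduces the claim to establishing
\[
\chi^{(n-\ell,\ell)}[\wh\rho_k] \. > \. 0 \quad \text{for all } L < \ell \le n/2,
\]
after which the Frobenius formula expresses this character as the difference $\chi^{(n-\ell)\circ(\ell)}[\wh\rho_k] - \chi^{(n-\ell+1)\circ(\ell-1)}[\wh\rho_k]$.

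The next step is to evaluate each skew character by the Murnaghan--Nakayama rule. For any $m$, the shape $(n-m)\circ(m)$ is a disjoint union of two single rows; a rim hook placed in such a shape must be a horizontal strip contained in one of the rows, and therefore contributes sign $+1$. Since the parts of $\wh\rho_k = (2k-1, 2k-5, \ldots)$ are pairwise distinct, the count collapses to
\[
\chi^{(n-m)\circ(m)}[\wh\rho_k] \. = \. \pd_R(m),
\]
where $R$ is the set of parts of $\wh\rho_k$. Two cases arise according to the parity of $k$: for $k = 2s+1$ one has $R = \{1,5,9,\ldots,4s+1\} = R(1,4,s)$, whereas for $k = 2s$ one has $R = \{3,7,11,\ldots,4s-1\} = R(3,4,s-1)$. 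Both satisfy $\gcd(a,m) = 1$, and in both cases a direct computation gives $N = \sum_{h \in R} h = n$.

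Theorem~\ref{t:pdp-fin} then supplies a threshold $L = L(a,m)$ for which $\pd_R(\ell) > \pd_R(\ell-1)$ whenever $L < \ell \le \lfloor n/2 \rfloor$, and taking the maximum of the two constants for $(a,m) = (1,4)$ and $(a,m) = (3,4)$ yields a single $L$ that is uniform in $k$. Substituting back gives
\[
\chi^{(n-\ell,\ell)}[\wh\rho_k] \. = \. \pd_R(\ell) - \pd_R(\ell-1) \. > \. 0 \quad \text{for } L < \ell \le n/2,
\]
and the Main Lemma concludes. There is no serious obstacle in this proof; the only point worth emphasizing is that the Odlyzko--Richmond constant $L(a,m)$ depends solely on the arithmetic parameters of the progression and not on its length, which is precisely how Theorem~\ref{t:pdp-fin} is stated, so the same $L$ works for all sufficiently large~$k$ in both parities.
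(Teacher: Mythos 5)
Your proof is correct and follows essentially the same route as the paper: the paper's proof of this lemma consists precisely of the remark that the two parity cases follow verbatim the argument for the chopped square (Frobenius formula, Murnaghan--Nakayama on the two-row skew shapes giving $\pd_R(\ell)-\pd_R(\ell-1)$ with $R=\wh\rho_k$, then Theorem~\ref{t:pdp-fin} and the Main Lemma). Your identification of $R$ as $R(1,4,s)$ or $R(3,4,s-1)$ depending on parity, and the observation that the Odlyzko--Richmond constant depends only on $(a,m)$, correctly fill in the details the paper omits.
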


There are two cases to consider: even~$k$ and odd~$k$.  Each case
follows verbatim the proof of Lemma~\ref{l:eta-two}.  We omit the
details.  Combined with Theorem~\ref{t:BO}, this immediately gives:

\begin{cor}
For $k$ large enough, we have $(n-\ell,\ell) \in \Ups(\rho_k)$,
for all \. $0\le \ell \le n/2$.
\end{cor}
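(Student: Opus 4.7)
The plan is to mirror the proof of Corollary~\ref{c:eta-two}: partition the interval $[0, \lfloor n/2\rfloor]$ into overlapping subranges and dispatch each with a different tool, using the freedom to take $k$ large to close any residual gap. The endpoint $\ell = 0$ corresponds to $(n-\ell,\ell)=(n)$ and follows from Lemma~\ref{l:basic}, since $\langle \chi^{(n)}, \chi^{\rho_k}\otimes\chi^{\rho_k}\rangle = \langle \chi^{\rho_k}, \chi^{\rho_k}\rangle = 1$. The case $\ell = 1$, which Theorem~\ref{t:BO} does not cover (it requires $p \ge 2$), follows instead from Corollary~\ref{c:small}, since $\rho_k = \rho_k'$ and $\rho_k$ is not a square partition for $k\ge 2$.

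For the small-$\ell$ range $2 \le \ell \le (k+1)/2$ I would apply Theorem~\ref{t:BO} with $\mu=\rho_k$. Since $\ell(\rho_k)=(\rho_k)_1=k$, the bound $p \le \min\{\ell(\mu),(1+\mu_1)/2\}$ collapses to $p \le (k+1)/2$, handling the entire range in a single invocation. The complementary range $L < \ell \le n/2$ is precisely the content of Lemma~\ref{l:rho-two}, which ultimately rests on the Main Lemma together with the Odlyzko--Richmond unimodality bound of Theorem~\ref{t:pdp-fin}. This is the only substantive ingredient, and it has already been established.

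The remaining bookkeeping is the observation that the constant $L$ of Lemma~\ref{l:rho-two} does not depend on $k$, whereas the upper endpoint $(k+1)/2$ of the Theorem~\ref{t:BO} range grows without bound. Hence for $k\ge 2L+1$ the subranges $\{0,1\}$, $[2,(k+1)/2]$, and $(L, n/2]$ jointly cover $[0,\lfloor n/2\rfloor]$, completing the proof. I expect no substantive obstacle: the real work has already been packed into Lemma~\ref{l:rho-two}, and what remains is an interval-covering argument essentially identical to the hook analogue (Corollary~\ref{c:ups-hook}) and the two-row case for the chopped square (Corollary~\ref{c:eta-two}).
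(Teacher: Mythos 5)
Your proposal is correct and follows essentially the same route as the paper: the paper also derives this corollary by combining Theorem~\ref{t:BO} (covering $\ell$ up to roughly $(k+1)/2$) with Lemma~\ref{l:rho-two} (covering $L<\ell\le n/2$, with $L$ independent of $k$ by Theorem~\ref{t:pdp-fin}), exactly as in Corollary~\ref{c:eta-two}. Your explicit handling of $\ell=0$ via Lemma~\ref{l:basic} and of $\ell=1$ via Corollary~\ref{c:small} (since Theorem~\ref{t:BO} needs $p\ge 2$) is a slightly more careful bookkeeping of the same argument.
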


\medskip

\subsection{Caret shape}  \. Let $\mon_k=(3k-1,3k-3\ldots,2^2,1^2)\vdash n$,
$n=3\ts k^2$, be the caret shape defined above.

\begin{lemma} \label{l:ups-two}
There exists a constant~$L$, such that $(n-\ell,\ell) \in \Ups(\mon_k)$,
for all $\ell = 0,1$~{\rm mod}~$3$, $L\le \ell \le n/2$.
\end{lemma}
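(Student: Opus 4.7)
The plan is to follow the template of Lemma~\ref{l:eta-two} (and Lemma~\ref{l:rho-two}). By the Main Lemma, it suffices to show $\chi^{(n-\ell,\ell)}[\wh{\mon_k}] \ne 0$ for all admissible $\ell$ in the stated range. I would apply the Frobenius formula
$$
\chi^{(n-\ell,\ell)}[\wh{\mon_k}] \ = \ \chi^{(n-\ell)\circ(\ell)}[\wh{\mon_k}] \. - \. \chi^{(n-\ell+1)\circ(\ell-1)}[\wh{\mon_k}]
$$
and then evaluate each skew character via the Murnaghan--Nakayama rule on the disconnected shape $(n-m)\circ(m)$.

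The crucial arithmetic feature is that every part of $\wh{\mon_k} = (6k-3, 6k-9, \ldots, 9, 3)$ is divisible by~$3$, and after dividing by~$3$ one recovers the set $R = \{1, 3, 5, \ldots, 2k-1\}$. Since $(n-m)\circ(m)$ is a pair of disjoint single rows, every rim hook is a horizontal strip sitting entirely inside one row and contributing sign~$+1$. Thus $\chi^{(n-m)\circ(m)}[\wh{\mon_k}]$ counts subsets of the parts of $\wh{\mon_k}$ that sum to~$m$; it vanishes unless $3 \mid m$, and otherwise equals $\pd_R(m/3)$.

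Splitting on $\ell \bmod 3$: for $\ell \equiv 0 \pmod 3$ the second Frobenius term vanishes and the character equals $+\ts\pd_R(\ell/3)$, while for $\ell \equiv 1 \pmod 3$ the first term vanishes and the character equals $-\ts\pd_R((\ell-1)/3)$. In both cases there is no cancellation between Frobenius terms, so it suffices to show positivity of $\pd_R$ at the relevant argument. Theorem~\ref{t:pdp-fin} applied to $R = R(1,2,k-1)$ (with $\gcd(1,2)=1$ and $N=k^2$) yields a universal constant $L'$ such that $\pd_R(j) > 0$ throughout $L' \le j \le \lfloor k^2/2 \rfloor$; taking $L = 3L' + 1$ then covers the desired range $L \le \ell \le n/2 = 3k^2/2$. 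The Main Lemma concludes.

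The only real subtlety, and the reason for the residue restriction in the lemma, is that the mod-$3$ structure forces both Frobenius terms to vanish identically when $\ell \equiv 2 \pmod 3$, so the Main Lemma offers no information for that residue class. This is a genuine obstacle that would require a different character-evaluation strategy to overcome; everything else in the proof is a verbatim transcription of the argument used for the chopped square shape.
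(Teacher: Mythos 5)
Your proposal is correct and follows essentially the same route as the paper: Frobenius formula, Murnaghan--Nakayama on the disconnected two-row skew shapes, the observation that divisibility by~$3$ of all parts of $\wh{\mon_k}$ kills exactly one of the two terms when $\ell\equiv 0,1 \pmod 3$ (and both when $\ell\equiv 2$), reduction to positivity of $\pd_S$ for $S=\{1,3,\ldots,2k-1\}$ via Theorem~\ref{t:pdp-fin}, and then the Main Lemma. Your write-up is in fact slightly more explicit than the paper's (which case vanishes for each residue, and the computation $N=k^2$), but there is no substantive difference.
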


Again, combined with Theorem~\ref{t:BO}, this immediately gives:

\begin{cor}\label{c:ups-two}
For $k$ large enough, we have $(n-\ell,\ell) \in \Ups(\mon_k)$,
for all \. $0\le \ell \le n/2$, $\ell =0,1$~{\rm mod}~$3$.
\end{cor}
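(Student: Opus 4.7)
The plan is to follow the pattern of the proof of Lemma~\ref{l:eta-two}, with the crucial difference that the principal hooks of $\mon_k$ are all divisible by~$3$. By the Main Lemma it suffices to show that $\chi^{(n-\ell,\ell)}\bigl[\wh\mon_k\bigr]\ne 0$ for $L\le \ell \le n/2$ with $\ell\equiv 0,1\pmod{3}$. Recall $\wh\mon_k=(6k-3,6k-9,\ldots,9,3)$, and apply the Frobenius formula
$$
\chi^{(n-\ell,\ell)} \. = \. \chi^{(n-\ell)\circ(\ell)} \. - \. \chi^{(n-\ell+1)\circ(\ell-1)}.
$$

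First I would evaluate the skew character $\chi^{(n-m)\circ(m)}\bigl[\wh\mon_k\bigr]$. Since the shape $(n-m)\circ(m)$ consists of two disjoint single rows, every rim hook in a Murnaghan--Nakayama tableau must lie entirely within one of the two rows (so all signs are positive), and the character simply counts partitions of the multiset $\wh\mon_k$ into two parts, one summing to~$m$ and the other to~$n-m$. Because every part of $\wh\mon_k$ is divisible by~$3$, this count is zero unless $m\equiv 0\pmod{3}$; when $m=3m'$, dividing each part by~$3$ identifies it with
$$
\chi^{(n-m)\circ(m)}\bigl[\wh\mon_k\bigr] \. = \. \pd_R(m')\ts,
$$
where $R=\{1,3,5,\ldots,2k-1\}$ (a finite arithmetic progression with $a=1$, common difference $2$, and full sum $N=k^2=n/3$).

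Combining the two terms of the Frobenius formula, exactly one of $\ell$ and $\ell-1$ is divisible by~$3$ when $\ell\equiv 0,1 \pmod{3}$, and both are nondivisible when $\ell\equiv 2\pmod 3$. Hence
$$
\chi^{(n-\ell,\ell)}\bigl[\wh\mon_k\bigr] \. = \.
\begin{cases}
\phantom{-}\pd_R(\ell/3) & \text{if } \ell\equiv 0\pmod 3,\\[2pt]
-\pd_R((\ell-1)/3) & \text{if } \ell\equiv 1\pmod 3,\\[2pt]
\phantom{-}0 & \text{if } \ell\equiv 2\pmod 3.
\end{cases}
$$
Theorem~\ref{t:pdp-fin}, applied to $R$ with $a=1$, $m=2$, yields a constant $L_0=L(1,2)$ such that $\pd_R(j)>0$ for $L_0\le j<N/2=k^2/2$. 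Taking $L=3\ts L_0$ and noting that the condition $\lfloor\ell/3\rfloor < k^2/2$ is equivalent to $\ell<n/2$, we conclude that the character is nonzero in the stated range, and the Main Lemma delivers $(n-\ell,\ell)\in\Ups(\mon_k)$.

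The argument is essentially routine once the divisibility-by-$3$ structure of $\wh\mon_k$ is isolated; there is no genuine obstacle. The only subtlety worth watching is that the Odlyzko--Richmond bound must be applied with the correct endpoint: after the division-by-three, the relevant partition function lives on $R=\{1,3,\ldots,2k-1\}$ with total weight~$k^2$, so the lower threshold $L$ does not depend on~$k$, while the upper threshold scales with~$n$, which is exactly what is needed to cover the full interval $L\le \ell \le n/2$.
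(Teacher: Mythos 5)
Your character computation is correct and reproduces the paper's proof of Lemma~\ref{l:ups-two}: since every part of $\wh\mon_k$ is divisible by~$3$, each skew character $\chi^{(n-m)\circ(m)}[\wh\mon_k]$ counts sub-multisets of $\wh\mon_k$ summing to~$m$, which vanishes unless $3\mid m$ and otherwise equals $\pd_R(m/3)$ with $R=\{1,3,\ldots,2k-1\}$; the Frobenius formula then leaves a single signed term $\pm\ts\pd_R(\lfloor \ell/3\rfloor)$ (or $0$ when $\ell\equiv 2$), and Theorem~\ref{t:pdp-fin} plus the Main Lemma give $(n-\ell,\ell)\in\Ups(\mon_k)$ for $L\le\ell\le n/2$, $\ell\equiv 0,1\pmod 3$.

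However, this only proves the Lemma, not the Corollary as stated. The Corollary claims the result for \emph{all} $0\le\ell\le n/2$ with $\ell\equiv 0,1\pmod 3$, and your argument says nothing about the range $0\le\ell<L$. This gap cannot be closed by the same method: the Odlyzko--Richmond threshold $L=L(1,2)$ is a genuine positive constant, and character values of this kind can actually vanish for small $\ell$ (compare Example~\ref{ex:const}, where the analogous alternating sum is zero at $\ell=21$), so the Main Lemma is simply silent there. The paper closes this range by a different tool: Theorem~\ref{t:BO} (the Ballantine--Orellana Kronecker tableaux result) gives $(n-p,p)\in\Ups(\mu)$ for $2\le p\le\min\{\ell(\mu),\frac{1+\mu_1}{2}\}$, which for $\mu=\mon_k$ covers $2\le\ell\le\frac{3k}{2}$, exceeding the constant $L$ once $k$ is large; the cases $\ell=0,1$ are handled by Lemma~\ref{l:basic} and Corollary~\ref{c:small} (using that $\mon_k$ is self-conjugate and not a square). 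You need to add this low-$\ell$ ingredient, and then choose $k$ large enough that the two ranges overlap, to obtain the Corollary.
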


\begin{proof}[Proof of Lemma~\ref{l:ups-two}] \. Recall the Frobenius formula
$$
\chi^{(n-\ell,\ell)} \. = \. \chi^{(n-\ell)\circ (\ell)} \. - \. \chi^{(n-\ell+1)\circ (\ell-1)}.
$$
By the Murnaghan--Nakayama rule for skew shapes, each of the characters
on the right is equal to the number of partitions $\pd_R(\ell)-\pd_R(\ell-1)$
into distinct parts in $R=\{3, 9, \ldots, 6k-3\}$.
Since only one of $\{\ell,\ell-1\}$ is divisible by~3, we conclude that
$\chi^{n-\ell,\ell}[\mon_k]$ is equal to either $\pd_R(\ell)$ or $-\pd_R(\ell-1)$.
Therefore,
$$
\chi^{n-\ell,\ell}[\mon_k] \. = \. \pm \ts \pd_S\left(\left\lfloor \frac{\ell-1}{3}\right\rfloor\right),
\quad \text{where} \ \. S=\{1,3,\ldots,2k-1\}\ts.
$$
Now Theorem~\ref{t:pdp-fin} and the Main Lemma imply the result.  \end{proof}

\bigskip


\section{Variations on the theme}\label{s:var}

\subsection{Near--hooks} \. Let $\la = (n-\ell-m,m,1^\ell)$, which we call
the \emph{near--hook}, for small $m\ge 2$.
We summarize the results in the following theorem.

\begin{thm}[Near--hooks in staircase shapes] \label{t:near-hook}
There is a constant $L>0$, such that for all $\ell, k \ge L$, $\ell < n/2-5$,
$n=k(k+1)/2$, we have:
$$
(n-\ell-2,2,1^\ell), \ldots, (n-\ell-10,10,1^\ell) \. \in \ts \Ups(\rho_k)
$$
\end{thm}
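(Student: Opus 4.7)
By the Main Lemma (Lemma~\ref{t:main}), it suffices to show that
$$
\chi^{(n-\ell-m,\.m,\.1^\ell)}[\wh\rho_k] \ \neq \ 0
$$
for every $m\in\{2,3,\ldots,10\}$ and every $\ell,k\ge L$ with $\ell<n/2-5$. Each near--hook $\la=(n-\ell-m,m,1^\ell)$ has Durfee size~$2$, so the Giambelli formula of Section~\ref{s:char} applies and expresses the character as a difference of two skew characters:
$$
\chi^{(n-\ell-m,\.m,\.1^\ell)} \ = \ \chi^{(n-\ell-m,\.1^{\ell+1})\.\circ\.(m-1)} \ - \ \chi^{(n-\ell-m)\.\circ\.(m-1,\.1^{\ell+1})}.
$$
I would evaluate both skew characters on $\wh\rho_k$ by iterating the Murnaghan--Nakayama rule for skew shapes, exactly as was done for hooks and two--row shapes in Sections~\ref{s:hook} and~\ref{s:two}.

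\textbf{Skew evaluation.} In the skew Murnaghan--Nakayama tableaux, each rim hook of size $h$ prescribed by $\wh\rho_k$ is placed in either the large hook-shaped component (contributing at most three alternatives, along the arm, along the leg, or straddling the corner, exactly as in the proofs of Lemmas~\ref{l:rho-hook} and~\ref{l:rho-two}), or, when $h\leq m-1$, inside the small component of $m-1$ boxes; the combinatorics of the latter is controlled by the residue of~$h$ modulo~$4$, which by the structure of $\wh\rho_k$ depends only on the parity of~$k$. Since $m\le 10$, only the one or two smallest parts of $\wh\rho_k$ can ever be absorbed by the small component, which keeps the bookkeeping finite.

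\textbf{Reduction and conclusion.} Collecting all tableaux and signs, for each pair $(m,\,k\bmod 2)$ the value $\chi^\la[\wh\rho_k]$ becomes a signed $\zz$-linear combination of terms of the form $\pd_R(\ell-c_j)$, where $R$ is $\{5,9,\ldots,2k-1\}$ or $\{7,11,\ldots,2k-1\}$ (possibly with the smallest one or two parts removed), and the shifts $c_j$ are non-negative integers bounded by a constant depending only on $m$, at most~$5$ for $m\le 10$. By Theorem~\ref{t:pdp-fin}, $\pd_R(\cdot)$ is strictly increasing on $[L,N/2]$, so the leading term (the one with the smallest $c_j$, equipped with nonzero coefficient) strictly dominates the corrections provided $\ell\ge L$ and $\ell+c_{\max}<N/2$. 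The latter inequality is the source of the restriction $\ell<n/2-5$, and the nonvanishing of $\chi^\la[\wh\rho_k]$ then follows.

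\textbf{Main obstacle.} The principal difficulty is the combinatorial bookkeeping in the skew Murnaghan--Nakayama step, which has to be carried out separately for each of the nine values of~$m$ and each of the two parities of~$k$. One must track not merely the leading $\pd_R$ term but also all of the correction terms, since cancellations can occur at small inputs -- for instance, the identity $\pd_{5,2}(21)-\pd_{5,2}(20)+\pd_{5,2}(19)=0$ of Example~\ref{ex:const} -- which forces the universal constant~$L$ in the statement to be taken large enough that the Odlyzko--Richmond monotonicity uniformly overpowers the corrections in every case.
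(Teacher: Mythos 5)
Your proposal follows essentially the same route as the paper's own proof: the identical Giambelli decomposition $\chi^{(n-\ell-m,m,1^\ell)}=\chi^{(n-\ell-m,1^{\ell+1})\circ(m-1)}-\chi^{(n-\ell-m)\circ(m-1,1^{\ell+1})}$, evaluation of both skew characters at $\wh\rho_k$ via the skew Murnaghan--Nakayama rule split by the parity of~$k$, reduction to signed combinations of $\pd_R$ values with $R$ an arithmetic progression, and then Theorem~\ref{t:pdp-fin} plus the Main Lemma, which is exactly the paper's sketch. One caution: Theorem~\ref{t:pdp-fin} gives only monotonicity, so a single leading term need not dominate several same-signed corrections; as in the paper's $m=2$ computation, the sign should be forced by grouping terms into differences of the form $\pd_R(j)-\pd_R(j-1)$ (each of known sign) rather than by naive leading-term domination.
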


\begin{proof}[Sketch of proof]
For $\la = (n-\ell-2,2,1^\ell)$, use Giambelli's formula to obtain
$$\chi^{(n-\ell-2,2,1^\ell)} \. = \. \chi^{(n-\ell-2,1^{\ell+1}) \circ (1)} \. - \.
\chi^{(n-\ell-2) \circ (1^{\ell+2})}\ts.
$$
In the odd case, when the first summand of the right side of the equation
is evaluated at $[\wh \rho_k]$ using the Murnaghan--Nakayama rule,
part $(1)$ is placed uniquely, and there are 5 choices for the
5-rim hook.  Thus, the first character evaluated at~$\wh\rho_k$, is equal to
$$
\pd_R(\ell+1)-\pd_R(\ell)+\pd_R(\ell-1)-\pd_R(\ell-2)+\pd_R(\ell-3)\ts,
$$
where $R=\{9,13,\ldots,2k-1\}$.
Similarly, the second character evaluated at~$\wh\rho_k$, is equal to
$$
\pd_R(\ell+2)+\pd_R(\ell+1)+\pd_R(\ell-3)+\pd_R(\ell-4)\ts,
$$
depending on the placement of parts~$(1)$ and~$(5)$.
For the difference, we have

$$\aligned
& -\pd_R(\ell+2)-\pd_R(\ell) +\pd_R(\ell-1)-\pd_R(\ell-2)-\pd_R(\ell-4) \\
& \. = \.
-\bigl[\pd_R(\ell)-\pd_R(\ell-1)\bigr] - \bigl[\pd_R(\ell+2) + \pd_R(\ell-2)+ \pd_R(\ell-4)\bigr] \. < 0\ts,
\endaligned
$$
for $L\le \ell \le n/2-1$, by Theorem~\ref{t:pdp-fin}.  Now the Main Lemma
implies the odd $k$ case.
In the even~$k$ case, there is no part~$(1)$ and the first character is zero;
by Theorem~\ref{t:pdp-fin} the second is positive for~$\ell$ as above, and the proof follows.

For $\la = (n-\ell-3,3,1^\ell)$, use Giambelli's formula to obtain
$$\chi^{(n-\ell-3,3,1^\ell)} \. = \. \chi^{(n-\ell-3,1^{\ell+1}) \circ (2)} \. - \.
\chi^{(n-\ell-3) \circ (2,1^{\ell+1})}\ts.
$$
Since there no part of size~$(2)$, the first character evaluated at~$\wh\rho_k$,
is equal to zero. For the second character, in the odd~$k$ case,
there is a unique way to place 1-hook in the upper left corner of
$(n-\ell-3)$, and then 5-hook in $(2,1^{\ell+1})$, which gives
$$\chi^{(n-\ell-3) \circ (2,1^{\ell+1})} \. = \. -\pd_R(\ell-2)\ts, \quad
\text{where} \ \. R=\{9,13, \ldots\}.
$$
Similarly, in the even~$k$ case, there is a unique way to remove 3-hook
which gives
$$\chi^{(n-\ell-3) \circ (2,1^{\ell+1})} \. = \.-\pd_R(\ell)\ts, \quad
\text{where} \ \. R=\{7,11, \ldots\}.
$$
The rest of the proof follows verbatim.

For $\la = (n-\ell-4,4,1^\ell)$, there is no part~$(3)$ in the odd~$k$,
and the role of odd/even~$k$ are interchanged.  The details are straightforward.
Other results are similar as well. \end{proof}

\begin{rem}{\rm
This sequence of results can be continued for a while, with computations
of multiplicities of $\la = (n-\ell-m,m,1^\ell)$ becoming more complicated
as $m$ grows.  Beyond some point, the naive
estimates as above no longer apply and we need
stronger estimates on the numbers of partitions.
Finally, the characters in cases of chopped square and
the caret shapes can be analyzed in a similar way.
We omit them for brevity.
}\end{rem}

\medskip
\subsection{Near two rows} \.
Let $\la = (n-\ell-m,\ell,m)$, which we call
the \emph{near two rows}, for small $m\ge 1$.
We summarize the results in the following theorem.

\begin{thm}[Near two rows in staircase shapes]  \label{t:near-two}
There is a constant $L>0$, such that for all \ts $\ell \ge L$,
$n=k(k+1)/2$,
we have $(n-\ell-m,\ell,m)  \in \Ups(\rho_k)$ \\
1) \ for $m=1,\ts 5,\ts 7, \ts 8, \ts 9$, \\
2) \ for $m=2,\ts 4$, and $k$~even,\\
3) \ for $m=3$, and $k$~odd.
\end{thm}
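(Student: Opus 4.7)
The plan is to apply the Main Lemma and reduce the theorem to showing
$$\chi^{(n-\ell-m,\ell,m)}[\wh\rho_k] \. \ne \. 0$$
for $\ell \ge L$ and the listed combinations of $m$ and parity of~$k$. Recall that $\wh\rho_k = (2k-1, 2k-5, 2k-9, \ldots)$ is an arithmetic progression with common difference~$4$, so all its parts are $\equiv 1 \pmod 4$ for odd~$k$ and $\equiv 3 \pmod 4$ for even~$k$. In particular, the smallest part of $\wh\rho_k$ is~$1$ for odd~$k$ and~$3$ for even~$k$, which is the source of the parity-dependent behaviour in the statement.

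To compute the character value, first expand $\chi^{(n-\ell-m,\ell,m)}$ by a Giambelli-type formula, following the template of the proof of Theorem~\ref{t:near-hook}. For $m = 1, 2$ the shape has Durfee size~$2$ and the formula stated in Section~\ref{s:char} writes $\chi^{(n-\ell-m,\ell,m)}$ as a difference of two skew characters $\chi^{H_1 \circ H_2}$, each a disjoint diagonal union of two hooks. For $m \in \{3,4,5,7,8,9\}$ the Durfee size is~$3$, and we invoke the $3 \times 3$ Giambelli / Jacobi--Trudi determinantal identity, producing a signed sum over $\sigma \in S_3$ of (up to) six skew characters $\chi^{H_1 \circ H_2 \circ H_3}$, obtained by matching the arms $a_i$ with legs $b_{\sigma(i)}$ of the three principal hooks of $(n-\ell-m, \ell, m)$.

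Next, evaluate each skew character at $\wh\rho_k$ by the Murnaghan--Nakayama rule. Since each piece $H_j$ is a hook, a rim hook placed inside $H_j$ is constrained to the arm (contributing sign $+1$), to the leg (contributing sign $(-1)^{\text{size}-1}$), or to wrapping the corner. Tracking how the parts of~$\wh\rho_k$ distribute among the hooks and which orientation they take, the enumeration collapses to a bounded signed sum
$$\chi^{(n-\ell-m,\ell,m)}[\wh\rho_k] \. = \. \sum_i \. \epsilon_i \. \pd_R(\ell + c_i)\ts,$$
where $R = \{5, 9, \ldots, 2k-1\}$ for odd~$k$, $R = \{7, 11, \ldots, 2k-1\}$ for even~$k$, and the signs $\epsilon_i$ and integer shifts $c_i$ depend only on~$m$ and the parity of~$k$. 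For the listed $(m, k \bmod 2)$ combinations, this signed sum has a non-cancelling dominant term, so Theorem~\ref{t:pdp-fin} (Odlyzko--Richmond) gives that it is nonzero for all $L \le \ell < n/2$, and the Main Lemma concludes.

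The central technical obstacle is the parity bookkeeping. For each listed $(m, k \bmod 2)$ one must (a)~carry out the Giambelli expansion, (b)~identify which of the (up to six) summands vanish on parity grounds at $\wh\rho_k$ (because they would require a rim hook of size $\not\equiv 2k-1 \pmod 4$), and (c)~track the signed rim-hook enumeration in the surviving pieces to exhibit a non-telescoping leading $\pd_R$ term. The cases $m = 8, 9$ produce the largest number of surviving summands and will require the most careful sign tracking. The excluded combinations, namely $m = 2,4$ for odd~$k$ and $m = 3$ for even~$k$, are precisely those where the procedure either leaves a single summand that vanishes by parity, or yields cancellations that wipe out the leading $\pd_R$ term. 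Throughout, the structure is entirely parallel to the proof sketch of Theorem~\ref{t:near-hook}, and each case reduces after a short bookkeeping argument to an invocation of Theorem~\ref{t:pdp-fin}.
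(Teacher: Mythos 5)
Your overall framework (Main Lemma, a determinantal expansion, skew Murnaghan--Nakayama, then Theorem~\ref{t:pdp-fin}) matches the paper's, but at the decisive point your route diverges and leaves a genuine gap. For $m\ge 3$ the paper does not use the $3\times 3$ hook Giambelli determinant; it uses the three-row Jacobi--Trudi (Frobenius) expansion, writing $\chi^{(n-\ell-m,\ell,m)}$ as a signed sum of six characters of disjoint unions of three \emph{rows}. That choice is what makes the evaluation collapse: inside a row every rim hook is horizontal (sign $+1$), and a row of bounded length $m\le 9$ can only be tiled by parts of $\wh\rho_k$ summing exactly to it, so the congruence structure of $\wh\rho_k$ (all parts $\equiv 1$ or $3 \bmod 4$, smallest part $1$ or $3$ according to the parity of~$k$) kills all but one of the three differences, and the character becomes a single two-term expression $\pm\bigl(\pd_R(\ell+c)-\pd_R(\ell+c')\bigr)$ to which Theorem~\ref{t:pdp-fin} applies immediately. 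With your hook-Giambelli expansion, rim hooks can wrap corners of the hook pieces, so each of the six determinant terms contributes a signed sum of several $\pd_R$-values with nontrivial signs, and nothing forces the total to reduce to a visibly non-cancelling expression.

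Consequently your sentence ``for the listed combinations this signed sum has a non-cancelling dominant term'' is exactly the content of the theorem, and it is never established for even one case: the items (a)--(c) you defer as ``the central technical obstacle'' \emph{are} the proof. (The paper's sketch, by contrast, carries out $m=1$ and $m=3$ explicitly and indicates how the remaining cases follow.) Smaller signs that the bookkeeping was not checked: for $m=1$ and even $k$ the relevant progression is $R=\{3,7,\ldots,2k-1\}$, not $\{7,11,\ldots,2k-1\}$, since no short row or hook forces the part $3$; and even in the Durfee-size-$2$ cases the principal hooks have legs $2$ and $0$, so rim-hook signs must be tracked (this is the ``global change of sign'' the paper notes for $m=2$). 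To repair the argument, either switch to the row expansion as the paper does, or carry out the hook-Giambelli enumeration explicitly for each listed pair $(m,\ k\bmod 2)$ and show the resulting alternating sum of $\pd_R$-values is settled by Theorem~\ref{t:pdp-fin}, pairing consecutive terms as in the proof of Theorem~\ref{t:near-hook}.
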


\begin{proof}[Sketch of proof]
For $\la = (n-\ell-1,\ell,1)$, use Giambelli's formula to obtain
$$\chi^{(n-\ell-1,\ell,1)} \. = \. \chi^{(n-\ell-1,1^{2}) \circ (\ell-1)} \. - \.
\chi^{(\ell-1,1^2) \circ (n-\ell-1)}\ts.
$$

Again, use skew Murnaghan--Nakayama rule to evaluate the characters at~$\wh \rho_k$,
where the parts are ordered in decreasing order.  Note that each skew partition is
a composition of a hook and a row.  For odd~$k$, part~1 is placed uniquely,
into a single row, and regardless how the rim hook
fits the foot of the hook, it will have positive sign.

We conclude that the difference of character values is equal to
$$ \pd_R(\ell-2) \. - \. \pd_R(\ell+1)\ts, \quad \text{where} \ \ts R=\{5,9,\ldots,2k-1\}, \ k\. - \. \text{odd},
 $$
 $$\pd_R(\ell-1) \. - \. \pd_R(\ell+1)\ts, \quad \text{where} \ \ts R=\{3,7,\ldots,2k-1\}, \ k\. - \. \text{even}.
 $$
Thus, by Theorem~\ref{t:pdp-fin} , in both cases
we have the difference $<0$ for $\ell$ as above.  Use the Main Lemma to obtain $m=1$ case.

The $m=2$ case is similar, with a global change of sign as one of the hooks has even
height. We omit the easy details.

\smallskip

For $\la = (n-\ell-3,\ell,3)$, use Frobenius formula\footnote{In
the context of Schur functions, Frobenius formula is
often called the Jacobi-Trudi identity~\cite{Mac}.}
for $3$ rows~\cite{Sag,Sta}:
$$
\aligned
\chi^{(n-\ell-3,\ell,3)} \. & = \.
\chi^{(n-\ell-3)\circ (\ell) \circ (3)} \. - \. \chi^{(n-\ell-2)\circ (\ell-1) \circ (3)} \\
& - \. \chi^{(n-\ell-3)\circ (\ell+1) \circ (2)} \. + \. \chi^{(n-\ell-1)\circ (\ell-1) \circ (2)} \\
& + \. \chi^{(n-\ell-2)\circ (\ell+1) \circ (1)} \. - \. \chi^{(n-\ell-1)\circ (\ell) \circ (1)}\ts.
\endaligned
$$
Let $k$ be odd.  Evaluated at $\rho_k$, only the last difference is nonzero, giving
$\pd_R(\ell+1)-\pd_R(\ell)$, $R=\{5,9,\ldots\}$, which is positive for large enough~$\ell$.
Similarly, for even~$k$, only the first difference is nonzero, giving
$\pd_R(\ell)-\pd_R(\ell-1)$ which is positive for large enough~$\ell$.
Other cases $m \le 9$ re similar.  We omit the details.
\end{proof}

\begin{rem}{\rm In the theorem we omit three cases (odd $m=2, \ts 4$ and even $m=3$),
since the are no skew rim hook tableaux in that case.
This implies that the corresponding character is zero, and the Main Lemma tells us nothing.  Of course,
one can increase $m$ and/or the number of rows.  Both the Frobenius and the Giambelli formulas become
more involved and it becomes more difficult to compute the characters in terms of partitions into
distinct parts in arithmetic progressions. }
\end{rem}

\bigskip


\section{Random characters}\label{s:rand}

\subsection{Caret shape} \label{ss:rand-caret}
\. We start with the following
curious result.

\begin{prop}\label{p:rand-ups}
Let $n=3k^2$ and $\la \in P_n$ be a random partition.  Then
$\chi^\la[\wh\mon_k]=0$ w.h.p., as $n\to \infty$.
\end{prop}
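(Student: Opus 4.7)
The plan is to reduce the non-vanishing of $\chi^\la[\wh\mon_k]$ to a divisibility/core condition and then apply the asymptotic enumeration from $\S$\ref{ss:part-as}. Observe that by construction $\wh\mon_k=(6k-3,6k-9,\ldots,9,3)$, so every part of this partition is divisible by~$3$. In the Murnaghan--Nakayama rule, $\chi^\la[\wh\mon_k]$ is a signed sum over rim hook tableaux of shape~$\la$ and weight~$\wh\mon_k$, and in each such tableau every rim hook has size divisible by~$3$. Consequently, if $\chi^\la[\wh\mon_k]\neq 0$, then $\la$ admits at least one tower of $3$-rim hook removals stripping $[\la]$ entirely, which is equivalent to the $3$-core of~$\la$ being empty.

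Therefore the probability that a uniform random $\la\vdash n$ satisfies $\chi^\la[\wh\mon_k]\neq 0$ is at most
$$
\frac{\pi_3(k^2)}{\pi(3k^2)}\ts,
$$
using $n=3k^2$, in the notation of $\S$\ref{ss:part-as}. The next step is to invoke the two asymptotic formulas cited there. For the denominator,
$$
\pi(3k^2)\ts\sim\ts \frac{e^{c\sqrt{3}\,k}}{12\sqrt{3}\ts k^2}\ts,
$$
and for the numerator, specializing the formula for $\pi_k(n)$ at $k=3$ and $n=k^2$ gives
$$
\pi_3(k^2)\ts\sim\ts C\cdot\frac{e^{c\sqrt{3}\,k}}{k^{3}}\ts,
$$
for an explicit constant~$C$ (whose exact value is not needed). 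The exponential growth rates match, since both partition counts are governed by $\exp(c\sqrt{n})=\exp(c\sqrt{3}\,k)$, but the polynomial prefactors differ: the ratio is of order $1/k$.

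Putting this together,
$$
\Pr\bigl[\chi^\la[\wh\mon_k]\neq 0\bigr]\ts\le\ts \frac{\pi_3(k^2)}{\pi(3k^2)}\ts=\ts O\!\left(\frac{1}{k}\right)\ts\to\ts 0\quad\text{as }n\to\infty,
$$
which proves the proposition. The only real content of the argument is the core-vanishing observation; everything else is a bookkeeping comparison of the two asymptotic expansions from $\S$\ref{ss:part-as}. There is no serious obstacle, though one should verify the prefactor in the $\pi_k$ asymptotic at $k=3$ carefully, as this is where the polynomial decay $1/k$ originates and it is the decisive feature, not the leading exponential term.
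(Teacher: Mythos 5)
Your proof is correct and follows essentially the same route as the paper: all parts of $\wh\mon_k$ are divisible by $3$, so a nonzero character value forces an empty $3$-core, and the probability of that is bounded by the ratio of the two partition counts whose asymptotics are quoted in $\S$\ref{ss:part-as}, giving $O(1/k)=O(1/\sqrt{n})$. Your writing of the ratio as $\pi_3(k^2)/\pi(3k^2)$ is in fact the cleaner reading of the formula the paper states, and the prefactor bookkeeping you carry out matches the paper's conclusion.
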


\begin{proof}
Since all parts of $\wh\mon_k$ are divisible by~3, we have
$\chi^\la[\wh\mon_k]=0$ unless $\la$ has an empty $3$-core.
By the asymptotics given in $\S$\ref{ss:part-as},
the probability of that is
$$\frac{\pi_3(n)}{\pi(3n)} \. = \. O\left(\frac{1}{\sqrt{n}}\right),
$$
as desired.
\end{proof}

\begin{rem}{\rm
The proposition states that almost all character values are zero
in this case.  This suggests that either Conjecture~\ref{conj:ups}
is false for large~$k$, or the Main Lemma is too weak
when it comes to this conjecture even for a constant
fraction of the partitions.
}
\end{rem}

\smallskip

\subsection{Staircase shape} \. Keeping in mind Proposition~\ref{cor:ds-exp}
and Proposition~\ref{p:php-exp}, we conjecture that almost all characters
are not equal to zero:

\begin{conj}\label{conj:rand-rho}
Let $n=k(k+1)/2$ and $\la \in P_n$ be a random partition.  Then
$\chi^\la[\wh\rho_k]\ne 0$ w.h.p., as $n\to \infty$.
\end{conj}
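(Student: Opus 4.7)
The plan is to combine the limit-shape description of a random partition $\la \in P_n$ with a stage-by-stage analysis of the Murnaghan--Nakayama expansion of $\chi^\la[\wh\rho_k]$. In contrast with the caret case of Proposition~\ref{p:rand-ups} --- where every part of $\wh\mon_k$ is divisible by~$3$, forcing the character to vanish on every $\la$ with nonempty $3$-core --- the parts of $\wh\rho_k = (2k-1,2k-5,2k-9,\ldots)$ share no common divisor greater than~$1$, so no single-core obstruction kills a positive fraction of partitions. Accordingly, the informal expectation is that $\chi^\la[\wh\rho_k]\ne 0$ for a \emph{generic}~$\la$, and the task is to make this precise.

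First, I would control the shape of a random $\la \vdash n$ using the Vershik--Kerov--Logan--Shepp result recalled in $\S$\ref{ss:part-limit}: the scaled diagram $[\la]/\sqrt{n}$ is w.h.p. close to the limit curve, $\la_1, \ell(\la) = c^{-1}\sqrt{n}\bigl(\log n + O(1)\bigr)$, and $\ds(\la) = \Theta(\sqrt{n})$. In particular, w.h.p. there is a positive density of removable cells of every small hook size around the rim of $[\la]$, and the largest rim hook needed in the recursion --- of size $2k-1 = O(\sqrt{n})$ --- fits inside $[\la]$ in many positions. Next, I would run the Murnaghan--Nakayama recursion on $\la$ removing hooks in decreasing order, and argue inductively that w.h.p. the residual skew shape at stage~$j$ still admits many removable rim hooks of size $2k-1-4j$. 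If this can be made uniform across all $O(\sqrt{n})$ stages, a union bound yields that the number of rim-hook tableaux of shape~$\la$ and weight $\wh\rho_k$ is large w.h.p.; a separate sign argument --- for example, an involutive pairing on ``swappable'' pairs of tableaux leaving an odd core of unpaired ones --- would then rule out exact cancellation.

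The hard part will be sustaining the inductive step under the conditional distribution: after removing a rim hook, the residual skew shape is no longer a uniform random partition, and its local statistics depend on prior choices. One would need quantitative, rim-level refinements of the limit-shape theorem (of the kind developed in~\cite{Fri} for extreme parts) to control the rim structure of the successive diagrams in the recursion, and in particular to maintain a positive density of removable hooks of each required size throughout. Equally delicate is the sign step, since --- unlike the self-conjugate case handled by Lemma~\ref{l:large-durfee}, where the Murnaghan--Nakayama sum reduces to a single term of size $\pm 1$ --- a generic $\la$ can admit exponentially many rim-hook tableaux, and one must rule out their signed sum being accidentally zero. Absent a cleaner combinatorial identity --- perhaps a Jacobi--Trudi or Giambelli-type expansion in which all contributing terms have matching signs --- this cancellation question is the principal bottleneck, and it is the reason we state the result only as a conjecture.
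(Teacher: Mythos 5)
This statement is one the paper does not prove: it appears only as Conjecture~\ref{conj:rand-rho}, motivated informally by Proposition~\ref{cor:ds-exp} and Proposition~\ref{p:php-exp}, and the closest the paper comes to an argument is the admittedly heuristic discussion surrounding Conjecture~\ref{conj:rand-sc-char} and Proposition~\ref{prop:rand-sc-char}. So there is no proof in the paper to compare yours against, and your own text correctly ends by conceding that the result remains a conjecture. Judged as a proof attempt, your proposal has two genuine gaps, which you name yourself but do not close, and they are exactly where the difficulty of the conjecture lives. First, the inductive step: after the first rim hook of size $2k-1$ is removed, the residual skew shape is no longer distributed like a uniform random partition, and its conditional law depends on all prior choices; you invoke hoped-for ``rim-level refinements of the limit-shape theorem,'' but no such statement is formulated, and the union bound over the $\Theta(k)=\Theta(\sqrt{n})$ stages would require per-stage failure probabilities $o(1/\sqrt{n})$ that nothing in $\S$\ref{ss:part-limit} or~\cite{Fri} supplies. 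This is essentially the same obstruction the paper flags for its own heuristic: the ``good probability'' of unique (or controlled) hook removal is hard to estimate and could degrade as the recursion proceeds.

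Second, and more fundamentally, the sign problem: showing that the set $\RH(\la,\wh\rho_k)$ is large w.h.p.\ does not show that the signed sum in the Murnaghan--Nakayama rule is nonzero. Your proposed fix --- an involutive pairing on ``swappable'' tableaux leaving an odd number unpaired --- is not constructed, and there is no indication of what the involution would be or why the unpaired core would have constant sign. The paper's own examples show this is not a technicality: $\chi^{(5,1)}[\wh\rho_3]=0$ even though $(5,1)\in\Ups(\rho_3)$, and Example~\ref{ex:const} exhibits exact cancellation $\pd_{5,2}(21)-\pd_{5,2}(20)+\pd_{5,2}(19)=0$ in precisely the kind of alternating partition-function expressions your recursion would produce. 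So the vanishing you must exclude does occur for specific shapes, and ruling it out for all but a $o(1)$ fraction of $\la$ requires a quantitative anti-cancellation input (e.g.\ dominance of one sign class, or a positivity identity) that neither you nor the paper currently has. In short, your outline is a reasonable heuristic, consistent in spirit with the paper's discussion in Section~\ref{s:rand}, but it is not a proof, and the two bottlenecks you identify are the open content of the conjecture rather than steps that can be waved through.
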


\smallskip

\subsection{Random shapes} \. Note that the random partitions
$\la\vdash n$ are (approximately) self-conjugate and by the
limit shape results (see~$\S$\ref{ss:part-limit}), have
Durfee size
$$\ds(\la) \. \sim \.
\frac{(\ln 2)\sqrt{6\ts n}}{\pi} \. \approx \. 0.54\ts \sqrt{n}\ts.
$$
This raises the
question that perhaps random self-conjugate partitions
satisfy the tensor product conjecture.

\begin{op}\label{op:rand-limit}
Let $\mu \vdash n$ be a random self-conjugate partition of~$n$.
Prove or disprove: \ts $\Ups(\mu) = P_n$ w.h.p., as $n\to \infty$.
\end{op}

The following result is a partial evidence in support of the positive
solution of the problem.

\begin{thm} Let $\mu \vdash n$ be a random self-conjugate partition of~$n$.
Then $\Ups(\mu)$ contains all hooks $(n-\ell,1^\ell)$, $0\le \ell < n$, w.h.p.,
as $n\to \infty$.
\end{thm}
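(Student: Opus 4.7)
The plan is to invoke the Main Lemma (Lemma~\ref{t:main}) on hook characters, halve the range of $\ell$ via the conjugation symmetry from Proposition~\ref{p:sign}, and handle the remaining range by a dichotomy between a direct combinatorial criterion (for small $\ell$) and a character calculation (for the middle). Applied to $\la = (n-\ell,1^\ell)$, whose conjugate $\la' = (\ell+1,1^{n-\ell-1})$ is again a hook, Proposition~\ref{p:sign} reduces the claim to verifying $(n-\ell,1^\ell)\in\Ups(\mu)$ for $0\le\ell\le(n-1)/2$. Tracking rim hook tableaux of hook shapes in the Murnaghan--Nakayama rule exactly as in the proof of Lemma~\ref{l:eta-hook}, with the smallest principal hook $h_s$ now playing the role of the $3$-hook there, yields the closed form
$$\chi^{(n-\ell,1^\ell)}[\wh\mu] \, = \, \sum_{b=0}^{h_s-1}(-1)^b\ts\pd_H(\ell-b) \, = \, [x^\ell]\,\frac{\prod_{i=1}^s(1+x^{h_i})}{1+x}\ts,$$
where $\wh\mu=(h_1>\cdots>h_s)$ are the principal hooks of $\mu$ (all distinct and odd since $\mu=\mu'$), $H=\wh\mu\setminus\{h_s\}$, and $\pd_H(k)$ counts subsets of $H$ summing to $k$. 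By the Main Lemma, any $\ell$ for which this expression is nonzero automatically gives $(n-\ell,1^\ell)\in\Ups(\mu)$.

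For small $\ell$, Theorem~\ref{t:Bla} disposes of the hooks directly without touching the character: letting $r=r(\mu)$ be the largest index with $\mu_1>\mu_2>\cdots>\mu_r$, it gives $(n-\ell,1^\ell)\in\Ups(\mu)$ for $0\le\ell<r$, and a standard limit-shape computation for random self-conjugate partitions forces $r\to\infty$ w.h.p. For the middle range $r\le\ell\le(n-1)/2$, I would use the character formula and pair consecutive terms of the alternating sum as
$$\bigl(\pd_H(\ell)-\pd_H(\ell-1)\bigr)+\bigl(\pd_H(\ell-2)-\pd_H(\ell-3)\bigr)+\cdots+\pd_H(\ell-h_s+1),$$
the last unpaired term being positive because $h_s$ is odd. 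Provided $\pd_H$ is strictly increasing on the window $[\ell-h_s+1,\ell]$ and $\pd_H(\ell-h_s+1)>0$, every pair is nonnegative and the sum is strictly positive.

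The main obstacle is establishing this uniform strict monotonicity of $\pd_H$ on the middle range for the random set $H$. Theorem~\ref{t:pdp-fin} supplies exactly this when the underlying set is an arithmetic progression, but via the classical bijection $\mu\mapsto\wh\mu$ the set $\wh\mu$ is a uniformly random partition of $n$ into distinct odd parts, which is far from an AP. One must therefore prove a concentration estimate tailored to this ensemble: with high probability $h_s=O(\log n)$, so the monotonicity window is only polylogarithmically long, and it suffices to show that each increment $\pd_H(\ell)-\pd_H(\ell-1)$ dominates the alternating tail throughout the middle range. Such a bound can plausibly be obtained by adapting the Fristedt--Vershik probabilistic framework for random partitions, exploiting that $s=\ds(\mu)=\Theta(\sqrt{n})$ provides ample flexibility in representing each integer as a subset sum. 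A union bound over the $O(n)$ remaining values of $\ell$ then finishes the proof.
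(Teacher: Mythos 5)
Your outline follows the paper's strategy almost exactly: Murnaghan--Nakayama on the hook shape with the smallest principal hook occupying the corner, giving the alternating sum $\sum_{b=0}^{h_s-1}(-1)^b\ts\pd_H(\ell-b)$ (this formula is correct and is the same one the paper uses), Theorem~\ref{t:Bla} for small~$\ell$, the Main Lemma to convert nonvanishing into membership in $\Ups(\mu)$, and a monotonicity statement for $\pd_H$ in the middle range. The halving via Proposition~\ref{p:sign} is harmless but unnecessary, since the conjugate of a hook is a hook.

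The genuine gap is the step you yourself flag as ``the main obstacle'': strict monotonicity (in fact, positivity plus unimodality away from the ends) of $\pd_H$ for the random set $H$ of distinct odd principal hooks. You only assert that such an estimate ``can plausibly be obtained by adapting the Fristedt--Vershik probabilistic framework,'' which leaves the engine of the proof unproven; without it the pairing argument gives nothing. The paper closes exactly this step not by a new concentration argument but by invoking the \emph{general} theorem of Odlyzko and Richmond~\cite{OR}: Theorem~\ref{t:pdp-fin} as stated is only the arithmetic-progression special case, while the result of~\cite{OR} applies to much more general part sets and yields a threshold $L$ beyond which $\pd_R(r)$ is positive and monotone up to the middle. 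Its hypotheses hold w.h.p.\ for $\wh\mu$: the parts are distinct odd numbers with $\gcd=1$, the smallest part is $O(1)$, and the number of parts is $\Omega(\sqrt n)$ (all consequences of the bijection $\mu\leftrightarrow\wh\mu$ and the limit shape of partitions into distinct odd parts). So the missing idea is precisely that the known unimodality theorem already covers the random ensemble, once these w.h.p.\ structural facts are recorded; absent that (or a genuinely new proof of the monotonicity window, which you do not supply), your argument does not go through. A secondary point to watch, present in both your sketch and the paper's, is that the small-$\ell$ range covered by Theorem~\ref{t:Bla} must actually overlap the threshold coming from the monotonicity statement, which requires saying how that threshold depends on the random set $H$.
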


\begin{proof}[Sketch of proof]
We follow the proof of Lemma~\ref{l:eta-hook}. First, recall that self-conjugate
partitions are in natural bijection with partitions into distinct odd parts:
$\mu \leftrightarrow \wh\mu$ (see e.g.~\cite{Pak}).  This implies that
$\ell(\wh\mu) = \Omega(\sqrt{n})$ and the smallest part
$s = \wh\mu_{\ds}=O(1)$~w.h.p., where $\ds=\ds(\mu)$.  Now observe that
the $\gcd(\mu_1,\ldots,\mu_{s})=1$ w.h.p.
Then, by~\cite{OR}, there exists an integer $L$ such that $\pd_R(r)$ are
positive and monotone for $L < r < n-L$, where $R=\{\mu_1,\ldots,\mu_s\}$.
Now, the Murnaghan--Nakayama rule implies that
for $L< \ell n-L$, there are $s=s(\mu)$ ways to remove the smallest part, giving
$$
\chi^{(n-\ell,1^\ell)}[\wh\mu] \. = \.
\pd_R(\ell) - \pd_R(\ell-1) + \pd_R(\ell-2) - \ldots + \pd_R(\ell-s) \, > \ts 0.
$$
Now the Main Lemma gives the result for $\ell$ as above, and for
small $\ell = O(\sqrt{n})$, the result follows from Theorem~\ref{t:Bla}.
\end{proof}

In case Open Problem~\ref{op:rand-limit} is too strong, here
is a weaker, asymptotic version of this claim.

\begin{op}\label{op:rand-limit-approx}
Let $\mu \vdash n$ be a uniformly
 random self-conjugate partition of~$n$.
Prove or disprove:
$$\frac{\bigl|\Ups(\mu)\bigr|}{\bigl| P_n \bigr|} \, \to \. 1 \quad \text{as} \ \.  n\to \infty\ts.
$$
\end{op}

\smallskip

\subsection{Random characters} \.
We believe the following claim closely related to open problems above.

\begin{conj}\label{conj:rand-sc-char}
Let $\la \vdash n$ be a random partitions of~$n$, and
let $\mu$ be a random self-conjugate partition of~$n$.
Then $\chi^\la[\wh \mu]=0$ w.h.p., as $n\to \infty$.
\end{conj}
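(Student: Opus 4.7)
The plan is to exploit the classical bijection between self-conjugate partitions $\mu$ and partitions $\wh\mu = (h_{11},h_{22},\ldots)$ into distinct odd parts. Under this bijection, a uniformly random self-conjugate $\mu \vdash n$ corresponds to a uniformly random partition $\nu := \wh\mu$ of $n$ into distinct odd parts. The claim thus reduces to showing that for $\la \in P_n$ and such a random $\nu$, drawn independently, one has $\chi^\la[\nu] = 0$ with high probability as $n \to \infty$.

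A first attempt uses the character orthogonality identity
$$
\sum_{\la \vdash n} \chi^\la[\nu]^2 \. = \. z_\nu \. = \. \prod_i \nu_i\ts,
$$
the last equality holding since $\nu$ has distinct parts and hence trivial part-multiplicity stabilizer. Because $\chi^\la[\nu] \in \zz$, this yields $|\{\la : \chi^\la[\nu] \ne 0\}| \le \prod_i \nu_i$, and thus
$$
\Pr\nolimits_\la [\chi^\la[\nu] \ne 0] \. \le \. \frac{\prod_i \nu_i}{\pi(n)}\ts.
$$
Averaging over $\nu$ would conclude the argument, provided $\mathbb E_\nu \bigl[\prod_i \nu_i\bigr] = o(\pi(n))$.

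Unfortunately, for typical random $\nu$ in this ensemble $\log \prod_i \nu_i = \Theta\bigl(\sqrt n \ts \log n\bigr)$, by the known limit shape of random partitions into distinct parts (Vershik), whereas $\log \pi(n) \sim c\sqrt n$; so the second-moment bound is far too weak, and bridging this gap is the main obstacle. I would combine two refinements. First, for each small prime $p$, common $p$-divisibility of many parts of $\nu$ forces $\la$ to have an empty $p$-core (as in the proof of Proposition~\ref{p:rand-ups}), which contributes exponentially vanishing mass and can be sieved out. Second, and more importantly, I would refine the bound on $|\{\la : \chi^\la[\nu] \ne 0\}|$ by tracking the Murnaghan--Nakayama recursion and estimating the branching factor at each of the $\ell(\nu) = O(\sqrt n)$ rim hook removal steps. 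The target is to replace the crude $\prod_i \nu_i$ bound by the much smaller number of \emph{shapes} reachable by rim hook tableaux of weight $\nu$, by exploiting that each rim hook removal corresponds to a bead move on an abacus with strongly constrained positions for typical $\nu$.

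I expect the technical heart to be this branching-factor estimate, showing that for typical $\nu$ the product of branching factors is only subexponential in $\sqrt n$. A clean alternative, parallel to Proposition~\ref{p:rand-ups}, would be to identify a structural event $E(\nu)$ on $\wh\mu$ that occurs with probability $1-o(1)$ and that forces $\chi^\la[\nu] = 0$ for all but a $o(1)$ fraction of $\la \in P_n$; however, unlike the caret shape where all parts are divisible by~$3$, no such divisibility event is automatic here, so the proof appears to require a genuinely probabilistic argument on the joint distribution of $(\la,\mu)$ rather than a deterministic analysis for typical $\mu$.
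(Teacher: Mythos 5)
You should first note that the statement you are attacking is Conjecture~\ref{conj:rand-sc-char}: the paper does not prove it. It offers only a heuristic (compare largest parts: $\wh\mu_1>\wh\la_1$ with probability about $1/2$, which kills the character by the lemma of $\S$\ref{ss:php}, then iterate rim hook removals), and the only rigorous statement it extracts is the much weaker Proposition~\ref{prop:rand-sc-char}, which gives vanishing only for a constant fraction of pairs $(\la,\mu)$ and whose proof is explicitly deferred to another paper. So there is no proof in the paper for your argument to be compared with, and a complete proof would be new.

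As a proof, your proposal has a genuine gap that you yourself flag but do not close. The correct ingredients are the bijection $\mu\leftrightarrow\wh\mu$ with uniform partitions into distinct odd parts and the orthogonality bound $\#\{\la:\chi^\la[\wh\mu]\neq 0\}\le z_{\wh\mu}=\prod_i\wh\mu_i$; but, as you observe, $\log\prod_i\wh\mu_i=\Theta(\sqrt n\log n)$ while $\log\pi(n)=\Theta(\sqrt n)$, so this is useless, and everything then rests on the ``branching factor'' estimate that the number of shapes reachable by rim hook tableaux of weight $\wh\mu$ is $o(\pi(n))$ for typical $\wh\mu$. That estimate is not proved, and it is essentially a restatement of the conjecture in a stronger form: it demands that few shapes admit \emph{any} rim hook tableau of weight $\wh\mu$, whereas the conjecture only needs the signed sum to vanish, so you have discarded the cancellation that the Murnaghan--Nakayama rule provides and must now beat $\pi(n)$ by pure counting. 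No evidence is offered that this counting bound holds (note that for weights with many parts the set of reachable shapes can be exponentially large in $\sqrt n$, cf.\ the constructions in Proposition~\ref{cor:ds-exp}, so the issue is exactly whether it is $e^{o(\sqrt n)}$ or a positive fraction of $P_n$ --- which is the open question). The $p$-core sieve likewise contributes nothing here, since, as you concede, a random partition into distinct odd parts has no common divisibility among its parts, unlike $\wh\mon_k$ in Proposition~\ref{p:rand-ups}. Finally, note that the paper's own heuristic goes through a different mechanism --- the distributional mismatch of the largest principal hooks and a step-by-step uniqueness argument --- and even that route is only claimed to formalize into the constant-probability Proposition~\ref{prop:rand-sc-char}; this is a useful calibration of how far your counting strategy would have to go.
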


Heuristically, the first principal hook $\wh\mu_1$ is greater
than $\wh\la_1$ with probability~$1/2$, and by Lemma~\ref{l:large-durfee}
the above character is zero.  If $\ts\wh\mu_1\le \wh\la_1$,
with good probability the $\wh\mu_1$-rim hook can be removed
in a unique way, after which the process is repeated for smaller
principal hooks, in a manner similar to Proposition~\ref{cor:ds-exp}.
This gives that the probability there exists no rim hook
tableaux $\to 0$ as $n\to \infty$.

\smallskip

Now, the above argument is heuristic and may be hard to formalize,
since the ``good probability'' is rather hard to estimate and in principle
it might be close to~0 and lead to many rim hook tableaux.  However,
sharp bound on the distribution of the largest part of the random
partitions (cf.~$\S$\ref{ss:part-limit}) combined with the first
step of the argument can be formalize to prove the following
result.\footnote{The proof will appear elsewhere.}

\begin{prop}\label{prop:rand-sc-char}
Let $\la,\mu \vdash n$ be as in the Conjecture above.
Then there exists $\ve >0$ such that $\chi^\la[\wh \mu]=0$
with probability $<1-\ve$, as $n\to \infty$.
\end{prop}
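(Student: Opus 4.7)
\textbf{The plan} is to produce an event of positive probability on which $\chi^\la[\wh\mu]\ne 0$, thereby formalizing the outline sketched after Conjecture~\ref{conj:rand-sc-char}.

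First I would obtain sharp distributional control on the two quantities governing the first step of Murnaghan--Nakayama: the largest rim-hook length $\wh\la_1=\la_1+\la'_1-1$ of $\la$, and the largest principal hook $\wh\mu_1=2\mu_1-1$ of the self-conjugate $\mu$.  For random $\la\vdash n$, Fristedt's refinement of the limit-shape theorem (cf.~$\S$\ref{ss:part-limit}) locates $\wh\la_1$ near $2c^{-1}\sqrt{n}\ts\log n$ with Gumbel-type fluctuations at scale $\sqrt{n}$.  Since random self-conjugate partitions are in bijection with partitions into distinct odd parts, the corresponding Fristedt-style conditioned geometric ensemble coming from $\prod_i(1+t^{2i-1})$ admits the same analysis, showing $\wh\mu_1$ has the same scale and center.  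By independence of $\la$ and $\mu$, aggregating over a window of bounded size yields a constant $\delta_1>0$ such that
$$
P\bigl(\wh\la_1 \in \{\wh\mu_1,\ts\wh\mu_1+1,\ldots,\wh\mu_1+C\ts\}\bigr) \ \ge \ \delta_1\ts.
$$

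Next, on this event I would isolate a ``uniqueness'' subevent $E$ of positive conditional probability on which there is exactly one cell $(i,j)\in[\la]$ with hook length $h_{i,j}=\wh\mu_1$, namely $(i,j)=(1,1)$.  When $\wh\la_1=\wh\mu_1$ this is automatic, since the cell realizing the maximal hook length is always unique; when $\wh\la_1$ slightly exceeds $\wh\mu_1$, one uses concentration of the border profile of a random $\la$ to show that with positive conditional probability no competing cell carries hook length $\wh\mu_1$.  On $E$, the Murnaghan--Nakayama rule collapses to a single signed term at the first step, giving
$$
\chi^\la[\wh\mu] \ = \ \pm \ts \chi^{\la\sm h}\bigl[\wh\mu_2,\ldots,\wh\mu_s\bigr]\ts.
$$

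\textbf{The main obstacle} is to guarantee that this reduced character is itself nonzero with positive conditional probability.  Iterating Steps 1--2 through each of the $s=\Theta(\sqrt{n})$ principal hooks loses a constant factor per step and gives vanishing probability, so iteration must be avoided altogether.  The approach I would pursue is a Paley--Zygmund-type lower bound on $\chi^{\la\sm h}[\nu]^2$ with $\nu=(\wh\mu_2,\ldots,\wh\mu_s)$: conditional on $\mu$ and on $E$, orthogonality gives $\sum_{\la'\vdash n-\wh\mu_1}\chi^{\la'}[\nu]^2=z_\nu$, which lower-bounds the second moment, while a fourth-moment bound obtained from character factorization identities would then yield $P(\chi^{\la'}[\nu]\ne 0)\ge\varepsilon'$.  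Verifying that the conditional law of $\la\sm h$ on $E$ is close enough to uniform for this scheme to apply, and controlling $E[\chi^4]$ quantitatively, are the principal technical hurdles, and presumably this is what the deferred proof in the footnoted reference accomplishes.
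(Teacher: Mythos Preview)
You have inverted the target. As printed, the inequality reads $P\bigl(\chi^\la[\wh\mu]=0\bigr)<1-\varepsilon$, but the sentence immediately following the Proposition draws from it that ``a constant fraction of characters are zero,'' i.e.\ $P\bigl(\chi^\la[\wh\mu]=0\bigr)>\varepsilon$, and the sentence immediately preceding it says the result is obtained from ``the first step of the argument'' --- namely, the observation that $\wh\mu_1>\wh\la_1$ forces $\chi^\la[\wh\mu]=0$ (the Lemma in \S\ref{ss:php}). So the intended claim, and the one the paper's sketch actually supports, is that \emph{zero} occurs with probability bounded away from~$0$, not that \emph{nonzero} does. The paper's route is then short: using Fristedt--type asymptotics for $\la_1+\la_1'-1$ and for the largest part of a random partition into distinct odd parts, one shows that $P(\wh\mu_1>\wh\la_1)$ remains bounded below by a positive constant, and on that event the character vanishes. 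There is no recursion through the principal hooks and no second--moment argument.

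Even under your literal reading, Step~1 does not hold. Since $\wh\la_1$ and $\wh\mu_1$ each fluctuate on scale $\Theta(\sqrt{n})$, for any fixed $C$ the event $\wh\la_1\in\{\wh\mu_1,\ldots,\wh\mu_1+C\}$ has probability $O(C/\sqrt{n})\to 0$, not $\ge\delta_1$. Widening the window to $\Theta(\sqrt{n})$ restores constant probability but destroys the uniqueness claim in Step~2, so the single--term collapse of Murnaghan--Nakayama in Step~3 is no longer available. The Paley--Zygmund scheme in Step~4 has a further mismatch: the orthogonality relation $\sum_{\la'}\chi^{\la'}[\nu]^2=z_\nu$ is a sum over all $\la'\vdash n-\wh\mu_1$, whereas you need control under the conditional law of $\la\sm h$ given~$E$, and there is no reason for that law to be close to uniform.
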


This implies our Main Lemma is too weak to
establish even the weaker Open Problem~\ref{op:rand-limit-approx},
since a constant fraction of characters are zero.

\smallskip

\subsection{Character table of the symmetric group} \.
Now, Conjecture~\ref{conj:rand-sc-char}
raises a more simple and natural question about
random entries of the character table of~$S_n$.

\begin{conj}\label{conj:rand-char}
Let $\la, \mu \vdash n$ be random partitions of~$n$.
Then $\chi^\la[\mu]\ne 0$ w.h.p.~as $n\to \infty$.
\end{conj}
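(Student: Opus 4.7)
The plan is to analyze $\chi^\la[\mu]$ via the Murnaghan--Nakayama rule, peeling off the parts of $\mu$ in decreasing order and controlling rim hook placements at each step. By the limit shape results of $\S$\ref{ss:part-limit}, a uniformly random $\mu\vdash n$ satisfies $\mu_1,\.\ell(\mu) = c^{-1}\sqrt n\,(\log n+O(1))$ w.h.p., and the same holds independently for $\la$. In particular, $\mu_1\le\la_1$ and $\ell(\mu)\le\ell(\la)$ hold w.h.p., which is the obvious necessary condition for $\chi^\la[\mu]\neq 0$.

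First I would split the parts of $\mu$ into a \emph{head} (parts exceeding a threshold $T=T(n)\gg\sqrt n$) and a \emph{tail} (the remaining, smaller parts). For the head, I would argue inductively that after removing each large rim hook, the resulting skew shape remains generic enough that the next large part admits only a small, controllable number of valid placements---morally, a long rim hook in a typical shape is pinned down by the boundary geometry, in the spirit of the uniqueness exploited in Lemma~\ref{l:large-durfee} and the iterative constructions of Proposition~\ref{cor:ds-exp}. The desired outcome of this phase is a reduction of $\chi^\la[\mu]$, up to a controlled signed factor, to a skew character $\chi^{\la/\tau}[\mu^{\text{tail}}]$.

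For the tail, I would try to locate a large ``generic'' region in $\la/\tau$ on which the small rim hooks tile essentially freely, and invoke a probabilistic non-vanishing argument for the signed rim hook count there. As a complementary line of attack, one can try to leverage the orthonormality identity $\sum_\la\chi^\la[\mu]^2=z_\mu$: combining it with a suitable concentration statement would force a typical $\chi^\la[\mu]$ to be of magnitude on the order of $\sqrt{z_\mu/\pi(n)}$, and hence nonzero.

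The main obstacle will be controlling the signed cancellations in the Murnaghan--Nakayama sum. Example~\ref{ex:const} and Proposition~\ref{prop:rand-sc-char} already show that exact cancellations occur for an appreciable fraction of inputs, so any purely enumerative approach is doomed. A second moment bound alone is also insufficient, as it cannot exclude a sparse set of very large $|\chi^\la[\mu]|$ dominating the average while most values stay small or zero. What seems to be required is either a higher-moment or anticoncentration argument for the Murnaghan--Nakayama sum, or a structural dichotomy showing that generically either a unique rim hook tableau survives all peeling steps (yielding $\pm 1$), or the signs among the surviving tableaux are independent enough that exact cancellation is an exponentially rare event. Bridging this gap is where the real work lies.
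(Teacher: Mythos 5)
There is a genuine gap, and it is one you yourself concede: the statement you are addressing is Conjecture~\ref{conj:rand-char} of the paper, which the authors do not prove --- they offer only numerical evidence (Soffer's computations that the density of zeroes in the character table of~$S_n$ drifts from about $0.394$ at $n=20$ to $0.359$ at $n=39$) and leave the question open. Your proposal is a research plan rather than a proof: the head/tail peeling phase asserts, without argument, that after removing each large part of~$\mu$ the number of valid rim hook placements stays ``small and controllable'' and that no exact cancellation occurs, but nothing in the paper or in your sketch substantiates this for a uniformly random pair $(\la,\mu)$. Indeed the paper's own Proposition~\ref{prop:rand-sc-char} shows that for a random $\la$ and a random self-conjugate $\mu$ the value $\chi^\la[\wh\mu]$ vanishes with probability bounded away from~$0$, so the ``pinned down by boundary geometry'' heuristic, in the spirit of Lemma~\ref{l:large-durfee} and Proposition~\ref{cor:ds-exp}, cannot by itself yield nonvanishing w.h.p.; some genuinely new anticoncentration input for the signed Murnaghan--Nakayama sum is needed, and you correctly identify this as ``where the real work lies'' without supplying it.

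Two further points. First, your claim that $\mu_1\le\la_1$ and $\ell(\mu)\le\ell(\la)$ hold w.h.p.\ is false: by the results quoted in $\S$\ref{ss:part-limit} both $\mu_1$ and $\la_1$ equal $c^{-1}\sqrt{n}\bigl(\log n+O(1)\bigr)$ with fluctuations of order one on that scale, so each inequality holds only with probability roughly $1/2$; the actual necessary condition for $\chi^\la[\mu]\ne 0$ coming from the largest part is $\mu_1\le \la_1+\ell(\la)-1$ (the largest hook length of~$\la$), which does hold w.h.p.\ but is much weaker than what your head phase needs. Second, the orthogonality identity $\sum_{\la}\chi^\la[\mu]^2=z_\mu$ cannot force typical values to be of size $\sqrt{z_\mu/\pi(n)}$: the sum may be (and is expected to be) dominated by a sparse set of partitions carrying huge character values, which is entirely consistent with a constant, or even overwhelming, fraction of zero entries; as you note, second moments alone cannot distinguish these scenarios. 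So the proposal does not close the conjecture, and no comparison with a paper proof is possible, since none exists.
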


Andrew Soffer's calculations show gradual decrease of the
probability $p(n)$ of zeroes in the character table of~$S_n$,
for $n>20$, from $p(20)\approx 0.394$ to $p(39)\approx 0.359$.
Furthermore, for large partitions the probability that
the character values are small seems to be rapidly decreasing.
For example, $q(20)\approx 0.06275$ and $q(37)\approx 0.020375$,
where $q(n)$ is the probability that the character is equal to~$1$.

\begin{op}\label{op:rand-char-1}
Let $\la, \mu \vdash n$ be random partitions of~$n$.
Find the asymptotic behavior of \,
$p_n  :=  P\bigl(\chi^\la[\mu]= 1\bigr)$.
\end{op}

The data we have suggests that the probability in the
open problem decreases mildly exponentially: $p_n < \exp[-n^\al]$,
for some~$\al>0$.
\medskip

\section{Proof of the Main Lemma}\label{s:main-proof}

Let us first restate the lemma using our notation.

\smallskip

\noindent
{\bf Lemma}~\ref{t:main} (Main Lemma)  \emph{
Let $\la, \mu \vdash n$, such that $\mu=\mu'$ and $\chi^\la[\wh \mu]\ne 0$.
Then \ts $\la \in \Ups(\mu)$.
}

\medskip

\begin{proof}
Recall Lemma~\ref{l:large-durfee}, and let
$$
\epsi \mu \, = \,
\chi^\mu[\wh \mu] \, = \, (-1)^{(n-\ds(\mu))/2}\ts.
$$
Recall also that the $S_n$ conjugacy class of cycle type~$\zeta$, when
$\zeta$ is a partition into distinct odd parts, splits into two
conjugacy classes in the alternating group~$A_n$, which
we denote by $\zeta^1$ and~$\zeta^2$.
There are two kinds of irreducible characters of $A_n$.
For each partition $\nu$ of $n$ such that $\nu=\nu'$ there are
two irreducible characters associated to $\nu$, which we denote
by $\al^{\nu+}$ and $\al^{\nu-}$; and for each  partition $\nu$ of $n$
such that $\nu \neq \nu'$ there is an irreducible character associated
to the pair $\nu$, $\nu'$, which we denote by $\al^\nu$.
These characters are related to irreducible characters of $S_n$ as indicated below.
We will need the following standard results (see e.g.~\cite[Section~2.5]{JK}).

\smallskip
\noindent
{\bf 1.} \. If $\nu \ne \nu'$, then
$$
\res{S_n}{A_n}{\chi^\nu} \, = \, \res{S_n}{A_n}{\irredsym {\nu'}}\, = \,\al^\nu \ts,
$$
is an irreducible character of~$A_n$.

\smallskip

\noindent
{\bf 2.} \. If $\nu=\nu'$, then
$$\res{S_n}{A_n}{\chi^\nu} \, = \, \al^{\nu+} \. + \. \al^{\nu-}\ts,
$$
is the sum of two different irreducible characters of $A_n$.  Moreover,
both characters are conjugate, that is, for any $\sigma\in A_n$ we have
$$
\al^{\nu+}\bigl[ (1\ts 2)\sigma(1\ts 2)\bigr]\, = \, \al^{\nu -}[\sigma].
$$

\smallskip

\noindent
{\bf 3.} \. The characters $\al^\nu$, $\nu \neq\nu'$ and $\al^{\nu+}$, $\al^{\nu-}$,
where $\nu=\nu'$ are all different and form a complete set of irreducible characters
of $A_n$.

\smallskip

\noindent
{\bf 4.} \. If $\nu=\nu'$, and $\gamma$ is a conjugacy class of $A_n$ different from $\wh\nu^1$ or $\wh\nu^2$, then
$$
\al^{\nu+}[\gamma] \, = \, \al^{\nu-}[\gamma] \, = \, \frac{1}{2} \ \chi^\nu[\gamma]\ts.
$$
We also have
$$
\aligned
\al^{\nu +}\bigl[\wh\nu^1\bigr] \, &= \, \al^{\nu-}\bigl[\wh\nu^2\bigr] \, = \, \frac{1}{2}\left( \ve_\nu +  \sqrt{\ve_\nu \prod_i \wh\nu_i} \right), \\
\al^{\nu +}\bigl[\wh\nu^2\bigr] \, &= \, \al^{\nu-}\bigl[\wh\nu^1\bigr] \, = \, \frac{1}{2}\left( \ve_\nu -  \sqrt{\ve_\nu \prod_i \wh\nu_i} \right).
\endaligned
$$
In other words, for any self-conjugate partition $\nu$, the only irreducible characters of $A_n$ that differ on
the classes $\wh\nu^1$ and $\wh\nu^2$ are precisely $\al^{\nu\pm}$.

\smallskip

There are two cases to consider with respect to whether $\la$ is self-conjugate or not.

First, assume that $\la\neq \la'$. Then $\al^\la$ is an irreducible character of~$A_n$.
Since
$$
\al^{\la} [\wh\mu^1] \.  = \. \al^\la[\wh\mu^2] \. = \. \chi^\la [\wh\mu]\ts,
$$
we obtain:
$$
\bigl(\al^{\mu +} \otimes \al^{\la}\bigr) [\wh\mu^1]
- \bigl(\al^{\mu +} \otimes \al^{\la}\bigr) [\wh\mu^2] \.
= \. \bigl(\al^{\mu+} [\wh\mu^1] - \al^{\mu+}[\wh\mu^2]\bigr) \cdot \chi^\la [\wh\mu] $$
$$= \left(\sqrt{\ve_\mu \. \prod_i \wh\mu_i}\right) \cdot \ts \chi^\la(\wh\mu) \. \neq \. 0\ts.
$$
Therefore, either $\al^{\mu+}$ or $\al^{\mu-}$ is a component of
$\al^{\mu+} \otimes \al^{\la}$. In other words,
$$\text{either} \quad \langle \irredalt{\mu+} \otimes \irredalt{\,\la},
\irredalt{\mu+} \rangle \neq 0 \quad \text{ or }\quad \langle \irredalt{\mu+} \otimes \irredalt{\,\la},
\irredalt{\mu-} \rangle \neq 0.$$
We claim that the terms in these product can be interchanged.  Formally, we claim that:
$$(\star) \qquad \text{either} \quad
\langle \al^{\mu+} \otimes \al^{\mu+}, \al^{\la} \rangle \neq 0 \quad
\text{or} \quad \langle \al^{\mu+} \otimes \al^{\mu-}, \al^{\la} \rangle \neq 0.$$
There are two cases.
If $\ve_\mu =1$, then both $\al^{\mu+}$ and $\al^{\mu-}$ take real values. Thus
$$
\langle \al^{\mu+} \otimes \al^{\mu\pm}, \al^{\la} \rangle \.
= \. \langle \al^{\mu+} \otimes \al^{\la}, \irredalt{\mu\pm} \rangle  \. \neq \ts 0\ts,
$$
which implies $(\star)$ in this case.

If $\ve_\mu =-1$, then
$$\im\left( \al^{\mu+}[\wh\mu^{1}]\right) \. = \. -\ts\im\left( \al^{\mu-}[\wh\mu^1]\right)
\quad \text{and} \quad\im\left( \al^{\mu+}[\wh\mu^{2}]\right) \. = \. -\ts\im\left( \al^{\mu-}[\wh\mu^2]\right).
$$
Therefore, $\overline{\al^{\mu+} }= \al^{\mu-}$, since all other character values are real.
Thus,
$$
\langle \al^{\mu+} \otimes \al^{\mu\pm}, \al^{\la} \rangle \.
= \. \langle \al^{\mu+} \otimes \al^{\la}, \al^{\mu\mp} \rangle \. \neq \ts 0,
$$
which implies $(\star)$ in this case.

In summary, we have both cases in~$(\star)$ imply that $\al^{\la}$ is a component of
$\res{S_n}{A_n}{\chi^\mu \otimes \chi^\mu}$.  Therefore, either $\chi^\la$ or
$\chi^{\la'}$ is a component of $\chi^\mu \otimes \chi^\mu$.
Since $\mu = \mu'$, we have, by Proposition~\ref{p:sign},
that $\chi^\la$ and $\chi^{\la'}$ are components of $\chi^\mu \otimes \chi^\mu$,
as desired.  This completes the proof of the $\la\ne \la'$ case.

\smallskip

Now, suppose $\la=\la'$.  The case $\la=\mu$ is given by Theorem~\ref{t:BB}.
If $\la\neq \mu$, then
$$\al^{\la\pm}[\wh\mu^1] \. = \. \al^{\la\pm}[\wh\mu^2] \. = \. \frac{1}{2} \chi^\la (\wh\mu) \neq 0\ts.$$
By a similar argument as above applied to $\ts\la+\ts$ and $\ts\la-\ts$ in place of~$\ts\la$,
we have the following analogue of~$(\star)$~:
$$\aligned
\text{either} \quad \langle \al^{\mu+} \otimes \al^{\mu+}, \al^{\la+} \rangle \.
&= \. \langle \al^{\mu+} \otimes \al^{\mu+}, \al^{\la-} \rangle  \. \neq \. 0\ts, \\
\text{or} \quad \langle \al^{\mu+} \otimes \al^{\mu-}, \al^{\la+} \rangle \.
&= \. \langle \al^{\mu+} \otimes \al^{\mu-}, \al^{\la-} \rangle  \. \neq \. 0\ts.
\endaligned
$$
This implies that  $\ts\al^{\la+}$\ts and  $\ts\al^{\la-}\ts$ are components of
$\ts\res{S_n}{A_n}{\chi^\mu \otimes \chi^\mu}$.  Therefore, $\chi^\la$ is
a component of $\chi^\mu \otimes \chi^\mu$, as desired.  This completes
the proof of the $\la= \la'$ case, and finishes the proof of the lemma.
\end{proof}

\medskip

\noindent

\section{Conclusions and final remarks}\label{s:fin}

\subsection{} \label{ss:fin-exp} \. For the staircase shapes~$\rho_k$,
the number $|\Ups(\rho_k)|$ of irreducible constituents is
exponential by Proposition~\ref{cor:ds-exp}.  From this point of view,
theorems~\ref{t:summary},~\ref{t:near-hook} and~\ref{t:near-two}
barely make a dent: they add $O(k^2)$ additional constituents.  On the
other hand, for the chopped square shape~$\eta_k$ there is no obvious
weakly exponential lower bound.  Although we believe that finding such bound
should not be difficult by an ad hoc construction, $\Omega(k^2)$
is the best bound we currently have.


\subsection{} \label{ss:fin-hooks} \. In~\cite{PaPa}, we obtain an
advance extension of Theorem~\ref{t:Bla}, based again on a combinatorial
interpretation given in~\cite{Bla}.  Among other things, we prove that
$\Phi(\rho_k)$ contain all hooks for \emph{all}~$k$, not just~$k$
large enough.   We should mention that this was independently
proved by Blasiak.\footnote{Personal communication.}


\subsection{} \. There is a curious characterization of positivity
Littlewood--Richardson coefficients of the staircase shape.
Namely, Berenstein and Zelevinsky proved in~\cite{BZ2}
the former \emph{Kostant Conjecture}, which states that
$LR(\rho_k,\rho_k,\la)>0$ if and only if $K(2\rho_k,\la)>0$.
For the proof, they defined \emph{BZ-triangles}, which proved
crucial in~\cite{KT}.


\subsection{} \label{ss:fin-dim} \. A natural question would be to ask
whether the dimensions of $\rho_k$, $\mon_k$ and~$\eta_k$ are large
enough to contain all irreducible representations of~$S_n$.
Same question for the limit shape defined in $\S$\ref{ss:part-limit} discussed
also in Open Problem~\ref{op:rand-limit}.  The answer is yes in all
cases, as can be seen by a direct application of the hook-length and
Stirling formulas.  Given that all these shapes are far from the
Kerov--Vershik shape which has the maximal (and most likely)
dimension~\cite{VK}, this might seem puzzling.
The explanation is that the dimensions are asymptotically greater than
the number of partitions $\pi(n) = \exp \Theta(\sqrt{n})$.

More precisely, it is well known and easy to see that the sum of all dimensions
is equal to $a_n = \#\{\si\in S_n \mid \si^2=1\}$. The sequence $\{a_n\}$ is
A000085 in~\cite{OEIS}, and is equal to $\exp \bigl[\frac{1}{2}n\log n]$,
up to an $\exp O(n)$ factor (see~\cite{Rob}).
Thus, the squares of dimensions of \emph{random} irreducible
representations of~$S_n$ are \emph{much larger} than~$a_n$.
This also suggests a positive answer to Open Problem~\ref{op:rand-limit}.
More relevant to the Tensor Square Conjecture, the dimensions of
$\rho_k$, $\mon_k$ and~$\eta_k$ are also equal to $\exp \bigl[\frac{1}{2}n\log n]$,
up to an $\exp O(n)$ factor, which supports the Saxl Conjecture and
conjectures~\ref{conj:chop},~\ref{conj:ups}.


\subsection{} \label{ss:fin-unim} \.
Although there are very strong results on the monotonicity of the
partition function (see e.g.~\cite{BE}), for partitions into distinct
parts much less is known (see~\cite{OR,RS}).
Curiously, some results follow from Dynkin's result in Lie Theory.
For example, the unimodality of $\pd_R(n)$ for $R=\{1,2,\ldots,k\}$,
i.e.~the unimodality of the coefficients in
$$
\prod_{i=1}^k \ts (1+t^i)\ts,
$$
is called
\emph{Hughes theorem} and corresponds to root system $C_k$.  We refer to~\cite{Bre,Sta-unim} for more
on this approach and general surveys on unimodality.


\subsection{} \label{ss:fin-asympt} \. Constants~$L$ in theorems~\ref{t:near-hook}
and~\ref{t:near-two} should be possible to estimate explicitly, in the same manner
as was done in~\cite{OR}, to reprove the Hughes theorem (see above).
However, because of Example~\ref{ex:const}, there is
no unimodality for small values of~$\ell$, so without advances in the study of
Kronecker coefficients it is unlikely that the gap can be bridged by an explicit
computation.


\subsection{} \label{ss:fin-back} \.
The reason conjectures~\ref{conj:rand-sc-char} and~\ref{conj:rand-char}
are not in direct contradiction has to do with a difference between
limit shapes of random partitions~$\mu$ and random partitions into
distinct odd part~$\wh\mu$ (see~\cite{Ver}).  Using the asymptotics
for the number of the latter, Conjecture~\ref{conj:rand-sc-char}
implies that at least $\exp \Theta(n^{1/4})$ many columns of the
character table $M_n$, most entries are zero.  In a different direction,
the Main Lemma can be reversed to show that at least $\exp \Theta(n^{1/4})$
many rows have most entries zero, by taking partitions with Durfee
squares $O(n^{1/4})$. We omit the details.


\subsection{} \label{ss:fin-tiling} \.
Curiously, and quite coincidentally, the plane can be tiled with copies of
parallel translations and rotations of~$\mon_k$ shapes, and the same is
true also for $\rho_k$ and $\eta_k$ (see Figure~\ref{f:tiling}).

\begin{figure}[hbtp]
   \includegraphics[scale=0.33]{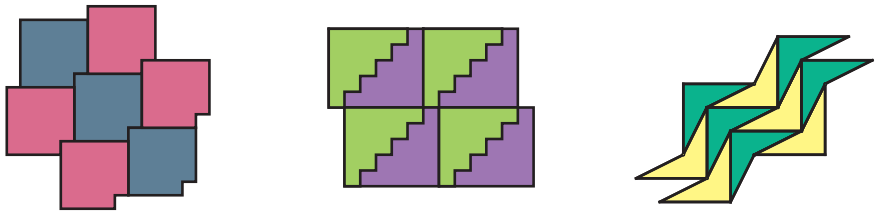}
   \caption{Tiling of the plane with chopped square, staircase and caret shapes.}
   \label{f:tiling}
\end{figure}


\subsection{} \label{ss:fin-back} \.
John Thompson's conjecture states that every finite simple group~$G$ has a
conjugacy class whose square is the whole~$G$.  For~$A_n$, this is a
well known result~\cite{Ber}.  We refer to~\cite{Sha} for a survey
of recent progress towards Thompson's conjecture, and related results.

Passman's problem
is concerned with the conjugation action on conjugacy classes of general
simple groups.  For~$A_n$, it was positively resolved by Heide and
Zalesski in~\cite{HZ}.

\begin{thm}[\cite{HZ}] For every $n\ge 5$ there is a conjugacy class $C_\la$
of~$A_n$, such that the action of~$A_n$ on $C_\la$ by conjugation as a
permutation module, contains every irreducible character as a constituent.
\end{thm}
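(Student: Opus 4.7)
The key reformulation is that the permutation module $M_\la := \cc[C_\la]$ on which $A_n$ acts by conjugation is isomorphic to $\text{Ind}_H^{A_n}\mathbf 1_H$, where $H = C_{A_n}(g)$ is the centralizer of any fixed representative $g\in C_\la$. By Frobenius reciprocity, the multiplicity of an irreducible character $\psi$ of $A_n$ in $M_\la$ equals $\langle\psi|_H,\mathbf 1\rangle_H = \dim V_\psi^H$. Thus the theorem reduces to the following: \emph{find $g\in A_n$ such that every irreducible of $A_n$ has a nonzero $C_{A_n}(g)$-fixed vector}. This is an existential statement, and my plan is to exhibit a conjugacy class whose centralizer is simultaneously as small as possible and combinatorially transparent.

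My first attempt would be to choose $g$ with cycle type $\la$ a partition of $n$ into distinct odd parts. Such a $g$ automatically lies in $A_n$, its $S_n$-centralizer coincides with the direct product of the cyclic groups generated by its disjoint cycles, and $C_{A_n}(g) = C_{S_n}(g)$ since $\langle g\rangle\ssu A_n$. With $H$ of this transparent form, $\dim V_\psi^H$ becomes the character sum $\frac{1}{|H|}\sum_{h\in H}\psi(h)$. For $\psi = \al^\mu$ in the non-self-conjugate case $\mu\ne\mu'$, this is simply a sum of Murnaghan--Nakayama values $\chi^\mu(h)$ over the elements of $H$; and for $\mu=\mu'$, the splitting $\chi^\mu|_{A_n} = \al^{\mu+}+\al^{\mu-}$ reduces the analysis of each summand to the same character sum, corrected by the explicit formulas for $\al^{\mu\pm}(\wh\mu^1)$ and $\al^{\mu\pm}(\wh\mu^2)$ recalled in our proof of the Main Lemma.

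The hard part is to choose $\la$ so that this character sum is strictly positive for \emph{every} irreducible $\psi$. For hook and two-row $\mu$, positivity would follow by counting rim hook tableaux with appropriate sign, in the same spirit as Sections~\ref{s:hook} and~\ref{s:two}, provided $\la$ is chosen so only a controlled number of matching rim hooks can be removed from $[\mu]$. For general $\mu$, the naive sign analysis is insufficient and the main obstacle appears: one needs a global mechanism ensuring positivity across all shapes simultaneously. The two routes I would pursue are (i) an inductive argument on $n$ using the branching $A_{n-1}\ssu A_n$ to lift a good class from $A_{n-1}$, handling the split case by the results of Section~\ref{s:main-proof}; and (ii) adapting the Steinberg-style virtual-character construction of~\cite{HSTZ} to the $A_n$ setting, where an explicit arithmetic identity forces every irreducible to appear as a constituent. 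I expect (ii) to give the most uniform proof, with the delicate step being the identification of an element whose centralizer is ``generic enough'' that the character values on its powers cannot conspire to cancel.
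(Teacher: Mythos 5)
This theorem is not proved in the paper at all: it is quoted verbatim from Heide--Zalesski~\cite{HZ} as background for Passman's problem, so there is no in-paper argument to measure your proposal against, and your text must stand or fall as a self-contained proof of a published research result. As such it has a genuine gap. Your reductions are fine: the conjugation module on $C_\la$ is $\mathrm{Ind}_{H}^{A_n}\mathbf 1$ with $H=C_{A_n}(g)$, Frobenius reciprocity converts the statement into ``every irreducible of $A_n$ has a nonzero $H$-fixed vector,'' and for $g$ of cycle type a partition into distinct odd parts the centralizer is indeed the abelian group generated by the cycles, contained in $A_n$, so the fixed-space dimension is the average $\frac{1}{|H|}\sum_{h\in H}\psi(h)$. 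But the entire content of the theorem is the positivity of this average for \emph{every} irreducible $\psi$ simultaneously, and at exactly that point you stop: the rim-hook counting you invoke only plausibly handles hooks and two-row shapes (mirroring Sections~\ref{s:hook} and~\ref{s:two} of this paper, which themselves need the Odlyzko--Richmond monotonicity theorem and still only cover those families), while for general $\mu$ you concede that ``the naive sign analysis is insufficient'' and offer two speculative routes. Route (i) is not a proof strategy as stated: knowing that every irreducible of $A_{n-1}$ has a $C_{A_{n-1}}(g)$-fixed vector does not control fixed spaces of $C_{A_n}(g')$ on irreducibles of $A_n$, since restriction along $A_{n-1}\ssu A_n$ scrambles constituents and the centralizer changes; route (ii) is an appeal to~\cite{HSTZ}, whose Steinberg-character mechanism has no obvious analogue in $A_n$ (that absence is precisely why the tensor square and Saxl conjectures are open).

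So the proposal is a reasonable program, and its first paragraph is a correct and standard reformulation, but the existential core --- exhibiting a specific class and proving the character-sum positivity for all shapes --- is missing, and neither of the suggested completions is worked out or clearly workable. If you want to pursue this, the honest comparison point is the actual argument in~\cite{HZ}, which handles the general-shape positivity by dedicated estimates rather than by the rim-hook/unimodality technology of this paper; note also that Proposition~\ref{prop:rand-sc-char} here is a warning that character values on classes with few, large parts vanish too often for purely local sign-counting to succeed across all $\mu$.
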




\subsection{} \label{ss:fin-kron}
\. It is known that the problem \textsc{Kron} of computing $g(\la,\mu,\nu)$ is
\textsc{\#P}-hard (see~\cite{BI}).  However, the same also holds for the
Kostka numbers $K_{\la,\mu}$ and the Littlewood--Richardson coefficients
$LR(\la,\mu,\nu)$, so this is not the main obstacle~\cite{Nar}.
The difference is, for the latter there exists several combinatorial
interpretations in the form of counting certain Young tableaux,
which are (relatively) easy to work with (see e.g.~\cite{KT,KTW,PV,Sta}).
Of course, it is not known whether \textsc{Kron} is in~\textsc{\#P},
as this would imply a combinatorial interpretation for the
Kronecker coefficients, but it was shown in~\cite{BI} that
\textsc{Kron}$\ts \in \ts$\textsc{GapP}.

Recall that by the Knutson--Tao theorem~\cite{KT}
(formerly the \emph{saturation conjecture}~\cite{Zel}),
the problem whether $LR(\la,\mu,\nu)=0$ is equivalent to the problem whether
the corresponding LR-polytope is nonempty, and thus can be
solved in polynomial time by linear programming~\cite{MNS}.
Similarly, by the Knutson--Tao--Woodward theorem~\cite{KTW}
(formerly \emph{Fulton's conjecture}), the problem whether
$LR(\la,\mu,\nu)=1$ is equivalent to the problem whether
the corresponding LR-polytope consists of exactly one point,
and thus also can be solved in polynomial time (cf.~\cite{BZ}). Finally,
Narayanan showed that the corresponding problems for Kostka numbers reduce
to LR-coefficients~\cite{Nar}.  Together these results imply that all
four decision problems can be solved in polynomial time.

On the other hand, it is not known whether the corresponding decision
problems for Kronecker coefficients are in~\textsc{P}~\cite{Mul}.
In the words of Peter B\"urgisser~\cite{Bur},
``deciding positivity of Kronecker coefficients [...] is a
major obstacle for proceeding with geometric complexity theory''
of Mulmuley and Sohoni~\cite{MS}.  Special cases of the problem are
resolved in~\cite{PaPa}.  We refer to~\cite{Mul} for the detailed overview
of the role Kronecker coefficients play in this approach
(see also~\cite{BOR,Ike}).

Let us mention that when $n$ is in unary, computing individual character
values $\chi^\la[\mu]$ can be done in Probabilistic Polynomial time~\cite{Hel}.
This gives an $\exp O(\sqrt{n})$ time algorithm for computing Kronecker coefficients,
via the scalar product of characters.  On the other hand, the  Main Lemma now
can be viewed as a ``polynomial witness'' for positivity of Kronecker
coefficients, which, if Conjecture~\ref{conj:rand-rho} holds, works for
most partitions.

\vskip.65cm

{\small

\noindent
{\bf Acknowledgements.} \  We are very grateful to Jan Saxl for telling us
about his conjecture, and to Jonah Blasiak, Matt Boylan, Bob Guralnick,
Stephen DeSalvo, Christian Ikenmeyer, Hari Narayanan, Rosa Orellana,
Sinai Robins, Rapha\"{e}l Rouquier, Pham Huu Tiep and Gjergji Zaimi for
helpful remarks and useful references.  We would like to thank Andrew Soffer
for helpful computer computations.  We would like to credit to {\tt MathOverflow}
for providing a forum where such questions can be
asked.\footnote{Specifically, the references in the answers to this {\tt MathOverflow}
question proved very useful: {\tt http://mathoverflow.net/questions/111507/}}
The first author was partially supported by the BSF and the NSF grants,
the second by the Simons Postdoctoral Fellowship, and the third author
by UNAM-DGAPA grant IN102611-3.
}


\vskip1.1cm


{\footnotesize

}

\end{document}